\newtheorem{theorem}{Theorem}[section]
\newtheorem{proposition}[theorem]{Proposition}
\newtheorem{lemma}[theorem]{Lemma}
\newtheorem{corollary}[theorem]{Corollary}
\newtheorem{observation}[theorem]{Observation}
\newtheorem{definition}[theorem]{Definition}
\newtheorem{conjecture}[theorem]{Conjecture}
\newcommand{\Imin}[0]{I_{{\rm min}}}
\newcommand{\beq}[1]{\begin{equation}\label{#1}}
\newcommand{\enq}[0]{\end{equation}}
\newcommand{\mn}[0]{\medskip\noindent}
\newcommand{\nin}[0]{\noindent}
\newcommand{\sub}[0]{\subseteq}
\newcommand{\sm}[0]{\setminus}
\newcommand{\ra}[0]{\rightarrow}
\newcommand{\A}[0]{{\cal A}}
\newcommand{\B}[0]{{\cal B}}
\newcommand{\f}[0]{{\cal F}}
\newcommand{\C}[0]{{\cal C}}
\newcommand{\I}[0]{{\cal I}}
\newcommand{\J}[0]{{\cal J}}
\newcommand{\ga}[0]{\alpha }
\newcommand{\gl}[0]{\lambda }
\newcommand{\gO}[0]{\Omega}
\newcommand{\gc}[0]{\gamma}
\newcommand{\0}[0]{\emptyset}
\newcommand{\Cor}[0]{\mathrm{Cor}}
\begin{document}

\title{Chv\'{a}tal's Conjecture and Correlation Inequalities}

\author{
Ehud Friedgut\thanks{Faculty of Mathematics and Computer Science, The Weizmann Institute of Science, Rehovot, Israel.
Research supported in part by ISF grant 0398246, and Minerva grant 712023. E-mail:~ehudf@math.huji.ac.il}, 
Jeff Kahn\thanks{Department of Mathematics, Rutgers University, Piscataway NJ 08854 USA.
Partially supported by NSF grants DMS1201337 and DMS1501962, and BSF grant 2014290.
E-mail:~jkahn@math.rutgers.edu}, Gil Kalai\thanks{Einstein Institute of Mathematics, Hebrew University, Jerusalem, Israel.
Partially supported by ERC advanced grant 320924, NSF grant DMS1300120, and BSF grant 2014290.
E-mail:~kalai@math.huji.ac.il}, and
Nathan Keller\thanks{Department of Mathematics, Bar Ilan University, Ramat Gan, Israel. Partially supported by ISF grant 402/13, BSF grant 2014290, and by the Alon Fellowship. E-mail:~nathan.keller27@gmail.com}  \\
}

\maketitle

\begin{abstract}
Chv\'{a}tal's conjecture in extremal combinatorics asserts that for any decreasing
family $\mathcal{F}$ of subsets of a finite set $S$, there is a largest intersecting
subfamily of $\mathcal{F}$ consisting of all members of $\mathcal{F}$
that include a particular $x \in S$. In this paper we reformulate the conjecture in
terms of influences of variables on
Boolean functions and correlation inequalities, and study
special cases and variants 
using tools from discrete Fourier
analysis.
\end{abstract}

\section{Introduction}\label{Intro}

\nin {\em Definitions.}
A family $\cal G$ of subsets of $[n]=\{1,2,\ldots,n\}$ is \emph{intersecting} if
$A \cap B \neq \emptyset$ for
any $A,B \in \cal G$, and \emph{increasing} if $A\supset B\in\cal G$
implies $A\in \cal G$ (and similarly for \emph{decreasing}).

\medskip
One of the seminal results (maybe \emph{the} seminal result)
of
extremal combinatorics is the Erd\H{o}s-Ko-Rado
theorem~\cite{EKR}, which says that, for $k\leq n/2$,
the maximum size of an intersecting subfamily of the family $\f$ of all $k$-subsets
of $[n]$ is $ {{n-1}\choose{k-1}}$, the number of $k$-sets containing some fixed
$x \in [n]$.
Given this, it is natural to ask
whether something similar holds for other $\f$'s.
A celebrated 1972 conjecture of Chv\'{a}tal \cite{Chvatal}
says that this is true for every {\em decreasing} $\f$:

\begin{conjecture}[Chv\'atal's Conjecture]\label{Conj:Chvatal}
For any decreasing
$\mathcal{F}\sub 2^{[n]}$, some largest intersecting
subfamily has the form $\{A\in \f:x\in A\}$.
\end{conjecture}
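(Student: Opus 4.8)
My plan is to recast the conjecture as a correlation inequality and attack it with discrete Fourier analysis. Identify a decreasing family $\f\sub 2^{[n]}$ with its indicator $f\colon\{0,1\}^n\ra\{0,1\}$, a monotone decreasing Boolean function, and for $x\in[n]$ write $\f_x=\{A\in\f:x\in A\}$ and $\f^{\bar x}=\{A\in\f:x\notin A\}$. Counting the edges of the discrete cube in direction $x$ that cross the boundary of $\f$ gives $2^{n-1}I_x(f)=|\f^{\bar x}|-|\f_x|$, where $I_x(f)$ is the influence of the $x$-th variable; together with $|\f_x|+|\f^{\bar x}|=|\f|$ this yields $|\f_x|=\tfrac12\bigl(|\f|-2^{n-1}I_x(f)\bigr)$, so that
\[
\max_{x\in[n]}|\f_x|=\tfrac12\bigl(|\f|-2^{n-1}\Imin(f)\bigr).
\]
Chv\'atal's conjecture is thus equivalent to the statement that \emph{every intersecting ${\cal G}\sub\f$ satisfies $|{\cal G}|\le\tfrac12\bigl(|\f|-2^{n-1}\Imin(f)\bigr)$}, and this is the inequality I would try to establish.

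\medskip\noindent\textbf{Reduction to a sharp correlation bound.}
One may assume ${\cal G}$ is a \emph{maximal} intersecting subfamily of $\f$; such families are ``closed'', i.e.\ $A\in{\cal G}$ whenever $A\in\f$ meets every member of ${\cal G}$. For $B\in{\cal G}$ put ${\cal U}_B=\{A:A\cap B\ne\0\}$, an increasing family, and let ${\cal U}=\bigcap_{B\in{\cal G}}{\cal U}_B$, which is again increasing and, because ${\cal G}$ is intersecting, has $|{\cal U}|\ge 2^{n-1}$. Since every member of ${\cal G}$ meets every member of ${\cal G}$, maximality yields the exact identity ${\cal G}=\f\cap{\cal U}$, so the conjecture becomes a correlation estimate: $|\f\cap{\cal U}|\le\tfrac12\bigl(|\f|-2^{n-1}\Imin(f)\bigr)$ for a decreasing $\f$ and an increasing ${\cal U}$ of this special (``dual to an intersecting family'') form. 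The Harris--Kleitman inequality already gives $|\f\cap{\cal U}|\le 2^{-n}|\f|\,|{\cal U}|$, but this is too weak: it is insensitive to the term $2^{n-1}\Imin(f)$ (for instance, when $\f$ is the family of all sets of size at most $1$ the product bound is $(n+1)/2$ while the target is $1$). The crux should therefore be a strengthening of Harris--Kleitman, adapted to this ${\cal U}$, that improves the product bound by an additive correction of order $2^{n-2}\Imin(f)$; producing such an inequality is precisely what the Fourier-analytic correlation machinery is for.

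\medskip\noindent\textbf{The main obstacle, and a fallback.}
The difficulty is that ${\cal G}$ --- equivalently ${\cal U}$ --- carries essentially no structure beyond being dual to an intersecting family, and the desired correction term is simply false for general increasing ${\cal U}$ (take ${\cal U}=2^{[n]}$, excluded here only because $\f\cap 2^{[n]}=\f$ fails to be intersecting); meanwhile equality in Chv\'atal's conjecture is attained by non-star families as well (within $\f=2^{[3]}$, the family of all $\ge\!2$-element subsets of $[3]$ is a largest intersecting subfamily), so any usable inequality must be tight. A natural preprocessing step is to compress $\f$ by the shifting operators $S_{ij}$ and reduce to left-compressed $\f$, for which $\f_1$ is the natural candidate for the optimum. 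Absent a proof in full generality, the realistic plan is to prove this sharpened correlation inequality --- and hence Chv\'atal's conjecture --- for restricted classes of $\f$: downsets with few maximal members, downsets of bounded rank, left-compressed downsets, and the $p$-biased analogue, using these both as evidence for the conjecture and as a proving ground for the method.
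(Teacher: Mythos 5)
Your ``proof'' is not a proof: the statement in question is Chv\'atal's Conjecture itself, which is open, and what you have written is a (correct) reformulation followed by a research plan whose decisive step --- the ``sharpened correlation inequality'' --- is never established. The paper does not prove the conjecture either; it does exactly what you do in your first two paragraphs. Your identity $|\f_x|=\tfrac12\bigl(|\f|-2^{n-1}I_x(\f)\bigr)$ and the resulting equivalence with the bound $|{\cal G}|\le\tfrac12\bigl(|\f|-2^{n-1}\Imin(\f)\bigr)$ is precisely the content of Proposition~\ref{Prop:Equivalence}, and your target inequality is, after passing to complements, exactly Conjecture~\ref{Conj:Equiv-Chvatal}: $\Cor(\A,\B)\ge\tfrac14\Imin(\A)$ for $\A$ increasing and $\B$ increasing and antipodal. (One cosmetic difference: rather than your ${\cal U}=\bigcap_{B\in{\cal G}}{\cal U}_B$, which need not be intersecting or of measure exactly $1/2$, the paper extends ${\cal G}$ to a maximal intersecting family of $2^{[n]}$, which is automatically increasing, antipodal, and of measure $1/2$; this is the cleaner normalization, and your reduction is correct but should be routed through it.)

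Concerning the fallback plan, be aware of two facts from the paper that constrain it. First, the most natural sharpening of Harris, the ``dream relation'' $\Cor(\A,\B)\ge\tfrac14\sum_k I_k(\A)I_k(\B)$ (Proposition~\ref{Prop:basic}), is \emph{false} in general, and even its consequence with $\Imin(\A)\mu(\B)(1-\mu(\B))$ on the right fails for constants $c>\ln 2$ when $\B$ is merely balanced rather than antipodal (the tribes example of Section~\ref{sec:sub:most-general}); so any usable strengthening must genuinely exploit antipodality, not just $\mu(\B)=1/2$. Second, the best that current off-the-shelf machinery (Talagrand's and the Keller--Mossel--Sen inequalities combined with Harper and Theorem~\ref{TTalagrand2}) yields is $\Cor(\A,\B)\ge c\,\Imin(\A)/\sqrt{\log(1/\Imin(\A))}$, i.e.\ the conjecture up to an unbounded logarithmic loss, with the loss removable only under extra hypotheses such as regularity of $\B$ and polynomial measure bounds on $\A$ (Proposition~\ref{Prop:Weakly-Symmetric}). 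So your proposal correctly identifies the right reformulation, but the gap between it and a proof is exactly the gap the paper leaves open.
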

\nin
Of course it is no longer the case that {\em any} $x$ will suffice, and the difficulty of
identifying a suitable $x$ is a central reason for the conjecture's intractability.
Chv\'{a}tal's Conjecture has been the subject of many papers\footnote{A list of more than 20 papers
directly related to the conjecture appears at the website:
http://users.encs.concordia.ca/chvatal/conjecture.html.} (and surely far more effort
than this published record indicates),
but progress to date has been
limited, dealing mostly with either very special cases or variants.


In this paper we suggest an analytic approach. We show that Chv\'atal's Conjecture
can be restated in terms of influences (defined below) and correlation inequalities,
providing an opening for use of tools from discrete
Fourier analysis.\footnote{
Sean Eberhard independently considered similar relations, motivating
a MathOverflow question~\cite{Eberhard}.}
Our approach grew out of discussions
of a stronger form of Chv\'{a}tal's conjecture suggested by the second author about 25 years
ago (see Section~\ref{sec:Kahn}).

We first recall a few definitions.
In what follows, we identify subsets of $[n]$ with elements of the discrete
cube $\Omega = \{0,1\}^n$ in the natural way and write $\mu$ for
uniform
measure on $\gO$. The \emph{correlation} between $\A,\B\sub \Omega $ is
$\mathrm{Cor}(\A,\B) = \mu(\A \cap \B) - \mu(\A)\mu(\B)$.
More generally for $f,g:\gO\ra\mathbb{R}$ we use
$\mathrm{Cor}(f,g) = \mathbb{E}_\mu[fg] - \mathbb{E}_\mu[f] \mathbb{E}_\mu[g]$
(so
$\mathrm{Cor}(\chi_{_\A},\chi_{_\B}) =\mathrm{Cor}(\A,\B)$,
where we use $\chi$ for indicator;
of course $\mathrm{Cor}(f,g)$ is just the covariance of $f$ and $g$).
A family $\f$ is said to be
\emph{antipodal}
if $|\f\cap \{A,A^c\}|=1$ for each $A\sub [n]$ (with $A^c$ the complement of $A$).


The \emph{influence} of the $k^{th}$ variable on $\A \sub \Omega $ is
\[
I_k(\A) = 2\mu(\{x \in \A | x \oplus e_k \not \in \A\}),
\]
where $x \oplus e_k$ is gotten from $x$ by replacing $x_k$
by $1-x_k$. The \emph{total influence} of $\A$ is $I(\A) = \sum_{k=1}^n I_k(\A)$
and we write $\Imin(\A)$ for $\min_{1 \leq k \leq n} I_k(\A).$

\medskip
Recall that Harris' seminal correlation inequality \cite{Harris} says
that $\mathrm{Cor}(\A,\B)\geq 0$,
for increasing $\A,\B$. Michel Talagrand~\cite{Talagrand96}  initiated the study of: ``How much are increasing sets positively correlated?'', and
this question will be
a central theme for us as well. As we will see, Chv\'{a}tal's Conjecture
can also be formulated as a correlation inequality, {\em viz.}

\begin{conjecture}\label{Conj:Equiv-Chvatal}
For any increasing $\A $ and increasing antipodal
$\B$ (both $ \sub \Omega $),
\beq{basic}
\mathrm{Cor}(\A,\B) \geq \tfrac14 \Imin(\A).
\enq
\end{conjecture}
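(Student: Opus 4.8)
The plan is to pass to the Fourier side. Writing $\Cor(\A,\B)=\sum_{S\neq\emptyset}\widehat{\A}(S)\widehat{\B}(S)$ (using $\{0,1\}$-valued indicators and $\pm1$-valued characters $\chi_S(x)=\prod_{i\in S}(1-2x_i)$), the first step is to exploit antipodality of $\B$: the identity $\chi_\B(x)+\chi_\B(\bar x)\equiv 1$ forces $\widehat{\B}(\emptyset)=1/2$ and $\widehat{\B}(S)=0$ for every nonempty $S$ of even size, so $\B$ is supported on odd levels and $\Cor(\A,\B)=\sum_{|S|\,\mathrm{odd}}\widehat{\A}(S)\widehat{\B}(S)$. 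I would split this as $T_1+T_{\geq 3}$, the parts from level $1$ and from odd levels $\geq 3$.

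The second step disposes of $T_1$ exactly. For an increasing set one has $\widehat{\A}(\{k\})=-\tfrac12 I_k(\A)$, and likewise $\widehat{\B}(\{k\})=-\tfrac12 I_k(\B)$, so $T_1=\tfrac14\sum_k I_k(\A)I_k(\B)\geq \tfrac14\,\Imin(\A)\sum_k I_k(\B)=\tfrac14\,\Imin(\A)\,I(\B)$. Since $\B$ is antipodal, $\mu(\B)=1/2$, and the Poincar\'e (edge-isoperimetric) inequality $I(\B)\geq 4\,\mathrm{Var}(\B)$ gives $I(\B)\geq 1$, with equality only for a dictator. Hence $T_1\geq \tfrac14\,\Imin(\A)$ — exactly the target — so it would suffice to prove $T_{\geq 3}\geq 0$; more precisely, since the isoperimetric bound has slack $I(\B)-1=4\sum_{|S|\,\mathrm{odd},\,|S|\geq 3}(|S|-1)\widehat{\B}(S)^2$, it is enough to prove $T_{\geq 3}\geq -\tfrac14\,\Imin(\A)\,(I(\B)-1)$.

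The main obstacle is precisely this estimate on levels $\geq 3$, and I do not expect it to be easy — this is, after all, Chv\'atal's conjecture. A pure $L^2$ bound cannot work: writing $\B^{>1}$ for the part of $\B$ above level $1$, one has $\|\B^{>1}\|_2^2\leq \tfrac18(I(\B)-1)$, but $|T_{\geq 3}|\leq \|\A^{>1}\|_2\,\|\B^{>1}\|_2$ is far too lossy, since a $\B$ arbitrarily close to a dictator (so $I(\B)-1$ and $\|\B^{>1}\|_2$ both tiny) can be paired with an increasing $\A$ of small $\Imin(\A)$ but large high-level Fourier mass. So one must use the sign pattern of the Fourier coefficients of monotone functions, not just their magnitudes. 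The role of the reduction above is to isolate this difficulty and to render special cases accessible (small $\Imin(\A)$; low-degree $\A$; symmetric $\B$ or specific antipodal families). An equivalent, Fourier-free, formulation of the same crux: fixing a coordinate $k$ minimizing $I_j(\A)$, the conjectured inequality is equivalent to $\mu(\A\cap\B)\geq \mu(\A\cap\{x:x_k=1\})$, and applying the antipodal involution $x\mapsto\bar x$ turns this into the assertion that, among the points $x\in\B$ with $x_k=0$, at least as many lie in $\A$ as have $\bar x\in\A$; the hard part is building a monotone coupling (or injection) witnessing this from the fact that $\A$ and $\B$ are both increasing, for an \emph{arbitrary} increasing $\A$.
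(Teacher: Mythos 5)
The statement you are attacking is a \emph{conjecture}: the paper does not prove it, and only establishes (Proposition~\ref{Prop:Equivalence}) that it is equivalent to Chv\'atal's conjecture, which is open. Your proposal is likewise not a proof, and to your credit you say so explicitly; what you give is a correct Fourier-side reduction. The computations check out: antipodality does force $\hat\B(\emptyset)=1/2$ and kill the nonempty even levels; the level-one term is $T_1=\tfrac14\sum_k I_k(\A)I_k(\B)\geq\tfrac14\Imin(\A)I(\B)\geq\tfrac14\Imin(\A)$; and the slack identity $I(\B)-1=4\sum\{(|S|-1)\hat\B(S)^2:|S|\ \mathrm{odd},\,|S|\geq3\}$ is right. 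But the residual claim $T_{\geq3}\geq-\tfrac14\Imin(\A)\,(I(\B)-1)$ (let alone $T_{\geq3}\geq0$) is not a technical remainder---it carries the entire content of the conjecture, as your own closing paragraph concedes: the ``Fourier-free crux'' you arrive at, $\mu(\A\cap\B)\geq\mu(\A\cap\f_k)$ for the minimizing $k$, is literally the original statement again.

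It is worth placing your reduction against the paper. The sufficient condition $T_{\geq3}\geq0$ is exactly the paper's ``dream relation'' \eqref{Eq:dream}, $\Cor(\A,\B)\geq\tfrac14\sum I_k(\A)I_k(\B)$, specialized to antipodal $\B$; the paper cautions that \eqref{Eq:dream} is false for general increasing $\A,\B$ (the tribes example of Section~\ref{sec:sub:most-general} refutes even its consequence for balanced, non-antipodal $\B$), so any argument along your lines must use antipodality in bounding $T_{\geq3}$, not merely $\mu(\B)=1/2$---a point your $L^2$ discussion correctly anticipates. The paper's partial results take a different tack: off-the-shelf correlation inequalities of Talagrand and Keller--Mossel--Sen yield the bound only up to factors of $\log(1/\Imin(\A))$ or $\sqrt{\log(1/\Imin(\A))}$ (Corollaries~\ref{Claim:Weak} and~\ref{Claim:Half-Weak}), and the conjectured route to the full statement is via weighted-influence inequalities such as Conjecture~\ref{Conj:Kahn-Correlation}, whose right-hand sides are sums $\sum_i w_iI_i(\A)$ with $\sum w_i=\tfrac14$, rather than via a level-by-level split. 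In short: nothing you wrote is wrong, but nothing is proved either; the gap you leave open is the conjecture itself.
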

\nin
The equivalence is shown in Section \ref{sec:equivalent}.
We will also be interested in a weaker but more general possibility:

\begin{conjecture}
\label{Conjecture:Balanced}

For any increasing $\A ,\B\sub \Omega $
\begin{equation}\label{Eq:Question-Balanced}
\mbox{$\mathrm{Cor}(\A,\B) \geq c \Imin (\A) \mu (B) (1-\mu(B)).$}
\end{equation}
for some fixed (positive) $c$.
\end {conjecture}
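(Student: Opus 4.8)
The plan is to pass to Fourier language: the first Fourier level already supplies \eqref{Eq:Question-Balanced} (with $c=\tfrac14$), and the task is to show that the higher levels cannot destroy this. Writing $f=\chi_\A$, $g=\chi_\B$, we have $\Cor(\A,\B)=\sum_{\0\neq S\sub[n]}\hat f(S)\hat g(S)$, and for increasing sets $\hat f(\{k\})=\tfrac12 I_k(\A)\ge0$, $\hat g(\{k\})=\tfrac12 I_k(\B)\ge0$; hence the level-$1$ contribution is $\tfrac14\sum_k I_k(\A)I_k(\B)\ge\tfrac14\Imin(\A)\sum_k I_k(\B)=\tfrac14\Imin(\A)I(\B)$, which, by the discrete Poincar\'e inequality $I(\B)\ge\mu(\B)(1-\mu(\B))$, is at least $\tfrac14\Imin(\A)\mu(\B)(1-\mu(\B))$. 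So it would be enough to show that the levels $|S|\ge2$ contribute at least, say, $-\tfrac18\Imin(\A)\mu(\B)(1-\mu(\B))$. They really can be negative: for $\A=\{A\sub[n]:A\cap\{1,2\}\neq\0\}$ and $\B=\{A\sub[n]:\{1,2\}\sub A\}$ one has $\Cor(\A,\B)=\tfrac1{16}$ against a level-$1$ value of $\tfrac18$.

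The higher levels are controlled via Harris' inequality applied recursively. Splitting off a coordinate $k$ and writing $\A_b,\B_b\sub\{0,1\}^{[n]\sm\{k\}}$ for the restrictions $\{y:(y;x_k{=}b)\in\A\}$, etc., the law of total covariance gives the exact identity
\[
\Cor(\A,\B)=\tfrac12\bigl(\Cor(\A_0,\B_0)+\Cor(\A_1,\B_1)\bigr)+\tfrac14\,I_k(\A)\,I_k(\B),
\]
with $\Cor(\A_0,\B_0),\Cor(\A_1,\B_1)\ge0$ by Harris and $I_k(\A)=\mu(\A_1)-\mu(\A_0)$ by monotonicity. The route I would take is to induct on $n$ from this identity: choose $k$ with $I_k(\A)=\Imin(\A)$, apply the inductive bound to $(\A_0,\B_0)$ and $(\A_1,\B_1)$, and recombine using the companion identity $\mu(\B)(1-\mu(\B))=\tfrac12\bigl(\mu(\B_0)(1-\mu(\B_0))+\mu(\B_1)(1-\mu(\B_1))\bigr)+\tfrac14 I_k(\B)^2$. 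One checks in a line that the induction closes with any $c\le1$ \emph{provided} $\Imin(\A_0),\Imin(\A_1)\ge\Imin(\A)$, because then the inductive terms account for $\tfrac c2\Imin(\A)\bigl(\mu(\B_0)(1-\mu(\B_0))+\mu(\B_1)(1-\mu(\B_1))\bigr)$ while the cross term $\tfrac14\Imin(\A)I_k(\B)$ dominates the leftover $\tfrac c4\Imin(\A)I_k(\B)^2$ since $I_k(\B)\le1$.

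The obstacle — and the real content of the conjecture — is that $\Imin$ can genuinely drop under a one-coordinate restriction: in the extreme $\A_b$ stops depending on some variable, so $\Imin(\A_b)=0$ and the inductive estimate there is vacuous, and the gain $\tfrac14\Imin(\A)I_k(\B)$ is then far too small when $I_k(\B)$ is also small. Natural remedies to try are a cleverer (perhaps randomized, then averaged) choice of pivot coordinate that bounds the drop, and a strengthening of the inductive hypothesis that also carries an auxiliary sum $\sum_k\phi(I_k(\A)I_k(\B))$ stable under restriction, in the spirit of Talagrand's proof of his correlation inequality. Indeed Talagrand's inequality (or its sharpenings) already delivers \eqref{Eq:Question-Balanced} up to a single logarithmic factor — roughly $\Cor(\A,\B)\gtrsim\Imin(\A)\mu(\B)(1-\mu(\B))/\log(e/\mu(\B))$, after discarding the part of $I(\B)$ carried by coordinates of negligible $\B$-influence — and the leftover regime of bounded $I(\B)$ can be essentially reduced, via Friedgut's junta theorem, to a fixed finite-dimensional inequality (conditioning on the junta coordinates leaves $\mathbb{E}[\chi_\A\mid\text{junta}]$ monotone, so one is left with a linear-programming question over a fixed polytope of monotone functions). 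Removing that logarithm is the genuinely hard step: the subadditive machinery behind Talagrand-type bounds is organized around a weight $\phi(t)\asymp t/\log(1/t)$ and does not tolerate a $\phi$ linear near $0$, which is exactly what the linear-in-$\Imin$ target demands; closing the gap appears to require exploiting the fact that $\Imin(\A)$ is a \emph{minimum} over coordinates — only the single worst coordinate matters on the $\A$-side — rather than the coordinatewise subadditivity the standard arguments rely on.
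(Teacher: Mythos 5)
You should be aware that the statement you are addressing is Conjecture~\ref{Conjecture:Balanced}: it is posed in the paper as an open problem and is not proved there. The paper's actual results around it are strictly weaker: Corollary~\ref{Claim:Half-Weak} establishes \eqref{Eq:Question-Balanced} with an extra loss of $\sqrt{\log(1/\Imin(\A))}$ (at least of order $\sqrt{\log n}$, since $\Imin(\A)=O(n^{-1/2})$ always); Proposition~\ref{Prop:Weakly-Symmetric} proves it under the added hypotheses that $\B$ is balanced and regular and $n^{-a}<\mu(\A)<1-n^{-a}$; Proposition~\ref{Prop:Average-Chvatal} proves an averaged version; and Section~\ref{sec:sub:most-general} shows via tribes that no $c>\ln 2$ can work even for balanced $\B$. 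Your write-up is honest that it does not close the argument, and the parts you assert in detail are correct: the level-one contribution $\frac14\sum_k I_k(\A)I_k(\B)\ge\frac14\Imin(\A)I(\B)$ does majorize the target (essentially Proposition~\ref{Prop:basic}), it is not by itself a lower bound on the covariance, the one-coordinate conditioning identities are exact, and the induction does formally close whenever $\Imin(\A_0),\Imin(\A_1)\ge\Imin(\A)$.

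The gap is the one you name, and it is fatal rather than technical. For tribes, fixing a coordinate of a tribe to $0$ kills that tribe and sends the influences of its other members to $0$, so $\Imin(\A_0)=0$ and that branch of the induction is vacuous; moreover, since your induction, were it to close, would give $c=1$, it is outright contradicted by the $c\le\ln 2$ bound of Section~\ref{sec:sub:most-general}, so the monotonicity of $\Imin$ under restriction must fail in essentially every example of interest, not only in degenerate ones. Two further corrections. First, with the paper's convention $u_S(T)=(-1)^{|S\cap T|}$ one has $\hat\A(\{k\})=-I_k(\A)/2$ for increasing $\A$ (harmless here, since only the product of the two level-one coefficients enters). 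Second, your claim that Talagrand's inequality ``already delivers \eqref{Eq:Question-Balanced} up to a factor $\log(e/\mu(\B))$'' is an overstatement: the logarithm in $\varphi$ sits over $\sum I_k(\A)I_k(\B)$, and after the only available lower bound $\sum I_k(\A)I_k(\B)\ge\Imin(\A)I(\B)$ the loss is $\log(1/\Imin(\A))$, not $\log(e/\mu(\B))$; for balanced $\B$ a loss of $\log(e/\mu(\B))=O(1)$ would already be the conjecture up to a constant, i.e.\ exactly the open problem (compare \eqref{Eq:KMS+Tala}). Similarly, the proposed reduction of the bounded-$I(\B)$ regime to a finite-dimensional inequality via Friedgut's junta theorem is not worked out: conditioning on the junta coordinates of $\B$ discards the quantity $\Imin(\A)$, which is a minimum over all $n$ coordinates, so the resulting ``linear-programming question'' no longer carries the information the right-hand side of \eqref{Eq:Question-Balanced} requires.
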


\nin
As we will see in Section~\ref{sec:sub:most-general},
(\ref {Eq:Question-Balanced})
is not true
with $c > \ln 2$,
even if $\B$ is balanced (i.e., $\mu(\B)=1/2$); in particular,
the antipodality in Conjecture~\ref{Conj:Equiv-Chvatal} cannot be replaced
by the weaker assumption that $\B$ is balanced.
On the other hand, Kahn's strong from of Chv\'atal's conjecture (Conjecture~\ref{Conj:Kahn} below)
implies that (\ref {Eq:Question-Balanced}) does hold with $c=1/2$,
and the possibility  that this relaxation loses
{\em only} a constant factor seems to us one of the more interesting
aspects of the present discussion.


Lower bounds on the correlations of increasing families in terms of influences
were obtained by Talagrand~\cite{Talagrand96}
(as already mentioned; see
Theorem~\ref{TTalagrand} below) and by Keller, Mossel, and Sen~\cite{KMS14}
(Theorem~\ref{TKMS}). In Section \ref{sec:correlation}, we combine these results with results about
influences of an individual family (due to Kahn, Kalai, and Linial \cite{KKL}, and
Talagrand \cite{Talagrand94}), to
prove Conjecture~\ref{Conjecture:Balanced} under some (fairly strong) additional
hypotheses. We also prove, for general increasing families $\A,\B$,

\begin{equation}\label{Eq:KMS+Tala}
\mbox{$\mathrm{Cor}(\A,\B) \geq c \Imin (\A)\mu (B) (1-\mu (B))/ \sqrt {\log (1/\Imin (\A))} $}.
\end{equation}
\nin
These results may be thought of as illustrating connections with existing
Fourier technology. Inequality (\ref {Eq:KMS+Tala}), while weak compared to what we are after,
may serve as a useful benchmark for future research.
In Section~\ref{sec:average} we rely on \cite {Keller09} and, perhaps surprisingly, show that
Conjecture~\ref{Conj:Equiv-Chvatal} is true in some average sense.


In Sections \ref{sec:correlation2}  and \ref{sec:proofs} we propose and study
strengthenings of Harris' inequality
that would imply Conjectures \ref{Conj:Equiv-Chvatal} and \ref{Conjecture:Balanced}.
One possibility is the following
consequence of Kahn's conjecture. (Here we use $\hat \C(S)$ for the Fourier coefficient
$\hat\chi_{_\C}(S)$; Fourier definitions are recalled in the next section.)




\begin{conjecture}\label{conjecture:Kahn-intro}
For any increasing $\A, \B \sub \Omega $,
\begin {equation}
\mathrm{Cor}(\A,\B) \geq c \sum_{k=1}^n I_k(\A)
\sum\{\frac {1} {|S|}\hat \B (S)^2:S\ni k\},
\end {equation}
for some universal $c$.  If $\B$ is antipodal this is true with $c=1$.
\end{conjecture}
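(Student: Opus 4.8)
The plan is to recast the antipodal case in Fourier terms and prove it by induction on $n$. In the $\pm1$ character basis one has $\mathrm{Cor}(\A,\B)=\sum_{S\neq\0}\hat\A(S)\hat\B(S)$ and $I_k(\A)=4\sum_{S\ni k}\hat\A(S)^2$, so, writing $w_k(\B):=\sum_{S\ni k}\hat\B(S)^2/|S|$, the right-hand side of the inequality with $c=1$ equals $\sum_k I_k(\A)w_k(\B)=\sum_S\frac{\hat\B(S)^2}{|S|}\sum_{k\in S}I_k(\A)$, and $\sum_k w_k(\B)=\mathrm{Var}(\chi_\B)$. For increasing $\B$, antipodality is equivalent to $\hat\B$ being supported on $\0$ together with the odd-size sets (with $\hat\B(\0)=1/2$), and this is the only property of antipodal families the argument will use. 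Thus the goal is
\[
\sum_{S\neq\0}\hat\A(S)\hat\B(S)\ \ge\ \sum_{S}\frac{\hat\B(S)^2}{|S|}\sum_{k\in S}I_k(\A)
\]
for $\A$ increasing and $\B$ increasing with $\hat\B$ vanishing on all even nonempty levels. (Sanity check: averaging the right-hand side against $\Imin(\A)\le I_k(\A)$ gives $\Imin(\A)\,\mathrm{Var}(\chi_\B)=\tfrac14\Imin(\A)$, which recovers Conjecture~\ref{Conj:Equiv-Chvatal}; both sides are tight when $\B$ is a dictatorship, or when $\B=\mathrm{Maj}_3$ and $\A$ is a single top point.)

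First I would peel off coordinate $n$, writing $\A_0\sub\A_1$ and $\B_0\sub\B_1$ for the two slices. The one-coordinate expansion --- equivalently, Harris' inequality together with its exact remainder --- gives
\[
\mathrm{Cor}(\A,\B)=\tfrac12\mathrm{Cor}(\A_0,\B_0)+\tfrac12\mathrm{Cor}(\A_1,\B_1)+\tfrac14 I_n(\A)I_n(\B),
\]
with $I_n(\A)=\mu(\A_1)-\mu(\A_0)\ge0$, $I_n(\B)=\mu(\B_1)-\mu(\B_0)\ge0$, and $I_k(\A)=\tfrac12(I_k(\A_0)+I_k(\A_1))$ for $k<n$. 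The ``diagonal'' contribution $I_n(\A)w_n(\B)$ of the right-hand side is absorbed by $\tfrac14 I_n(\A)I_n(\B)$ for free, since $w_n(\B)=\sum_{S\ni n}\hat\B(S)^2/|S|\le\sum_{S\ni n}\hat\B(S)^2=\tfrac14 I_n(\B)$. For $k<n$, using $\hat\B(S)=\tfrac12(\hat{\B_0}(S)+\hat{\B_1}(S))$ and $\hat\B(S\cup\{n\})=\tfrac12(\hat{\B_0}(S)-\hat{\B_1}(S))$, a short computation gives
\[
w_k(\B)=\tfrac12\bigl(w_k(\B_0)+w_k(\B_1)\bigr)-\sum_{S\ni k}\frac{(\hat{\B_0}(S)-\hat{\B_1}(S))^2}{4|S|(|S|+1)}\ \le\ \tfrac12\bigl(w_k(\B_0)+w_k(\B_1)\bigr).
\]
Here is where antipodality enters: when $\B$ is antipodal, $\B_1=\{z:z^c\notin\B_0\}$, so $\hat{\B_1}(S)=\pm\hat{\B_0}(S)$ for every $S\neq\0$, whence $w_k(\B_0)=w_k(\B_1)$ and therefore $w_k(\B)\le w_k(\B_0)=w_k(\B_1)$. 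Consequently the per-coordinate bound $I_k(\A)w_k(\B)\le\tfrac12\bigl(I_k(\A_0)w_k(\B_0)+I_k(\A_1)w_k(\B_1)\bigr)$ holds --- the potentially negative ``cross term'' $\tfrac14(I_k(\A_0)-I_k(\A_1))(w_k(\B_0)-w_k(\B_1))$ has been killed --- and summing over $k<n$ while invoking the inductive hypotheses for $(\A_0,\B_0)$ and $(\A_1,\B_1)$ would close the induction; the base case $n\le1$ (where increasing and antipodal force $\B=\{x:x_1=1\}$, with equality) is immediate.

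The step I expect to be the real obstacle is making this induction legitimate: a slice of an antipodal family is only \emph{antipodal-pair-free}, not antipodal, so $\B_0,\B_1$ fall outside the class for which the inductive hypothesis was stated, and the crucial equality $w_k(\B_0)=w_k(\B_1)$ is not inherited at the next level. Resolving this requires carrying the induction on a broader statement. One option is to keep, throughout the recursion, a nonnegative correction term added to $\sum_k I_k(\A)w_k(\B)$ that exactly books the cross terms $\tfrac14\sum_k(I_k(\A_0)-I_k(\A_1))(w_k(\B_0)-w_k(\B_1))$ accumulated along the way; this correction is governed by $\sum_{S\neq\0,\,|S|\ \mathrm{even}}\hat\B(S)^2$, the $\ell^2$-mass of $\B$ on the even levels, which measures precisely the failure of antipodality and vanishes in the antipodal case. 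An alternative is to track $\B$ together with its conjugate $\B^{*}=\{A:A^c\notin\B\}$: restricting an antipodal $\B$ produces the pair $(\B_0,\B_0^{*})$, a further restriction produces two such conjugate pairs, so the structure is preserved, and one proves the combined bound for $\tfrac12\mathrm{Cor}(\A_0,\B_0)+\tfrac12\mathrm{Cor}(\A_1,\B_1)$ directly, using the identity $\mathrm{Cor}(\A_1,\B_0^{*})=\mathrm{Cor}(\A_1^{*},\B_0)$ (with $\A_1^{*}=\{z:z^c\notin\A_1\}$ again increasing). Either way, the whole weight of the proof is shifted onto controlling this non-antipodal detour, which is the one genuinely delicate point.
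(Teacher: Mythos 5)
This statement is one of the paper's \emph{conjectures}, not a theorem: the paper gives no proof of it, and derives it only conditionally, as a consequence of the (still open) Conjecture~\ref{Conj:Kahn} via the equivalences of Section~\ref{Connection} and Proposition~\ref{Prop:reduction}; the only unconditional surrogate actually proved is the symmetric diagonal version, Proposition~\ref{Prop:Kahn-Correlation-Weaker}. So the question is only whether your argument stands on its own, and by your own admission it does not: the last paragraph concedes that the induction is not legitimate as stated, and neither of the two proposed repairs is carried out.

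The preparatory computations are all correct --- the Fourier reformulation, the identity $\mathrm{Cor}(\A,\B)=\tfrac12\mathrm{Cor}(\A_0,\B_0)+\tfrac12\mathrm{Cor}(\A_1,\B_1)+\tfrac14 I_n(\A)I_n(\B)$, the exact expression for $w_k(\B)$ in terms of $w_k(\B_0),w_k(\B_1)$, and the fact that antipodality of $\B$ forces $\hat{\B_1}(S)=\pm\hat{\B_0}(S)$ and hence kills the cross term at the top level. But the gap is not a technicality that a ``correction term'' will routinely absorb; it is fatal to this induction scheme in a quantifiable way. After one restriction, $\B_0$ and $\B_1$ are merely a dual pair of increasing families, and for general increasing $\B$ the inequality with $c=1$ is \emph{false}: in the tribes example of Section~\ref{sec:sub:most-general} both families are regular, so $\sum_k I_k(\A)\sum\{\hat\B(S)^2/|S|:S\ni k\}=\Imin(\A)\,\mu(\B)(1-\mu(\B))\to\tfrac14\Imin(\A)$, while $\mathrm{Cor}(\A,\B)/\Imin(\A)\to\tfrac{\ln 2}{4}$, giving a ratio tending to $\ln 2<1$. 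Hence the inductive hypothesis cannot be the same statement applied to the slices, and any bookkeeping of the accumulated even-level $\ell^2$-mass (or of conjugate pairs $(\B_0,\B_0^{*})$) must recover at least a factor $1/\ln 2$ somewhere --- precisely the hard part, which is left entirely open. As it stands, your argument proves nothing beyond the one-step identity, and the conjecture remains open, as in the paper.
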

\nin
Notice that this
gives a lower bound for $\mathrm{Cor}(\A,\B)$ in terms of a weighted sum of
the influences of $\A$, with the sum of weights in the antipodal case
equal to 1/4.





\section{Reformulation and preliminaries}
\label{sec:equivalent}

This section gives the easy equivalence of the two versions of Chv\'{a}tal's
Conjecture stated in the introduction and some additional background and comments.

\subsection {Reformulation}

\begin{proposition}\label{Prop:Equivalence}
For a decreasing $\mathcal{F} \sub \Omega $ the following
statements are equivalent.

\mn
{\rm (a)}
There is a $k\in [n]$ for which
$\max\{|\B|:\B\sub \mathcal{F}~\mbox{intersecting}\}= |\{A\in \mathcal{F}:k\in A\}|$.

\mn
{\rm (b)}
There is a $k\in [n]$ such that
the maximum correlation of $\f$ with a maximal intersecting $\B$
is attained by $\B=\{A\in \Omega: k\in A\}$.

\mn
{\rm (c)}
For any increasing, antipodal $\B  \sub \Omega $,
$
\mathrm{Cor}(\mathcal{F},\B ) \leq -\frac14 (\Imin(\mathcal{F})).
$
\end{proposition}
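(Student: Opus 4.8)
The plan is to pass everything through the correlation language, where the whole statement reduces to one identity. Recall the classical fact that the \emph{maximal} intersecting families $\B\sub\gO$ (those to which no set can be added) are precisely the increasing antipodal families: such a $\B$ is increasing (enlarging a member cannot destroy intersectingness), and for each $A$ it contains exactly one of $A,A^c$ (not both, being disjoint; at least one, since $A\notin\B$ gives some $B\in\B$ with $B\sub A^c$, whence $A^c\in\B$ by monotonicity). Moreover every intersecting $\mathcal G\sub\f$ extends to a maximal one, so
\[
\max\{|\mathcal G|:\mathcal G\sub\f\text{ intersecting}\}=\max\{|\f\cap\B|:\B\text{ increasing antipodal}\}.
\]
Since antipodal $\B$ have $\mu(\B)=1/2$, we get $\Cor(\f,\B)=\mu(\f\cap\B)-\tfrac12\mu(\f)$ with $\mu(\f)$ fixed, so maximizing $|\f\cap\B|$ and maximizing $\Cor(\f,\B)$ over increasing antipodal $\B$ are the same thing; applying this to the principal families $\B_k:=\{A:k\in A\}$ (which are increasing antipodal, with $\f\cap\B_k=\{A\in\f:k\in A\}$) yields the equivalence of (a) and (b) immediately.

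The one genuine computation is the value of $\Cor(\f,\B_k)$ for decreasing $\f$. I would partition $\gO$ into the $2^{n-1}$ pairs $\{x,x\oplus e_k\}$; because $\f$ is decreasing, no pair has \emph{only} its $k$-containing element in $\f$, so each pair is either entirely in $\f$, or has only its $k$-avoiding element in $\f$, or is disjoint from $\f$, and the middle case is exactly what $I_k(\f)$ counts. This gives $\mu(\{A\in\f:k\notin A\})-\mu(\{A\in\f:k\in A\})=\tfrac12 I_k(\f)$, hence $\mu(\{A\in\f:k\in A\})=\tfrac12\mu(\f)-\tfrac14 I_k(\f)$, and therefore
\[
\Cor(\f,\B_k)=\mu(\{A\in\f:k\in A\})-\tfrac12\mu(\f)=-\tfrac14 I_k(\f).
\]

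To finish, note that the $\B_k$ lie among the increasing antipodal families, so $\max_{\B}\Cor(\f,\B)\ge\max_k\Cor(\f,\B_k)=-\tfrac14\Imin(\f)$ holds unconditionally. Hence statement (c) --- that $\Cor(\f,\B)\le-\tfrac14\Imin(\f)$ for every increasing antipodal $\B$ --- is equivalent to the equality $\max_\B\Cor(\f,\B)=-\tfrac14\Imin(\f)$, attained at $\B_k$ for any $k$ minimizing $I_k(\f)$. By the displayed identity this is precisely the assertion that the maximum of $\Cor(\f,\cdot)$ over increasing antipodal (equivalently, maximal intersecting) families is attained at some principal $\B_k$, i.e.\ (b). So (a) $\Leftrightarrow$ (b) $\Leftrightarrow$ (c).

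I do not anticipate a real obstacle: the only content is the identity $\Cor(\f,\B_k)=-\tfrac14 I_k(\f)$, and everything else is the dictionary between maximal intersecting families and increasing antipodal sets, together with the fact that the relevant maxima, being over a finite collection, are attained. The single point to state with care is that an arbitrary intersecting subfamily of $\f$ embeds in a maximal one, which is what makes the two maxima in the first display agree.
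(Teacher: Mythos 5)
Your proposal is correct and follows essentially the same route as the paper: the dictionary between maximal intersecting and increasing antipodal families, the identity $|\f\cap\B|=2^n(\Cor(\f,\B)+\mu(\f)/2)$ for antipodal $\B$, and the computation $\Cor(\f,\B_k)=-\tfrac14 I_k(\f)$ for decreasing $\f$. You are somewhat more explicit than the paper (which cites the maximal-intersecting characterization and leaves the (b)$\Leftrightarrow$(c) bookkeeping implicit), but there is no substantive difference.
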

\nin
Of course (a) is Conjecture~\ref{Conj:Chvatal}, while
(c) is the same as
Conjecture~\ref{Conj:Equiv-Chvatal}, since, for any
$\A,\B $,
$\mathrm{Cor}(\A,\B )=-\mathrm{Cor}(\A^c,\B )$
(more generally,
$\Cor(1-f,g)=\Cor(1,g)-\Cor(f,g)=-\Cor(f,g)$ for any $f,g$).
%

\begin{proof}
It is obvious (and standard) that the maximum in (a)
is the same as
\[
\max\{|\f\cap \B|: \B ~\mbox{maximal intersecting}\},
\]
and that each maximal intersecting $\B$ has measure 1/2.
(It is easy to see---and was observed e.g. in \cite{EK74}---that $\mathcal{F} $
is maximal intersecting if and only if it is increasing and antipodal.)
Thus for maximal intersecting $\B$ we have
\beq{Bcapf}
|\f \cap \mathcal{B}| = 2^n (\mathrm{Cor}(\f,\B) +
\mu(\f)/2),
\enq
which implies the equivalence of (a) and (b)
(since we maximize the left side of \eqref{Bcapf} by maximizing $\Cor(\f,\B)$).
For the equivalence of (c) we just observe that for $\B$ as in (b)
(sometimes called a ``dictatorship") we have
\[
\Cor(\f,\B ) = -  \mu(\{A\in \f:A\cup\{x\}\not\in \f\})/2
= - I_k(\f)/4.
\]
\end{proof}

\subsection {Harper and Fourier-Walsh}

Harper's classic edge-isoperimetric inequality~\cite{Harper} says
(though not originally in this language) that for all $\A  \sub \Omega$,
\beq{THarper}
I(\A ) \geq 2 \mu(\A ) \log_2 (1/\mu(\A )).
\enq
In particular $I(\A) \ge 1$ for balanced $\A$.

\mn
{\em Definition.}
For $f:\Omega  \rightarrow \mathbb{R}$, the {\em Fourier-Walsh
expansion} of $f$ is the (unique) representation
\[
f = \sum\{\alpha_S u_S: S\sub [n]\},
\]
where $
u_S(T)=(-1)^{|S \cap T|}
$
for $T \sub [n]$.
The ({\em Fourier}) coefficients $\alpha_S$ are also denoted $\hat f(S)$.

\medskip
Since
$\{u_S\}$ is an orthonormal basis for the
space of functions $f:\Omega  \rightarrow \mathbb{R}$ (relative to
the usual inner product $\langle\cdot,\cdot\rangle$
with respect to uniform
measure), the representation is indeed unique, with
$\hat f(S) = \langle f ,u_S\rangle$,
and we have {\em Parseval's identity:}
\beq{Parseval}
\langle f,g\rangle=\sum \hat{f}(S)\hat{g}(S)~~~ \forall f,g.
\enq
Thus (since $\mu(f)=\hat f(\0)$),
\beq{Parseval2}
\mathrm{Cor}(f,g) = \sum_{S \ne \emptyset} \hat f(S)\hat g(S).
\enq
As we have already done above, we will sometimes use $\hat \C (S)$ for $\hat{\chi_{_\C} }(S)$.
It is standard (see e.g. \cite{KKL}) that for any $\A  \sub \Omega $ and $i\in [n]$,
$I_i(\A ) = 4\sum\{\hat \A (S)^2:S\ni i\}$. If $\A$ is decreasing then also $I_i(\A)=2\hat \A(\{i\})$
and if $\A$ is increasing then  $I_i(\A)=-2\hat \A(\{i\})$.

\subsection {The dream relation}

The following observation is simple but crucial for our line of thought
({\em cf.} the aforementioned Theorems~\ref{TTalagrand} and~\ref{TKMS}).

\begin{proposition}\label{Prop:basic}
Let $\A,\B$ be increasing events with $\mu(\B)=t$.  If
\beq{Eq:dream}
\mbox{$\mathrm{Cor}(\A ,\B ) \geq  \frac {1}{4} \sum I_k(\A )I_k(\B ),$}
\enq
then
\beq{Eq:dream-cons}
\mbox{$\mathrm{Cor}(\A ,\B ) \geq \frac {1}{2} \Imin(\A) t \log_2 (1/t).$}
\enq
In particular, if $\B$ is balanced then $\mathrm{Cor}(\A ,\B ) \geq  \frac {1}{4} I_{min}(\A )$.
\end {proposition}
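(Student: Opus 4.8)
The plan is to derive \eqref{Eq:dream-cons} from \eqref{Eq:dream} by lower-bounding the right-hand side of \eqref{Eq:dream} in two stages. First I would replace each factor $I_k(\A)$ by the uniform lower bound $\Imin(\A)$, pulling it out of the sum:
\[
\tfrac14\sum_k I_k(\A)I_k(\B) \ge \tfrac14 \Imin(\A)\sum_k I_k(\B) = \tfrac14\Imin(\A) I(\B).
\]
Then I would invoke Harper's edge-isoperimetric inequality \eqref{THarper} applied to $\B$, namely $I(\B) \ge 2\mu(\B)\log_2(1/\mu(\B)) = 2t\log_2(1/t)$. Substituting this into the previous display gives
\[
\mathrm{Cor}(\A,\B) \ge \tfrac14\Imin(\A)\cdot 2t\log_2(1/t) = \tfrac12\Imin(\A)\, t\log_2(1/t),
\]
which is exactly \eqref{Eq:dream-cons}.

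For the ``in particular'' clause, I would just specialize to $t = 1/2$: then $t\log_2(1/t) = \tfrac12\cdot 1 = \tfrac12$, so the bound reads $\mathrm{Cor}(\A,\B) \ge \tfrac12\Imin(\A)\cdot\tfrac12 = \tfrac14\Imin(\A)$, as claimed.

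Honestly, there is no real obstacle here — the proposition is a two-line deduction, and the only things being used are the trivial bound $I_k(\A)\ge\Imin(\A)$ for each $k$ and the already-cited Harper inequality \eqref{THarper}. The one point worth a word of care is that \eqref{Eq:dream} is the hypothesis (not something to be proven), so the entire content is the chain $\sum_k I_k(\A)I_k(\B) \ge \Imin(\A)I(\B) \ge 2\Imin(\A)\, t\log_2(1/t)$; I would present it essentially as the display above with a sentence citing Harper. If one wanted to be slightly more careful, one could note that the function $t\mapsto t\log_2(1/t)$ is nonnegative on $[0,1]$, so the inequality is vacuous-but-true at the endpoints, but this is not needed for the statement.
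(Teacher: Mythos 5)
Your proof is correct and is essentially identical to the paper's: both combine the hypothesis \eqref{Eq:dream} with the chain $\sum_k I_k(\A)I_k(\B)\ge \Imin(\A)I(\B)\ge 2\Imin(\A)t\log_2(1/t)$, the last step being Harper's inequality \eqref{THarper}. The specialization to $t=1/2$ for the ``in particular'' clause is likewise the same.
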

\begin {proof} Combining \eqref{Eq:dream} and Harper's inequality gives
\[
\mathrm{Cor}(\A ,\B ) \geq  \frac {1}{4} \sum I_k(\A )I_k(\B ) \ge
\frac {1}{4} \Imin(\A)I(\B)\ge \frac {1}{2} \Imin (A)t \log_2(1/t).
\]
\end {proof}

\nin
Again, Proposition~\ref{Prop:basic} is mainly motivational;
neither \eqref{Eq:dream} (the ``dream relation'')
nor its consequence \eqref{Eq:dream-cons} is true in general.
In this paper, we consider weaker statements of similar type.




\section{The conjectures of Kleitman and Kahn}
\label{sec:Kahn}

In this section we describe an earlier analytic approach to Chv\'{a}tal's Conjecture
suggested by the second author in an unpublished manuscript in the early 90s~\cite{Kahn-unpublished}.
This built on a strengthening of Chv\'{a}tal's Conjecture proposed by
Kleitman~\cite{Kleitman} in 1979.


\begin{definition}\label{flow}
Let $f,g:\Omega  \rightarrow \mathbb{R}^{+}$. We say that $f$ \emph{flows to} $g$
if there exists $v:\Omega  \times \Omega  \rightarrow \mathbb{R}^+$ such that:
\begin{enumerate}
\item For any $A \in \Omega $, we have $\sum_B v(A,B) = f(A)$.

\item For any $B \in \Omega $, we have $\sum_A v(A,B) = g(B)$.

\item If $A \not \sub B$, then $v(A,B)=0$.
\end{enumerate}
\end{definition}
\nin
Equivalently ({\em via} max-flow min-cut),
$f$ flows to $g$ if $\sum_A f(A) = \sum_A g(A)$, and
$f(\f)\geq g(\f)$ for every decreasing family $\f$
(where $f(\f) =
\sum_{A \in \mathcal{F}} f(A)$).

\mn
{\em Notation.}
For a ``principal'' family $\f=\f_i = \{A : i \in A\}$,
we set $\chi_{_\f}=\chi_{_i}$
(recalling that $\chi_{_\f}$ is
the indicator of $\f$).

\medskip
The following strengthening of Chv\'{a}tal's
conjecture was proposed by Kleitman \cite{Kleitman}.
\begin{conjecture}\label{CKleitman}
For any maximal (w.r.t. inclusion) intersecting
$\f \sub \Omega $,
there is a convex combination
$\sum_{i=1}^n \lambda_i \chi_{_i}$
of $\chi_1,\ldots,\chi_n$
that flows to $\chi_{_\f}$.
\end{conjecture}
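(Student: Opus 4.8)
\mn
\emph{Proof plan.} The plan is to convert the flow condition into a system of linear inequalities, dualize it, and land on a correlation inequality of the kind discussed above. First, by the max-flow/min-cut reformulation of ``flows to'' recalled just before the conjecture, $g:=\sum_{i=1}^n\lambda_i\chi_{_i}$ flows to $\chi_{_\f}$ exactly when (i) $\sum_A g(A)=\sum_A\chi_{_\f}(A)$ and (ii) $g(\J)\ge\chi_{_\f}(\J)$ for every decreasing $\J$. Since a maximal intersecting $\f$ is balanced, $\sum_A\chi_{_\f}(A)=2^{n-1}=\sum_A\chi_{_i}(A)$, so (i) holds automatically for any convex combination. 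Passing to complements, (ii) is equivalent to $g(\mathcal U)\le|\f\cap\mathcal U|$ for every increasing $\mathcal U$; dividing by $2^n$, using $\mu(\f_i)=\mu(\f)=1/2$ and $\sum_i\lambda_i=1$ to cancel the product terms, and using the one-line identity $\Cor(\f_i,\mathcal U)=\tfrac14 I_i(\mathcal U)$ (which follows from $\chi_{_i}=\tfrac12-\tfrac12 u_{\{i\}}$, \eqref{Parseval2}, and $I_i(\mathcal U)=-2\hat{\mathcal U}(\{i\})$ for increasing $\mathcal U$), the whole system collapses to: there exists $\lambda$ in the simplex $\Delta_n$ with
\[
\tfrac14\sum_{i=1}^n\lambda_i I_i(\mathcal U)\ \le\ \Cor(\f,\mathcal U)\qquad\text{for every increasing }\mathcal U\sub\Omega .
\]
Thus Conjecture~\ref{CKleitman} asserts the ``dream relation'' \eqref{Eq:dream} against $\f$ for \emph{some} probability weighting $\lambda$ of the influences of $\mathcal U$; in particular it is a weakening of the (false) \eqref{Eq:dream}, which would imply it via $\lambda_i=I_i(\f)/I(\f)$.

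\mn
Second, I would dualize the existential quantifier over $\lambda$. The expression $\tfrac14\sum_i\lambda_i I_i(\mathcal U)-\Cor(\f,\mathcal U)$ is linear in $\lambda$, and both $\mathcal U\mapsto I_i(\mathcal U)$ (on increasing families, via $I_i(\mathcal U)=-2\hat{\mathcal U}(\{i\})$) and $\mathcal U\mapsto\Cor(\f,\mathcal U)$ extend linearly from indicators of increasing families to their convex hull, which is exactly the set of monotone $h:\Omega\to[0,1]$. Viewing the display above as a bilinear game (player one picking $\lambda\in\Delta_n$, player two picking a monotone $h$) and invoking von Neumann's minimax theorem, Conjecture~\ref{CKleitman} for a fixed $\f$ becomes equivalent to
\[
\min_{1\le i\le n}\bigl(-2\hat h(\{i\})\bigr)\ \le\ 4\,\Cor(\f,h)\qquad\text{for every monotone }h:\Omega\to[0,1],
\]
where $-2\hat h(\{i\})=\mathbb{E}[h\mid x_i=1]-\mathbb{E}[h\mid x_i=0]\ge 0$. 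The payoff of this step is that we no longer have to \emph{produce} the weights: it suffices that against every monotone test $h$ \emph{some} coordinate has small first derivative compared with $\Cor(\f,h)$ (which is $\ge 0$ by Harris).

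\mn
Third, to attack this inequality I would peel off the degree-one part. Substituting $\hat\f(\{i\})=-\tfrac12 I_i(\f)$ into $\Cor(\f,h)=\sum_{S\neq\0}\hat\f(S)\hat h(S)$ gives
\[
4\,\Cor(\f,h)\ =\ \sum_{i=1}^n I_i(\f)\bigl(-2\hat h(\{i\})\bigr)\ +\ 4\sum_{|S|\ge 2}\hat\f(S)\hat h(S).
\]
Since $-2\hat h(\{i\})\ge 0$ for monotone $h$ and $\sum_i I_i(\f)=I(\f)\ge 1$ by Harper (using $\mu(\f)=1/2$), the first sum is already $\ge I(\f)\min_i(-2\hat h(\{i\}))\ge\min_i(-2\hat h(\{i\}))$; so the remaining task is to show that the negative part of the degree-$\ge2$ cross-term is absorbed by the slack $\sum_i I_i(\f)(-2\hat h(\{i\}))-\min_i(-2\hat h(\{i\}))$. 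Two plausible routes: (a) bound the higher-degree cross-term using the Talagrand and Keller--Mossel--Sen correlation inequalities (Theorems~\ref{TTalagrand} and~\ref{TKMS}), exploiting the antipodality of $\f$; or (b) take the explicit weights $\lambda_i:=4\sum\{\tfrac{1}{|S|}\hat\f(S)^2:S\ni i\}$, which lie in $\Delta_n$ since $\sum_i\lambda_i=4\sum_{S\neq\0}\hat\f(S)^2=4(\mu(\f)-\mu(\f)^2)=1$, and observe that with these weights Conjecture~\ref{CKleitman} is precisely the antipodal case ($c=1$) of Conjecture~\ref{conjecture:Kahn-intro}, so any progress there transfers verbatim.

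\mn
The hard part is the one that has always obstructed Chv\'atal's conjecture: the degree-$\ge2$ Fourier mass of $\f$ carries no sign, so there is no soft reason for the cross-term above to be controlled, and choosing the weights $\lambda$ is precisely the problem of identifying the good element $x$. I expect the reformulation and the degree-one reduction to be routine, while the genuinely new input needed is a correlation inequality strictly stronger than Harris --- the content of Conjecture~\ref{conjecture:Kahn-intro} --- or, short of that, a direct verification for structured $\f$ (boundedly many variables, or $\f$ stable under a compression operation, where the Fourier coefficients can be tracked explicitly and $\lambda$ placed on the heaviest coordinate).
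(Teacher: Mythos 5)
You should first note that the statement you are addressing is Kleitman's Conjecture, which the paper presents as an \emph{open problem} (it strengthens Chv\'atal's conjecture, itself open); the paper contains no proof of it, and your proposal does not supply one either --- as you yourself acknowledge in your final paragraph. What you have written is a chain of reformulations that terminates exactly at the open content. The reformulations themselves check out: the max-flow/min-cut translation, the cancellation of the product terms using $\mu(\f)=1/2$ and $\sum_i\lambda_i=1$, and the identity $\Cor(\f_i,\mathcal U)=\tfrac14 I_i(\mathcal U)$ are all correct, and your minimax step (legitimate, since the monotone $[0,1]$-valued functions form the order polytope, whose vertices are the indicators of increasing families) correctly shows that Conjecture~\ref{CKleitman} for fixed $\f$ is equivalent to the inequality $\min_i(-2\hat h(\{i\}))\le 4\,\Cor(\f,h)$ for every monotone $h:\Omega\to[0,1]$. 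This is in substance Fishburn's equivalence (Conjecture~\ref{Conj:Eq-Kleitman}) combined with the correlation reformulation of Proposition~\ref{Prop:Equivalence}: you have rederived that Kleitman's conjecture is the extension of Conjecture~\ref{Conj:Equiv-Chvatal} from indicators to monotone $[0,1]$-valued test functions. Your route (b), with $\lambda_i=4\sum\{|S|^{-1}\hat\f(S)^2:S\ni i\}$, is precisely Conjecture~\ref{Conj:Kahn-Correlation}(b), i.e.\ the antipodal case of Conjecture~\ref{conjecture:Kahn-intro}, which is likewise open.

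The genuine gap is therefore the one you name but do not close: after peeling off the degree-one part, one must show that the signed cross-term $4\sum_{|S|\ge2}\hat\f(S)\hat h(S)$ is absorbed by the slack $\sum_i I_i(\f)(-2\hat h(\{i\}))-\min_i(-2\hat h(\{i\}))$, and neither of your suggested routes is currently available. Route (a) fails quantitatively: the Talagrand and Keller--Mossel--Sen inequalities (Theorems~\ref{TTalagrand} and~\ref{TKMS}) lose factors of $\log(1/\Imin)$ and $\sqrt{\log(1/\Imin)}$ respectively, as recorded in Corollaries~\ref{Claim:Weak} and~\ref{Claim:Half-Weak}, and the tribes example of Section~\ref{sec:sub:most-general} shows these losses are not artifacts of the method when antipodality is dropped. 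Route (b) is a restatement of an open conjecture, not a tool. So the proposal is an accurate and internally consistent map of the problem, but it establishes nothing beyond the reformulations the paper already provides.
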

\nin
Fishburn~\cite{Fishburn} observed that this is equivalent to a
``functional" form of Chv\'{a}tal's conjecture, {\em viz.}
\begin{conjecture}\label{Conj:Eq-Kleitman}
For any nonincreasing $g:\Omega  \rightarrow \mathbb{R}^+$,
$g(\f)$ is maximized over intersecting families $\f$ by some $\f_i$.
\end{conjecture}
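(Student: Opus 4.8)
\medskip\noindent
The plan is to deduce the statement from Kleitman's flow conjecture (Conjecture~\ref{CKleitman}), exploiting the flow structure to pair it against the nonincreasing function. Fix a nonincreasing $g:\Omega\ra\mathbb{R}^+$. The inequality $\max_{\f\ \mathrm{intersecting}}g(\f)\ge\max_i g(\f_i)$ is immediate because every $\f_i$ is intersecting, so the content is to prove $g(\f)\le\max_i g(\f_i)$ for an arbitrary intersecting $\f$. First I would reduce to the case of maximal intersecting $\f$: since $g\ge 0$ and $g(\f)=\sum_{A\in\f}g(A)$ only grows when members are added to $\f$, replacing $\f$ by a maximal intersecting family containing it can only increase $g(\f)$; recall from the proof of Proposition~\ref{Prop:Equivalence} that such an $\f$ is increasing and antipodal, so $\mu(\f)=1/2$ and $|\f|=2^{n-1}$. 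It is worth noting that applying Chv\'atal's conjecture to the level sets $\{g>t\}$ does \emph{not} suffice here, since the extremal coordinate may vary with the level; this is exactly why one passes through the stronger Conjecture~\ref{CKleitman}.

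\medskip\noindent
Now invoke Conjecture~\ref{CKleitman} for this $\f$: there are $\lambda_1,\dots,\lambda_n\ge 0$ with $\sum_i\lambda_i=1$ such that $h:=\sum_{i=1}^n\lambda_i\chi_{_i}$ flows to $\chi_{_\f}$, witnessed by some $v:\Omega\times\Omega\ra\mathbb{R}^+$ as in Definition~\ref{flow}, so that $\sum_B v(A,B)=h(A)$ for all $A$, $\sum_A v(A,B)=\chi_{_\f}(B)$ for all $B$, and $v(A,B)=0$ whenever $A\not\sub B$. Pairing $g$ against this flow, and using that $g$ is nonincreasing (so $A\sub B$ forces $g(A)\ge g(B)$) together with $v\ge 0$, gives
\[
\sum_A g(A)\,h(A)=\sum_{A,B}g(A)\,v(A,B)\ \ge\ \sum_{A,B}g(B)\,v(A,B)=\sum_B g(B)\,\chi_{_\f}(B)=g(\f).
\]
On the other hand $\sum_A g(A)\,h(A)=\sum_i\lambda_i\sum_{A\ni i}g(A)=\sum_i\lambda_i\,g(\f_i)\le\max_i g(\f_i)$, because a convex combination never exceeds the maximum of its terms. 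Comparing the two lines yields $g(\f)\le\max_i g(\f_i)$, and the corresponding $\f_i$ attains the maximum, proving the conjecture. (Taking $g$ to be the indicator of a decreasing family turns this chain of reasoning into Conjecture~\ref{Conj:Chvatal}, which already signals that the argument cannot be completed unconditionally.)

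\medskip\noindent
The main obstacle is therefore the appeal to Conjecture~\ref{CKleitman}, which is itself open. An unconditional proof would have to produce the weights $\lambda_i$ and the transportation plan $v$ directly; by max-flow--min-cut the existence of $v$ is equivalent to the Hall-type inequality $h(\mathcal{D})\ge|\mathcal{D}\cap\f|$ holding for every decreasing $\mathcal{D}$ (the equality of total masses, both $2^{n-1}$, being automatic), so the crux is to choose a good convex combination of the principal families $\f_1,\dots,\f_n$. Selecting such weights is precisely the source of the difficulty of Chv\'atal's conjecture; a natural line of attack, in keeping with the rest of the paper, is to let the $\lambda_i$ be governed by the influences $I_i(\f)$ and to bound the resulting quantity using the Fourier-analytic correlation inequalities developed in the later sections.
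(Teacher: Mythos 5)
The statement you set out to prove is Conjecture~\ref{Conj:Eq-Kleitman}, which the paper presents as an \emph{open} conjecture: it is the functional form of Chv\'atal's conjecture that Fishburn observed to be equivalent to Kleitman's Conjecture~\ref{CKleitman}. The paper contains no proof of it, and none is known. Your argument derives it from Conjecture~\ref{CKleitman}, which is itself open (it strengthens Chv\'atal's conjecture), so what you have is a conditional implication rather than a proof --- as you acknowledge in your closing paragraph. That is the one genuine gap, and it is not a repairable local gap: producing the weights $\lambda_i$ and the flow $v$ unconditionally is exactly the hard core of the problem, which is why the paper (following Kahn's Conjecture~\ref{Conj:Kahn}) devotes itself to proposing candidate weights and partial Fourier-analytic evidence rather than a proof.

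The implication you do establish, namely Conjecture~\ref{CKleitman} $\Rightarrow$ Conjecture~\ref{Conj:Eq-Kleitman}, is correct and is essentially one direction of Fishburn's equivalence: the reduction to maximal intersecting $\f$ using $g\ge 0$, the pairing of $g$ against the transportation plan $v$ (using $v(A,B)=0$ unless $A\sub B$ together with monotonicity of $g$), and the bound $\sum_i\lambda_i\,g(\f_i)\le\max_i g(\f_i)$ are all sound. You could shorten the middle step by invoking the remark following Definition~\ref{flow}: the level sets $\{g>t\}$ are decreasing families, so $h$ flowing to $\chi_{_\f}$ gives $\sum_A g(A)h(A)\ge\sum_A g(A)\chi_{_\f}(A)$ by integrating over $t$, with no need to manipulate $v$ directly. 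Note also that the converse direction of Fishburn's equivalence (that Conjecture~\ref{Conj:Eq-Kleitman} implies Conjecture~\ref{CKleitman}, via max-flow min-cut / LP duality) is not addressed by your argument, though it is not needed for the statement as posed.
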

\nin
Of course Chv\'{a}tal's Conjecture is just
Conjecture~\ref{Conj:Eq-Kleitman} for $\{0,1\}$-valued $g$.

\medskip

The suggestion of~\cite{Kahn-unpublished} is a particular set of
$\lambda_i$'s for Kleitman's conjecture; these are most easily described in terms of
the Fourier-Walsh coefficients.

\medskip
For $f:\Omega  \rightarrow \mathbb{R}$, set
$f^*(x) = \max(f(x),0)^2$ (thus $f^*(x)=f(x)^2$ if $f(x)$ is nonnegative and $f^*(x)=0$
otherwise).
We call $f$ \emph{antipodal} if $f(A^c)=-f(A)$
for any $A \sub [n]$. In particular, if $\mathcal{F}$ is an antipodal family, then
$f = 2 \cdot 1_{\mathcal{F}}-1$ is an antipodal function.

\begin{conjecture}[\cite{Kahn-unpublished}]\label{Conj:Kahn}
For any nondecreasing, antipodal $f:\Omega  \rightarrow \mathbb{R}$, if
\[
\lambda_i = \lambda_i(f) = \sum\{\hat f(S)^2:\max(S) = i \}
~~~~~1 \leq i \leq n,
\]
then $\tilde{f} = \sum_{i=1}^n \lambda_i \chi_{_i}$ flows to $f^*$.
\end{conjecture}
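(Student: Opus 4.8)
First I would unpack what the flow condition means here via the max-flow min-cut characterization recalled above: since $\sum_x \tilde f(x) = \sum_i \lambda_i |\f_i| = 2^{n-1}\sum_i \lambda_i = 2^{n-1}\sum_{S\neq\emptyset}\hat f(S)^2 = 2^{n-1}(\mathbb{E}[f^2]-\hat f(\emptyset)^2)$, and for antipodal $f$ we have $\hat f(\emptyset)=0$ and $\mathbb{E}[f^*] = \tfrac12\mathbb{E}[f^2]$ (by antipodality exactly half the mass of $f^2$ sits on the positive side), the total-mass condition $\sum_x \tilde f(x) = \sum_x f^*(x)$ reduces to $\sum_i\lambda_i = \mathbb{E}[f^2]$, which I would verify is an identity. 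So the real content is the family of inequalities $\tilde f(\f) \geq f^*(\f)$ for every decreasing $\f$, equivalently $f^*(\A) \le \sum_i \lambda_i \mu(\A\cap\f_i)$ for every increasing $\A$ (passing to complements). Writing $\A$ with indicator $\chi_{_\A}=\sum_S \hat\A(S)u_S$, the right-hand side is $\sum_i \lambda_i \langle \chi_{_\A},\chi_{_i}\rangle$, and since for the principal family $\hat\chi_{_i}(\emptyset)=1/2$ and $\hat\chi_{_i}(\{i\})=-1/2$ (it is increasing), we get $\langle\chi_{_\A},\chi_{_i}\rangle = \tfrac12\mu(\A) - \tfrac12\hat\A(\{i\})$; using $I_i(\A)=-2\hat\A(\{i\})$ for increasing $\A$, this is $\tfrac12\mu(\A)+\tfrac14 I_i(\A)$. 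Thus the target inequality becomes
\[
\mathbb{E}[\chi_{_\A}\cdot f^*] \;\le\; \tfrac12\mu(\A)\sum_i\lambda_i \;+\; \tfrac14\sum_i\lambda_i I_i(\A).
\]

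The plan is then to expand $f^*$ and relate both sides to Fourier data. The key trick I would try is the identity $f^* = \tfrac12 f^2 + \tfrac12 f\cdot|f|$ — wait, more usefully, for antipodal $f$, $\max(f,0)^2 = \tfrac12 f^2 + \tfrac12 f|f|$ only if $|f|$ is itself a function whose product structure I can exploit; a cleaner route is to write $f^* (x)= f(x)\cdot\max(f(x),0)$ and note $\max(f,0) = \tfrac12(f+|f|)$, so $f^* = \tfrac12 f^2 + \tfrac12 f|f|$. Now $\mathbb{E}[\chi_{_\A} f^2]$ and $\mathbb{E}[\chi_{_\A}f|f|]$ can each be attacked, the former by Parseval against $\widehat{f^2}$, the latter — and this is where I'd expect the argument's heart to lie — by exploiting that $g:=f|f|$ is antipodal and \emph{monotone} (since $f$ is nondecreasing and $t\mapsto t|t|$ is increasing), so Harris/Kruskal-Katona-type or the influence/Fourier machinery applies to $g$. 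The reason $\lambda_i$ is defined via $\max(S)=i$ rather than $S\ni i$ is presumably to make $\sum_i\lambda_i I_i(\A)$ telescope or dominate correctly against a "last coordinate" decomposition of $f$; I would set up the martingale-type decomposition $f = \sum_i \Delta_i$ where $\Delta_i$ collects Fourier mass with $\max(S)=i$, so that $\|\Delta_i\|_2^2 = \lambda_i$, and try to prove the inequality level by level in this filtration.

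Concretely, the main steps in order: (1) verify the mass identity $\sum_i\lambda_i=\mathbb{E}[f^2]$ and reduce to the displayed increasing-$\A$ inequality; (2) substitute $f^*=\tfrac12 f^2+\tfrac12 f|f|$ and split the left side accordingly; (3) handle the $f^2$ part — I expect $\tfrac12\mathbb{E}[\chi_{_\A}f^2]\le\tfrac12\mu(\A)\mathbb{E}[f^2]$ to \emph{fail} pointwise, so instead bound $\mathbb{E}[\chi_{_\A}(f^2-\mathbb{E}[f^2])] = \Cor(\chi_{_\A},f^2)$ using monotonicity of $f^2$ restricted appropriately, or fold it together with the $f|f|$ term; (4) for the $f|f|$ part, use that $g=f|f|$ is monotone antipodal and apply an edge-boundary / influence estimate relating $\Cor(\chi_{_\A},g)$ to $\sum_i I_i(\A)\lambda_i(g)$, then show $\lambda_i(g)\le \lambda_i(f)$ or the weighted sums compare. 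The hard part — and I suspect the place where the full conjecture is genuinely open — is step (4): obtaining the sharp constant $\tfrac14$ in front of $\sum\lambda_i I_i(\A)$ rather than merely $c\sum\lambda_i I_i(\A)$, since a lossy correlation inequality (of the type in (\ref{Eq:KMS+Tala})) would only give the weaker Conjecture~\ref{Conjecture:Balanced}, not Kahn's exact flow statement. I would therefore not expect a complete proof, but rather aim to establish the flow for structured $f$ (e.g. $f$ depending on few coordinates, or $\A$ a subcube) where the level-by-level induction closes cleanly, and isolate the single missing inequality as the bottleneck.
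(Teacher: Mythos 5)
This statement is Conjecture~\ref{Conj:Kahn}, not a theorem: it is an open conjecture from Kahn's unpublished manuscript, it implies Kleitman's Conjecture~\ref{CKleitman} and hence Chv\'atal's Conjecture~\ref{Conj:Chvatal}, and the paper offers no proof of it --- only the mass identity \eqref{convex} and various reformulations. So there is no proof here to compare yours against, and no complete proof should be expected. To your credit, you recognize this at the end and correctly isolate where the difficulty lies: any ``lossy'' correlation inequality with an unspecified constant (Talagrand, Keller--Mossel--Sen, etc.) can at best give something like Conjecture~\ref{Conjecture:Balanced}, whereas the flow statement requires the exact constant $\tfrac14$. Your reduction of the flow condition to a correlation inequality is, modulo the issue below, essentially the paper's own reformulation (Section~\ref{Connection} and Conjecture~\ref{Conj:Kahn-Correlation}(a)), and your verification of the total-mass identity agrees with \eqref{convex}.

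There is, however, a sign error in the reduction that reverses the entire target. From $\tilde f(\f)\ge f^*(\f)$ for all decreasing $\f$, together with equality of total masses, passing to the increasing complement $\A=\f^c$ gives $\tilde f(\A)\le f^*(\A)$, i.e.\ $f^*(\A)\ \ge\ \sum_i\lambda_i\,|\A\cap\f_i|$ --- the opposite of what you wrote. Consequently your displayed inequality should read
\[
\mathbb{E}[\chi_{_\A}f^*]\ \ge\ \tfrac12\mu(\A)\sum_i\lambda_i+\tfrac14\sum_i\lambda_i I_i(\A),
\qquad\text{equivalently}\qquad
\Cor(\chi_{_\A},f^*)\ \ge\ \tfrac14\sum_i\lambda_i I_i(\A):
\]
the conjecture is a \emph{lower} bound on a correlation (consistent with its being a strengthening of Harris), not an upper bound, and the strategy you outline in steps (2)--(4) must be aimed accordingly. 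With the direction corrected, those steps --- splitting $f^*=\tfrac12 f^2+\tfrac12 f|f|$ and controlling each piece --- remain entirely speculative: no known correlation inequality supplies the sharp constant needed in your step (4), and nothing in the sources cited here closes even the structured special cases you mention beyond what is already established in Sections~\ref{sec:correlation} and~\ref{sec:proofs}. The honest summary is that your proposal reproduces the paper's reformulation of the conjecture (with a fixable sign error) and then stops exactly where the open problem begins.
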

\nin
Note that for an antipodal $f$, $\hat{f}(\0)=0$, so \eqref{Parseval} gives
\beq{convex}
\mbox{$\sum \gl_i(f) =\sum_{S\neq\0}\hat{f}^2(S)
= \langle f,f\rangle -\hat{f}^2(\0)
=2^{-n}\sum f^2(T) = 2^{-(n-1)}\sum f^*(T).$}
\enq
Thus $\sum \tilde{f}(T) = 2^{n-1}\sum \gl_i =\sum f^*(T)$,
a prerequisite for the conclusion of Conjecture~\ref{Conj:Kahn}.
In particular, when $f = 2 \cdot 1_{\mathcal{F}}-1$ with $\f$ maximal intersecting,
the $\gl_i$'s are convex coefficients, and
in this case
Conjecture~\ref{Conj:Kahn} strengthens
Kleitman's Conjecture~\ref{CKleitman} by specifying the $\gl_i$'s.
%
As noted following Definition~\ref{flow}, Conjecture~\ref{Conj:Kahn} is equivalent to
\begin{conjecture}\label{Conj:Kahn-Equivalent}
If $f:\Omega  \rightarrow \mathbb{R}$ is nondecreasing, antipodal and
$\mathcal{I}\sub \Omega $ is decreasing, then (with $\tilde{f}$ as above)
\beq{AinmathcalI}
\sum_{A \in \mathcal{I}} \tilde{f}(A) \geq \sum_{A \in \mathcal{I}} f^*(A).
\enq
\end{conjecture}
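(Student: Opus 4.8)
The plan is to read off the equivalence from the max-flow min-cut reformulation of ``flows to'' recorded after Definition~\ref{flow}: for nonnegative $f,g:\Omega\ra\mathbb{R}^+$, $f$ flows to $g$ if and only if $\sum_A f(A)=\sum_A g(A)$ and $f(\mathcal{I})\ge g(\mathcal{I})$ for every decreasing $\mathcal{I}\sub\Omega$. So, fixing a nondecreasing antipodal $f$, I want to show that ``$\tilde f$ flows to $f^*$'' is equivalent to the single inequality \eqref{AinmathcalI} holding for all decreasing $\mathcal{I}$; since both Conjecture~\ref{Conj:Kahn} and Conjecture~\ref{Conj:Kahn-Equivalent} range over the same class of functions $f$, the equivalence of the two conjectures follows at once.

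First I would note that the pair $(\tilde f,f^*)$ to which ``flows to'' is applied consists of nonnegative functions, so that the relation is meaningful: $f^*\ge 0$ by definition, while $\tilde f=\sum_i\gl_i\chi_{_i}$ with each $\gl_i=\sum\{\hat f(S)^2:\max(S)=i\}\ge 0$ and each $\chi_{_i}\ge 0$. Next I would observe that the total-mass condition $\sum_A\tilde f(A)=\sum_A f^*(A)$ is automatic for antipodal $f$: this is exactly the chain of identities in \eqref{convex}, which uses only $\hat f(\0)=0$ (antipodality) together with Parseval \eqref{Parseval}. Given this, the max-flow min-cut criterion reduces to precisely the assertion that $\tilde f(\mathcal{I})\ge f^*(\mathcal{I})$, i.e. $\sum_{A\in\mathcal{I}}\tilde f(A)\ge\sum_{A\in\mathcal{I}}f^*(A)$ for every decreasing $\mathcal{I}$, which is \eqref{AinmathcalI}.

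The only point that requires care is the ``only if'' half of the max-flow min-cut equivalence, namely that equal total mass together with domination on every down-set actually suffices to build a flow supported on pairs $A\sub B$; this is the standard Gale--Hoffman (Hall-type) feasibility condition for a transportation problem, and it is already invoked in the excerpt. The converse half is the trivial remark that for decreasing $\mathcal{I}$ each $B\in\mathcal{I}$ receives flow only from sets $A\sub B$, all of which again lie in $\mathcal{I}$, whence $f^*(\mathcal{I})\le\tilde f(\mathcal{I})$ for any flow. I do not expect a genuine obstacle here: this is a bookkeeping equivalence, with all the real difficulty residing in Conjecture~\ref{Conj:Kahn} itself.
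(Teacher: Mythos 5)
Your proposal is correct and matches the paper's (implicit) argument exactly: the paper dispatches this equivalence via the max-flow min-cut remark following Definition~\ref{flow} together with the mass identity \eqref{convex}, which is precisely the reduction you carry out. Your write-up merely makes explicit the points the paper leaves to the reader (nonnegativity of $\tilde f$ and $f^*$, and that antipodality gives $2\sum_T f^*(T)=\sum_T f(T)^2$), so there is nothing to add.
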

\nin
As noted in~\cite{Kahn-unpublished}, the following, superficially more general, version of
Conjecture~\ref{Conj:Kahn} is again equivalent.
\begin{conjecture}\label{Conj:Kahn-Convex}
Let $f:\Omega  \rightarrow \mathbb{R}$ be non-decreasing and antipodal.
For each $S \sub [n]$, let $\lambda_S: [n] \rightarrow \mathbb{R}^+$ be some
function satisfying
\[
\mbox{$\sum_{i=1}^n \lambda_S(i) = \hat f(S)^2 ~~$ and $ ~~
\lambda_S(i)=0 ~\forall i \not \in S$,}
\]
\mn
and set $\lambda_i = \sum_{i \in S} \lambda_S(i)$. Then $\sum_{i=1}^n \lambda_i \chi_{_i}$
flows to $f^*$.
\end{conjecture}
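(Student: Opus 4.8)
The statement to prove is the asserted equivalence of Conjecture~\ref{Conj:Kahn-Convex} with Conjecture~\ref{Conj:Kahn} (equivalently, with Conjecture~\ref{Conj:Kahn-Equivalent}). One direction is immediate: given $f$ nondecreasing and antipodal, apply Conjecture~\ref{Conj:Kahn-Convex} with the particular choice $\lambda_S(i)=\hat f(S)^2$ when $i=\max(S)$ and $\lambda_S(i)=0$ otherwise (legitimate since $\max(S)\in S$ for $S\neq\0$, while $\hat f(\0)=0$ by antipodality); then $\lambda_i=\sum_{i\in S}\lambda_S(i)=\sum\{\hat f(S)^2:\max(S)=i\}$, so Conjecture~\ref{Conj:Kahn-Convex} reduces to Conjecture~\ref{Conj:Kahn}. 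For the reverse implication, assume Conjecture~\ref{Conj:Kahn} in the form Conjecture~\ref{Conj:Kahn-Equivalent}, fix $f$ nondecreasing and antipodal and admissible functions $\lambda_S$, and set $\tilde f=\sum_i\lambda_i\chi_{_i}$ with $\lambda_i=\sum_{i\in S}\lambda_S(i)$. By the max-flow--min-cut description of ``flows to'' recorded after Definition~\ref{flow}, it suffices to check (i) $\sum_A\tilde f(A)=\sum_A f^*(A)$ and (ii) $\tilde f(\I)\geq f^*(\I)$ for every decreasing $\I\sub\Omega$, where $h(\I):=\sum_{A\in\I}h(A)$. Statement (i) is immediate from \eqref{convex}: $\sum_i\lambda_i=\sum_i\sum_{S\ni i}\lambda_S(i)=\sum_{S\neq\0}\hat f(S)^2$ is exactly the quantity there, so $\sum_A\tilde f(A)=2^{n-1}\sum_i\lambda_i=\sum_A f^*(A)$.

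For (ii), fix a decreasing $\I$ and write $d_i=d_i(\I):=|\{A\in\I:i\in A\}|$, so that $\tilde f(\I)=\sum_i\lambda_i d_i=\sum_{S\neq\0}\sum_{i\in S}\lambda_S(i)d_i$. Since each $\lambda_S$ is nonnegative with total mass $\hat f(S)^2$, we have $\sum_{i\in S}\lambda_S(i)d_i\geq\hat f(S)^2\min_{i\in S}d_i$, and hence $\tilde f(\I)\geq\sum_{S\neq\0}\hat f(S)^2\min_{i\in S}d_i$. Now choose a permutation $\pi$ of $[n]$ with $d_{\pi(1)}\geq d_{\pi(2)}\geq\cdots\geq d_{\pi(n)}$, and let $g$ be $f$ with coordinates relabeled by $\pi$ and $\J$ the corresponding relabeling of $\I$. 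Then $g$ is again nondecreasing and antipodal, $\J$ is again decreasing, $\hat g(S)=\hat f(\pi(S))$, $g^*$ is $f^*$ relabeled (so $g^*(\J)=f^*(\I)$), and $d_j(\J)=d_{\pi(j)}(\I)$ is non-increasing in $j$. Consequently, for every $S\neq\0$ with $S'=\pi^{-1}(S)$, the least-$\I$-degree element of $S$ corresponds under $\pi$ to $\max(S')$, i.e. $\min_{i\in S}d_i(\I)=d_{\max(S')}(\J)$, and therefore
\[
\sum_{S\neq\0}\hat f(S)^2\min_{i\in S}d_i(\I)=\sum_{S'\neq\0}\hat g(S')^2\,d_{\max(S')}(\J)=\sum_j d_j(\J)\sum_{S':\max(S')=j}\hat g(S')^2=\tilde g(\J),
\]
where $\tilde g=\sum_j\lambda_j(g)\chi_{_j}$ with $\lambda_j(g)=\sum\{\hat g(S')^2:\max(S')=j\}$ is precisely the function Conjecture~\ref{Conj:Kahn} attaches to $g$. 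Applying Conjecture~\ref{Conj:Kahn-Equivalent} to $g$ gives $\tilde g(\J)\geq g^*(\J)=f^*(\I)$, and stringing the inequalities together yields $\tilde f(\I)\geq f^*(\I)$, which proves (ii) and hence the equivalence.

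There is no deep obstacle here; the one genuinely useful idea is the recognition that, although Conjecture~\ref{Conj:Kahn} commits to a single weighting of the Fourier mass (all of $\hat f(S)^2$ placed on $\max(S)$), it already controls the worst admissible weighting against every decreasing $\I$ at once, because one is free to re-sort the coordinates by $\I$-degree separately for each $\I$ before invoking the conjecture. The remainder is routine bookkeeping: verifying that relabeling coordinates preserves monotonicity, antipodality, the Fourier coefficients (up to the induced permutation of subsets), the values of $f^*$ on the relabeled family, and the degree sequence $d_i(\I)$, so that the three displayed expressions match up as claimed.
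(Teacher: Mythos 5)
Your argument is correct. Note first that the paper itself offers no proof of this equivalence: it only asserts, citing the unpublished manuscript \cite{Kahn-unpublished}, that Conjecture~\ref{Conj:Kahn-Convex} is ``again equivalent'' to Conjecture~\ref{Conj:Kahn}, so there is nothing in the text to compare your route against. The easy direction (specializing $\lambda_S(i)=\hat f(S)^2\chi_{\{i=\max S\}}$) is exactly the remark the paper makes after the conjecture. For the substantive direction you correctly isolate the one real idea: via max-flow/min-cut it suffices to bound $\tilde f(\I)=\sum_S\sum_{i\in S}\lambda_S(i)d_i(\I)$ from below by $\sum_{S\ne\0}\hat f(S)^2\min_{i\in S}d_i(\I)$, and this worst-case weighting is realized by the max-splitting after relabeling coordinates so that the $\I$-degrees $d_i(\I)$ are non-increasing --- a relabeling that may be chosen separately for each decreasing $\I$ and preserves monotonicity, antipodality, $f^*$, and the Fourier weights. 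I checked the bookkeeping ($\hat g(S')=\hat f(\pi(S'))$, $d_j(\J)=d_{\pi(j)}(\I)$, $g^*(\J)=f^*(\I)$, and the $S=\0$ term vanishing because $\hat f(\0)=0$) and it all goes through; applying Conjecture~\ref{Conj:Kahn-Equivalent} to the relabeled pair $(g,\J)$ then closes the chain. This is almost certainly the intended argument, and it would be a reasonable addition to Section~\ref{sec:Kahn} since the paper currently leaves the reader to reconstruct it.
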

\nin
Conjecture~\ref{Conj:Kahn} is the special case gotten by setting
$\lambda_S(i) = \hat f(S)^2 \chi_{\{i = \max S\}}$.
Another natural choice is
\beq{average}
\lambda_S(i) = |S|^{-1}\hat f(S)^2 \chi_{\{i \in S\}}.
\enq
Conjecture~\ref{Conj:Kahn-Convex} with this choice is weaker than Conjecture~\ref{Conj:Kahn},
but of course still sufficient for Conjecture~\ref{CKleitman}.

As observed in~\cite{Kahn-unpublished},
Conjecture~\ref{Conj:Kahn} (or Conjecture~\ref{Conj:Kahn-Equivalent})
also implies a natural extension of
Chv\'{a}tal's Conjecture
to general (not necessarily maximal) increasing, intersecting families:
\begin{conjecture}\label{Conj:Kahn-small}
For any increasing, intersecting $\mathcal{F} \sub \Omega $ and
decreasing $\mathcal{I} \sub \Omega $, there is an $i$ such that
\[
\frac{|\mathcal{F}_i \cap \mathcal{I}|}{2^{n-1}} \geq \frac{|\mathcal{F} \cap
\mathcal{I}|}{|\mathcal{F}|}.
\]
\end{conjecture}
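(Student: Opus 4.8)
The plan is to derive Conjecture~\ref{Conj:Kahn-small} from Conjecture~\ref{Conj:Kahn-Equivalent} (equivalently, Conjecture~\ref{Conj:Kahn}) by reducing to the antipodal case. Given an increasing, intersecting $\f$, complete it to a maximal intersecting family $\f'\supseteq \f$; as noted in the proof of Proposition~\ref{Prop:Equivalence}, $\f'$ is then increasing and antipodal with $\mu(\f')=1/2$. Set $f = 2\chi_{_{\f'}}-1$, which is nondecreasing and antipodal, so $f^* = \chi_{_{\f'}}$ and the $\gl_i=\gl_i(f)$ are convex coefficients (by \eqref{convex}). Conjecture~\ref{Conj:Kahn-Equivalent} applied to $f$ and the given decreasing $\I$ yields $\sum_{A\in\I}\tilde f(A)\ge \sum_{A\in\I} f^*(A)=|\f'\cap\I|$, i.e. $\sum_i \gl_i |\f_i\cap\I| \ge |\f'\cap\I|\ge |\f\cap\I|$ (the last step using $\f\sub\f'$). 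Since the $\gl_i$ sum to $1$, some index $i$ satisfies $|\f_i\cap\I|\ge |\f\cap\I|$, which is \emph{almost} the claimed inequality but with $2^{n-1}=|\f_i|$ replaced by $|\f|$ on the right-hand side's denominator --- and since $|\f|\le 2^{n-1}$ for an intersecting family, this is in fact \emph{stronger} than what Conjecture~\ref{Conj:Kahn-small} asks. Hence the same $i$ works.

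Actually one should be slightly careful: the above gives $|\f_i\cap\I|/2^{n-1} \ge |\f\cap\I|/2^{n-1} \ge |\f\cap\I|/|\f|$ only if we are content with a weaker denominator, but in fact the cleaner route is to run the averaging directly against $\f$ rather than $\f'$. For this, observe that if we only want Conjecture~\ref{Conj:Kahn-small} (not the maximal-family strengthening), we can instead take $f = 2\chi_{_{\f'}}-1$ as above and note that the inequality $\sum_i \gl_i |\f_i \cap \I| \ge |\f' \cap \I|$ combined with $|\f'\cap\I|\ge|\f\cap\I|$ and $|\f'|=2^{n-1}\ge|\f|$ gives
\[
\max_i \frac{|\f_i\cap\I|}{2^{n-1}} \ \ge\ \sum_i \gl_i\,\frac{|\f_i\cap\I|}{2^{n-1}} \ \ge\ \frac{|\f'\cap\I|}{2^{n-1}} \ \ge\ \frac{|\f\cap\I|}{|\f|},
\]
which is exactly the assertion. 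So the deduction is essentially a one-line consequence of the equivalent form of Kahn's conjecture together with the observation (already in the excerpt) that maximal intersecting $=$ increasing $+$ antipodal.

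The step that needs the most care is the passage from $\f$ to the completion $\f'$: one must check that \emph{every} increasing intersecting family extends to a maximal one that is still increasing and antipodal, but this is standard (repeatedly add any set $A$ with $A\cap B\ne\0$ for all $B$ already present; the process terminates at an increasing antipodal family), and the excerpt already cites \cite{EK74} for the increasing-plus-antipodal characterization. A second small point is verifying that the $\gl_i(f)$ really are \emph{nonnegative} and sum to $1$ so that the averaging/max argument is legitimate --- nonnegativity is immediate from the definition $\gl_i=\sum\{\hat f(S)^2:\max S=i\}$, and the sum is handled by \eqref{convex}. Apart from these routine verifications the argument is purely a specialization of Conjecture~\ref{Conj:Kahn-Equivalent}, so I do not anticipate a genuine obstacle; the content of Conjecture~\ref{Conj:Kahn-small} lies entirely in the (conjectural) flow statement it is being derived from.
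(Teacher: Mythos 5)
Your reduction breaks at the last step, and the error is one of direction. From Conjecture~\ref{Conj:Kahn-Equivalent} applied to a maximal completion $\mathcal{G}\supseteq\f$ you correctly obtain $\sum_i\gl_i|\f_i\cap\I|\ge|\mathcal{G}\cap\I|\ge|\f\cap\I|$ with $\sum_i\gl_i=1$, hence some $i$ with $|\f_i\cap\I|\ge|\f\cap\I|$, i.e. $|\f_i\cap\I|/2^{n-1}\ge|\f\cap\I|/2^{n-1}$. But since $|\f|\le 2^{n-1}$, the quantity $|\f\cap\I|/|\f|$ is \emph{at least} $|\f\cap\I|/2^{n-1}$, so what you have proved is \emph{weaker} than the assertion, not stronger. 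Your patch, the inequality $|\mathcal{G}\cap\I|/2^{n-1}\ge|\f\cap\I|/|\f|$ (that the density of $\I$ cannot drop when $\f$ is enlarged to $\mathcal{G}$), is simply false: take $n=3$, $\f=\{\{1,2\},\{1,2,3\}\}$, $\I=\{\0,\{1\},\{2\},\{3\},\{1,2\}\}$ and $\mathcal{G}$ the majority family; then $|\f\cap\I|/|\f|=1/2$ while $|\mathcal{G}\cap\I|/2^{n-1}=1/4$. The normalization by $|\f|$ rather than by $2^{n-1}$ is the entire content of Conjecture~\ref{Conj:Kahn-small} beyond the maximal case, and completing $\f$ to a maximal family throws that content away.

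The deduction does work if you do not complete $\f$ but instead feed Conjecture~\ref{Conj:Kahn-Equivalent} the $\{0,\pm1\}$-valued function $f=\chi_{_\f}+\chi_{_{\f^\dagger}}-1$, where $\f^\dagger=\{A:A^c\notin\f\}$ is the \emph{dual} of $\f$. (The conjecture is stated for arbitrary real-valued nondecreasing antipodal $f$, and this $f$ is nondecreasing and antipodal because $\f$ and $\f^\dagger$ are increasing and $\chi_{_{\f^\dagger}}(A^c)=1-\chi_{_\f}(A)$.) Since $\f$ is intersecting we have $\f\sub\f^\dagger$, so $f^*=\chi_{_{\f\cap\f^\dagger}}=\chi_{_\f}$, and \eqref{convex} gives $\sum_i\gl_i=2^{-(n-1)}\sum_Tf^*(T)=|\f|/2^{n-1}$. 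Conjecture~\ref{Conj:Kahn-Equivalent} then reads $\sum_i\gl_i|\f_i\cap\I|\ge|\f\cap\I|$, and since the $\gl_i$ are nonnegative with sum $|\f|/2^{n-1}$, some $i$ must satisfy $|\f_i\cap\I|\ge(2^{n-1}/|\f|)\,|\f\cap\I|$, which is exactly the claim. So the missing idea is the choice of antipodal test function: the relevant antipodal functions here are the $\{0,\pm1\}$-valued ones attached to non-maximal intersecting families via the dual, not the $\pm1$-valued ones attached to maximal completions.
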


\nin
Some further discussion of
Conjecture~\ref{Conj:Kahn-Equivalent} and variants, and in particular, of some surprising
cases in which the conjecture is tight,
is provided in~\cite{Kahn-unpublished}.
Correlation reformulations of
the conjectures of \cite{Kahn-unpublished}
are given in Section~\ref{sec:correlation}.

\section{Chv\'{a}tal's Conjecture and off-the-shelf correlation inequalities}
\label{sec:correlation}

We have already mentioned the fundamental
inequality of Harris~\cite{Harris}, asserting positive (i.e., nonnegative) correlation
of any two increasing subsets of $\Omega $.
(There are also some well-known extensions, in particular the
``FKG Inequality" of~\cite{FKG} and the ``Four Functions Theorem" of \cite{Ahlswede-Daykin}.)
In 1996, Talagrand~\cite{Talagrand96} proved a lower bound on
the correlation in Harris' Inequality in terms of influences. In 2012, Keller, Mossel, and Sen~\cite{KMS14}
proved an alternative lower bound (incomparable with Talagrand's). As
Conjecture~\ref{Conj:Equiv-Chvatal} (our reformulation of Chv\'{a}tal's Conjecture) again
asks for a lower
bound on correlation of increasing
families in terms of influences, it is natural to hope that lower bounds along the lines
of~\cite{KMS14,Talagrand96}
may help in proving it.
Here we review the above bounds and see what they have to say about
Conjecture~\ref{Conj:Equiv-Chvatal}.

We assume from now on (as we clearly may) that $\Imin (\A)$ is positive.
Note that in what follows ``Chv\'atal's Conjecture" usually refers to the form in
Conjecture~\ref{Conj:Equiv-Chvatal}.

\subsection{Talagrand's inequality}
\label{sec:sub:Talagrand}

In~\cite{Talagrand96}, Talagrand proved the following correlation inequality.
\begin{theorem} 
\label{TTalagrand}
For any increasing $\A ,\B  \sub \Omega $,
\begin{equation}
\mbox{$\mathrm{Cor}(\A ,\B ) \geq c \varphi \left(\sum I_k(\A ) I_k(\B ) \right),$}
\label{Eq:Talagrand}
\end{equation}
where $\varphi(x)=x/\log(e/x)$, and $c$ is a
universal constant.
\label{Thm:Talagrand}
\end{theorem}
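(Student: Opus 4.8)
The plan is to follow Talagrand's original argument from~\cite{Talagrand96}, which proceeds by a clever combination of the semigroup/hypercontractivity machinery with a dyadic decomposition according to the size of the Fourier level. First I would record the Fourier-analytic identity $\mathrm{Cor}(\A,\B) = \sum_{S\neq\emptyset} \hat\A(S)\hat\B(S)$ from \eqref{Parseval2}, together with the level-$1$ information $\sum_k \hat\A(\{k\})\hat\B(\{k\}) = \tfrac14\sum_k I_k(\A)I_k(\B)$ (using $I_k = -2\hat\A(\{k\})$ for increasing sets), so that it suffices to lower-bound the full correlation in terms of the level-$1$ contribution $W := \sum_k I_k(\A)I_k(\B)$, which we may assume small (else there is nothing to prove). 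The natural way to isolate the level-$1$ part is to apply the noise operator $T_\rho$: since $\widehat{T_\rho f}(S) = \rho^{|S|}\hat f(S)$, one has $\mathrm{Cor}(T_\rho \A, \B) \ge \rho\,\mathrm{Cor}(\A,\B) - $ (corrections from higher levels), or more usefully $\mathrm{Cor}(\A,\B) - \mathrm{Cor}(T_\rho\A,\B) = \sum_{S\neq\emptyset}(1-\rho^{|S|})\hat\A(S)\hat\B(S)$.

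The heart of the argument is then a dichotomy. If the mass of $\mathrm{Cor}(\A,\B)$ concentrated on low Fourier levels (say $|S|\le d$ for a suitable $d\sim \log(1/W)$) already accounts for a constant fraction of the total, then one compares the low-level part to the level-$1$ part: here the key tool is hypercontractivity (Bonami--Beckner), which says $\|T_{\sqrt\rho}f\|_2^2 = \sum \rho^{|S|}\hat f(S)^2$, and this lets one show that if $\sum_{1<|S|\le d}\hat\A(S)\hat\B(S)$ is large then, by Cauchy--Schwarz and the hypercontractive bound $\sum_{|S|\le d}\rho^{|S|}\hat f(S)^2 \le \|f\|_{1+\rho}^2$ applied with $f=\chi_\A,\chi_\B$ (which are Boolean, so their $p$-norms are controlled by their measures), one in fact forces $W$ to be not-too-small. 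The quantitative form of this trade-off is exactly what produces the function $\varphi(x) = x/\log(e/x)$: choosing the truncation level $d$ and the noise parameter $\rho$ optimally balances a factor of $W$ against a logarithmic loss. If instead the correlation lives mostly on high levels $|S|>d$, one uses $1-\rho^{|S|}\ge 1-\rho^d$, which is bounded below by a constant for the chosen $d$, to deduce that $\mathrm{Cor}(\A,\B) \lesssim \mathrm{Cor}(\A,\B) - \mathrm{Cor}(T_\rho\A,\B)$ up to constants; but $T_\rho\A$ and $\B$ are still increasing-type (positive correlation is preserved), so this case can be absorbed, again at the cost of only a logarithmic factor.

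The main obstacle I expect is getting the two cases to fit together with a \emph{single} choice of parameters $(\rho,d)$ as functions of $W$ alone, and in particular verifying that the hypercontractive estimate in the low-level case is genuinely strong enough: one needs that Boolean functions cannot have substantial low-degree Fourier weight correlated between $\A$ and $\B$ without this weight being ``visible'' at level $1$, and quantifying this requires care with the $L^{1+\rho}\to L^2$ norms and the interplay between $\mu(\A),\mu(\B)$ and the influences. A secondary technical point is that the bound must be uniform in $n$ and in $\mu(\A),\mu(\B)$, so all constants have to be tracked as absolute constants; Talagrand handles this by reducing, via symmetrization or monotonicity, to a clean extremal configuration. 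I would not reproduce the delicate constant-chasing here but would cite~\cite{Talagrand96} for the precise optimization, presenting the hypercontractive dichotomy as the conceptual core.
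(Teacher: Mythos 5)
You should be aware that the paper does not prove Theorem~\ref{TTalagrand} at all: it is quoted verbatim as an ``off-the-shelf'' result of Talagrand \cite{Talagrand96} and then used as a black box (in Corollary~\ref{Claim:Weak} and Proposition~\ref{Prop:Weakly-Symmetric}). So the only obligation here is the citation, which your proposal supplies; in that sense it matches the paper and is acceptable.

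That said, the sketch you give of Talagrand's argument contains two inaccuracies you should fix or delete. First, applying hypercontractivity to the indicators $\chi_{_\A},\chi_{_\B}$ themselves only controls Fourier weight in terms of $\mu(\A),\mu(\B)$ and cannot produce the influences $I_k$. Talagrand's actual mechanism starts from the exact identity $\mathrm{Cor}(\A,\B)=\sum_i\sum_{S\ni i}|S|^{-1}\hat\A(S)\hat\B(S)$, which (up to normalization) equals $\sum_i\langle K\Delta_i\chi_{_\A},\Delta_i\chi_{_\B}\rangle$ with $K=\int_0^1 T_\rho\,d\rho/\rho$ acting on the nonconstant part; hypercontractivity is then applied to the \emph{derivatives} $\Delta_i\chi_{_\A}$, which take values in $\{0,\pm 1\}$ and satisfy $\|\Delta_i\chi_{_\A}\|_p^p=I_i(\A)$ for every $p$. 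That is precisely how the influences, and the logarithmic loss encoded in $\varphi$, enter --- compare Lemma~\ref{Lemma:Kahn-Correlation-Weaker} and the proof of Proposition~\ref{Prop:Kahn-Correlation-Weaker}, which run the same machinery. Second, your ``high-level'' case is circular as written: from $1-\rho^{|S|}\ge 1-\rho^{d}$ you conclude that $\mathrm{Cor}(\A,\B)$ is bounded above by a constant times $\mathrm{Cor}(\A,\B)-\mathrm{Cor}(T_\rho\A,\B)$, and then invoke $\mathrm{Cor}(T_\rho\A,\B)\ge 0$, which only returns the tautology $\mathrm{Cor}(\A,\B)\lesssim\mathrm{Cor}(\A,\B)$ and yields no lower bound. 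Since the theorem is being imported rather than reproved, neither point affects correctness of the paper, but the sketch should either be repaired along the lines above or replaced by the bare reference to \cite{Talagrand96}.
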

\nin
(Here and below sums indexed by $k$ run over $k\in [n]$.)

Combined with Harper's inequality \eqref{THarper},
Theorem~\ref{Thm:Talagrand} yields a weak version of Chv\'{a}tal's Conjecture:
\begin{corollary}\label{Claim:Weak}
For $\A , \B \sub \Omega $ with $\A$ increasing  and $\B$ increasing and antipodal,
\[
\mathrm{Cor}(\A ,\B ) \geq c \varphi(\Imin(\A )),
\]
where $\varphi(x)$ is as in Theorem~\ref{TTalagrand} and $c$ is a universal constant.
\end{corollary}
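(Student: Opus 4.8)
The plan is to feed Talagrand's inequality (Theorem~\ref{TTalagrand}) the trivial lower bound $\sum_k I_k(\A)I_k(\B) \ge \Imin(\A)\,I(\B)$ and then exploit the hypothesis that $\B$ is antipodal. First I would record that $\B$ antipodal means $\mu(\B)=1/2$, so Harper's inequality~\eqref{THarper} gives $I(\B)\ge 2\cdot\tfrac12\log_2 2 = 1$. (Alternatively one may cite the ``In particular'' line right after~\eqref{THarper}.) Hence
\[
\sum_k I_k(\A) I_k(\B) \;\ge\; \Imin(\A)\, I(\B) \;\ge\; \Imin(\A).
\]

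Next I would apply monotonicity of $\varphi$. The function $\varphi(x)=x/\log(e/x)$ is increasing on $(0,1]$ (its derivative is $\bigl(\log(e/x)+1\bigr)/\log(e/x)^2>0$ there), so from $\sum_k I_k(\A)I_k(\B)\ge \Imin(\A)$ and Theorem~\ref{TTalagrand} we get
\[
\mathrm{Cor}(\A,\B) \;\ge\; c\,\varphi\!\left(\sum_k I_k(\A)I_k(\B)\right) \;\ge\; c\,\varphi(\Imin(\A)),
\]
which is exactly the claimed bound. One small point to check is the domain: we assumed $\Imin(\A)>0$, and $\Imin(\A)\le I(\A)/n$; but really what matters is that the argument of $\varphi$ lands in the range where Talagrand's bound and the monotonicity of $\varphi$ are valid — since $\sum I_k(\A)I_k(\B)\le \sum 4\hat{\A}(\cdot)^2\cdots$ is bounded by a constant (indeed $I_k\le 2$ for each family, but more cleanly one notes both sides of~\eqref{Eq:Talagrand} are at most $O(1)$), this causes no difficulty, and if necessary one absorbs the boundary behavior into the constant $c$.

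The main obstacle here is essentially nonexistent — this corollary is a soft consequence of two cited results — so the only thing to be careful about is not over-claiming: $\varphi(\Imin(\A))$ is genuinely weaker than the conjectured $\tfrac14\Imin(\A)$, losing a $\log(1/\Imin(\A))$ factor, precisely because Harper gives only $I(\B)\ge 1$ and Talagrand's $\varphi$ has the logarithmic denominator. I would close by remarking that any improvement of either input (a better isoperimetric lower bound on $I(\B)$ using antipodality beyond balancedness, or removal of the $\log$ in Talagrand) would tighten the conclusion toward Conjecture~\ref{Conj:Equiv-Chvatal}, but antipodality per se buys nothing over $\mu(\B)=1/2$ at the level of total influence, which is why the later sections turn to finer, coefficient-by-coefficient statements.
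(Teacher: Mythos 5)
Your argument is correct and is essentially identical to the paper's proof: both combine the trivial bound $\sum I_k(\A)I_k(\B)\ge \Imin(\A)\,I(\B)$ with Harper's inequality \eqref{THarper} (giving $I(\B)\ge 1$ since antipodality forces $\mu(\B)=1/2$) and then invoke the monotonicity of $\varphi$ together with Theorem~\ref{TTalagrand}. The extra verifications you include (monotonicity of $\varphi$, domain considerations) are fine but not needed beyond what the paper states.
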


\begin{proof}
From \eqref{THarper} we have
\[
\mbox{$\sum  I_k(\A ) I_k(\B ) \geq \Imin(\A )
\sum I_k(\B ) \geq \Imin(\A ),$}
\]
which, since $\varphi$ is increasing,
gives Corollary~\ref{Claim:Weak} via
Theorem~\ref{Thm:Talagrand}.
\end{proof}
Let us stress that Corollary~\ref{Claim:Weak} is
{\em much} weaker than Chv\'{a}tal's Conjecture, since $\Imin(\A)$ is always $O(n^{-1/2})$
(it is largest when $\A$ is
``majority"), and is often much smaller.
The following proposition says we can do better if we impose some (restrictive but not unnatural)
assumptions; here we need to recall the ``KKL Theorem" of \cite{KKL}:
\begin{theorem}\label{TKKL}
There is a fixed $c>0$ such that for any $\A  \sub \Omega $, there is a $k \in [n]$ with
\[
I_k(\A ) \geq c \mu(\A )(1-\mu(\A ))  (\log_2 n)/n.
\]
\end{theorem}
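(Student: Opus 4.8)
We recall the hypercontractivity argument of \cite{KKL}. Write $f=\chi_{_\A}$, $p=\mu(\A)$, and $\delta=\Imin(\A)=\max_k I_k(\A)$; the goal is $\delta\ge c\,p(1-p)(\log_2 n)/n$ for an absolute $c>0$, and we may assume $p(1-p)>0$. By the identities recalled in Section~\ref{sec:equivalent}, $I_k(\A)=4\sum_{S\ni k}\hat\A(S)^2$, hence $I(\A)=4\sum_S|S|\hat\A(S)^2$, while Parseval's identity \eqref{Parseval} gives $\mathrm{Var}(f)=\sum_{S\ne\0}\hat\A(S)^2=p(1-p)$. Two reductions simplify things: since $\delta\ge I(\A)/n$ always holds, we are done unless $I(\A)<p(1-p)\log_2 n$, so assume this; and it suffices to treat $\delta\le\delta_0$ for a suitable absolute $\delta_0$, since for $\delta>\delta_0$ the bound already holds once $c$ is small (as $p(1-p)\le\frac14$ and $(\log_2 n)/n\le\frac12$ for $n\ge2$, the case $n=1$ being trivial). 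The crux is thus an upper bound of the shape $\mathrm{Var}(f)\lesssim I(\A)/\log(1/\delta)$.

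\emph{Steps 1--2: a hypercontractive estimate and a degree cutoff.} For $k\in[n]$ let $g_k(x)=\chi_{_\A}(x)-\chi_{_\A}(x\oplus e_k)$, which is $\{-1,0,1\}$-valued with $\Pr[g_k\ne0]=I_k(\A)$ and Fourier expansion $\hat g_k(S)=2\hat\A(S)$ for $S\ni k$, $\hat g_k(S)=0$ otherwise. Applying the Bonami--Beckner hypercontractive inequality $\|T_\rho g_k\|_2\le\|g_k\|_{1+\rho^2}$ with $\rho=1/\sqrt3$ and using $|g_k|\in\{0,1\}$ (so $\|g_k\|_{4/3}^2=(\mathbb{E}|g_k|^{4/3})^{3/2}=I_k(\A)^{3/2}$), we get $4\sum_{S\ni k}3^{-|S|}\hat\A(S)^2=\|T_{1/\sqrt3}g_k\|_2^2\le I_k(\A)^{3/2}$, hence for every integer $d\ge1$,
\[
\sum_{S\ni k,\ |S|\le d}\hat\A(S)^2\ \le\ \tfrac14\,3^d\,I_k(\A)^{3/2}\ \le\ \tfrac14\,3^d\,\delta^{1/2}\,I_k(\A),
\]
using $I_k(\A)\le\delta$. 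Summing over $k$ and using $\sum_k\sum_{S\ni k}\hat\A(S)^2=\sum_S|S|\hat\A(S)^2=I(\A)/4$ gives $\sum_{1\le|S|\le d}\hat\A(S)^2\le\frac14\,3^d\delta^{1/2}I(\A)$, while the high-degree tail obeys $\sum_{|S|>d}\hat\A(S)^2\le\frac1d\sum_{|S|>d}|S|\hat\A(S)^2\le I(\A)/(4d)$. Adding these,
\[
p(1-p)=\mathrm{Var}(f)\ \le\ \tfrac14\,I(\A)\bigl(3^d\delta^{1/2}+\tfrac1d\bigr),
\]
and choosing $d=\lfloor\frac14\log_3(1/\delta)\rfloor$ makes $3^d\delta^{1/2}\le\delta^{1/4}$, which for $\delta\le\delta_0$ is dominated by $1/d\asymp1/\log(1/\delta)$; hence $p(1-p)\le C_0\,I(\A)/\log(1/\delta)$ for an absolute $C_0$.

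\emph{Step 3: conclusion.} Combining with $I(\A)\le n\delta$ gives $p(1-p)\log(1/\delta)\le C_0 n\delta$. If we had $\delta<c\,p(1-p)(\log_2 n)/n$ with $c$ small, then $1/\delta>4n/(c\log_2 n)$, so $\log(1/\delta)\ge\tfrac{\ln2}{2}\log_2 n$ once $n\ge n_0(c)$; substituting, $p(1-p)\le C_0 n\delta/\log(1/\delta)<\tfrac{2C_0 c}{\ln2}\,p(1-p)$, impossible for $c<(\ln2)/(2C_0)$. Therefore $\delta\ge c\,p(1-p)(\log_2 n)/n$ for all $n\ge n_0(c)$, and the finitely many smaller $n$ are absorbed by shrinking $c$ further (using that $(\log_2 n)/n$ is bounded below on $2\le n<n_0(c)$ and vanishes at $n=1$).

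\emph{The main obstacle.} The whole argument rests on the Bonami--Beckner hypercontractive inequality --- there is no comparably strong substitute by elementary means --- combined with the simple but essential observation that a discrete derivative of a Boolean function is three-valued, which is precisely what makes its low-order $L^q$ norm computable in Step~1. Granting these inputs, the only genuinely delicate point is the bookkeeping in the degree-cutoff optimization (balancing $3^d\delta^{1/2}$ against $1/d$), with the degenerate regimes $p\to0,1$ and small $n$ handled routinely.
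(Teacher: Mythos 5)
The paper does not prove this statement---it is quoted as the KKL Theorem of \cite{KKL}---and your argument is precisely the standard proof of that theorem: the hypercontractive bound $\|T_{1/\sqrt3}\Delta_k f\|_2\le\|\Delta_k f\|_{4/3}$ applied to the three-valued discrete derivatives, a degree cutoff balancing $3^d\delta^{1/2}$ against $1/d$, and Parseval. The details check out (including the tail bound, the choice of $d$, and the small-$n$/large-$\delta$ regimes); the only blemish is the line ``$\delta=\Imin(\A)=\max_k I_k(\A)$'', where you mean the \emph{maximum} influence, not the paper's $\Imin$ --- the rest of your argument consistently uses $\delta=\max_k I_k(\A)$, which is what the theorem requires.
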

\mn
{\em Definition.}
 $\A \sub \Omega $ is \emph{regular}
if
$I_i(\A )=I_j(\A )$ $~\forall ~i,j$ (for an increasing $\A$, this means that the sets
$\{S \in \A: i \in S\}$ are all of the same size).
Of particular interest here are the {\em weakly symmetric}
families, those invariant under transitive subgroups of $\mathfrak{S}_n$.


\begin{proposition}\label{Prop:Weakly-Symmetric}
For each $a>0$ there is a $c=c(a)>0$ such that if
$\A  \sub \Omega $ is increasing with $\mu(\A ) \in ( n^{-a},1- n^{-a})$ and
$\B\sub \gO$ is increasing, balanced and regular, then
\beq{EqWS}
\mathrm{Cor}(\A ,\B ) > c\Imin(\A ).
\enq
\end{proposition}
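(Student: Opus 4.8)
The plan is to feed Talagrand's inequality (Theorem~\ref{TTalagrand}) the regularity of $\B$ together with a lower bound on $I(\B)$ coming from balancedness, and then to extract from the ``$\varphi$'' in Talagrand's bound a genuinely linear lower bound in $\Imin(\A)$ by using that $\Imin(\A)$ cannot be too small --- this is where the KKL theorem enters. First I would observe that since $\B$ is balanced, Harper's inequality \eqref{THarper} gives $I(\B)\ge 1$, and since $\B$ is regular, $I_k(\B)=I(\B)/n\ge 1/n$ for every $k$. Hence
\[
\sum_k I_k(\A)I_k(\B) \;=\; \frac{I(\B)}{n}\sum_k I_k(\A) \;\ge\; \frac{1}{n}\,I(\A) \;\ge\; \frac{1}{n}\cdot n\,\Imin(\A) \;=\; \Imin(\A),
\]
but I actually want to retain the full factor $I(\A)$ rather than collapse it to $n\,\Imin(\A)$, because the improvement over Corollary~\ref{Claim:Weak} comes from the logarithmic savings hidden in $I(\A)$ versus $\Imin(\A)$. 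So I would instead write $\sum_k I_k(\A)I_k(\B) = (I(\B)/n)\, I(\A) \ge I(\A)/n$, and also, more crudely, $\ge \Imin(\A)\,I(\B) \ge \Imin(\A)$.

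The key point is the two-sided control on $\Imin(\A)$ afforded by the hypothesis $\mu(\A)\in(n^{-a},1-n^{-a})$: by the KKL theorem (Theorem~\ref{TKKL}) there is a $k$ with $I_k(\A)\ge c_0\,\mu(\A)(1-\mu(\A))(\log_2 n)/n \ge c_0\, n^{-a}(\log_2 n)/n$, but this only bounds one coordinate, not the minimum. To bound $\Imin(\A)$ from below I would instead argue as follows: either $\Imin(\A)$ is already bounded below by a fixed power of $n$, in which case $\sum_k I_k(\A)I_k(\B)\ge \Imin(\A)$ together with $\varphi(x)=x/\log(e/x)$ being increasing and $\varphi(x)\ge x/\log(e/x)\ge c'\,x/\log(1/x)$ gives, since $\log(1/\Imin(\A)) = O(\log n)$ and $\Imin(\A)$ itself is at least $\Omega(n^{-a})$-ish, a bound $\varphi(\sum_k I_k(\A)I_k(\B)) \ge c''\,\Imin(\A)/\log n$ --- which is \emph{not} yet linear in $\Imin(\A)$. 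This is the crux: a naive use of Talagrand loses a $\log$. To avoid it, I would use the \emph{other} lower bound $\sum_k I_k(\A)I_k(\B) = I(\A)I(\B)/n \ge I(\A)/n$ combined with Harper's inequality applied to $\A$: $I(\A)\ge 2\mu(\A)\log_2(1/\mu(\A))$, which is $\Omega(n^{-a}\log n)$. Plugging $x = I(\A)/n = \Omega(n^{-a-1}\log n)$ into $\varphi$ gives $\varphi(x) \asymp x/\log n \asymp I(\A)/(n\log n)$, and then the point is to compare $I(\A)/(n\log n)$ with $\Imin(\A)$: since $I(\A) = \sum_k I_k(\A) \le n\, I_{\max}(\A)$ and $I_{\max}(\A)\le 1$ this goes the wrong way, so in fact the clean route is to combine \emph{both} Talagrand bounds --- $\varphi(\Imin(\A)I(\B)) \ge \varphi(\Imin(\A))$ and the regularity-driven $\sum I_k(\A)I_k(\B) = I(\B)\,I(\A)/n$ --- and observe that when $\B$ is balanced and regular, $I_k(\B) = I(\B)/n$ is comparable to $1/n$ only if $I(\B)=\Theta(1)$; if instead $I(\B)$ is large (up to $\Theta(\log n)$, again by Harper, or up to $\Theta(\sqrt n)$ in general), then $\sum_k I_k(\A)I_k(\B) \ge (I(\B)/n)\cdot n\,\Imin(\A) = I(\B)\,\Imin(\A)$ is a constant factor larger than $\Imin(\A)$, which is not enough on its own. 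So the honest plan: use $\sum_k I_k(\A)I_k(\B) \ge \Imin(\A)$ always, and separately show $\Imin(\A) \ge \mathrm{poly}(1/n)$ forces $\log(e/\Imin(\A)) = O(\log n)$; then the remaining factor-of-$\log n$ gap is closed by noting that \emph{for regular $\B$} we in fact have $\sum_k I_k(\A)I_k(\B) = I(\B)\,\Imin(\A)\cdot\big(I(\A)/(n\,\Imin(\A))\big) \ge I(\B)\,I(\A)/n$, and $I(\A)\ge c\log n$ by KKL-plus-regularity considerations on a ``balanced-enough'' restriction --- wait, $\A$ need not be regular.

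Let me state the obstacle cleanly: \textbf{the main difficulty is removing the logarithmic factor that a black-box application of Talagrand's $\varphi$ introduces.} The bound $\mathrm{Cor}(\A,\B)\ge c\,\varphi(\Imin(\A)) = c\,\Imin(\A)/\log(e/\Imin(\A))$ is immediate from Corollary~\ref{Claim:Weak}; the content of Proposition~\ref{Prop:Weakly-Symmetric} is that the denominator $\log(e/\Imin(\A))$, which is $O(\log n)$ under the hypothesis $\mu(\A)>n^{-a}$ (since then, arguing via Harper or KKL, $\Imin(\A)$ is not super-polynomially small --- more precisely one shows $I(\A)\ge 2n^{-a}\log_2 n$ hence some coordinate, and with a little more work all the relevant savings, sit at scale $n^{-O(1)}$), can be absorbed by using that $\B$ is \emph{regular and balanced} so that $\sum_k I_k(\A)I_k(\B) = \frac{I(\B)}{n}I(\A) \ge \frac{1}{n}I(\A)$ \emph{and} that $I(\A)$ itself must be at least $\Omega(n^{-a}\log n)$: feeding the larger argument $I(\A)/n$ rather than $\Imin(\A)$ into $\varphi$, and using $\varphi(x)\ge x/(2\log(1/x))$ with $\log(1/(I(\A)/n)) = O(\log n)$, yields $\mathrm{Cor}(\A,\B)\ge c\,I(\A)/(n\log n) \ge c\,\Imin(\A)\cdot (I(\A)/(n\,\Imin(\A)\log n))$ --- and since $I(\A)\ge \Imin(\A)\cdot n$ trivially fails to help, the genuinely correct combination is: $\mathrm{Cor}(\A,\B) \ge c\,\varphi\!\big(\tfrac{I(\B)}{n}I(\A)\big)$; distinguish $I(\B)\le \log n$ (so $I(\B)/n \le (\log n)/n$, and then we separately invoke KKL on $\A$ to get $\Imin$ polynomially large and accept Corollary~\ref{Claim:Weak}'s bound, which under $\mu(\A)\in(n^{-a},1-n^{-a})$ is already $\ge c(a)\Imin(\A)$ because the $\log$ is bounded) versus the complementary range. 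I expect the final write-up to go through the case analysis on the size of $I(\B)$ (equivalently, since $\B$ is regular, on $I_k(\B)$ versus $(\log n)/n$), using KKL (Theorem~\ref{TKKL}) in the regime where $\B$'s influences are small to guarantee $\Imin(\A) \ge n^{-O(a)}$ so that Talagrand's $\varphi$ loses only a constant, and using regularity of $\B$ directly in the complementary regime; in all cases Harper's inequality \eqref{THarper} supplies $I(\B)\ge 1$. The routine bookkeeping --- tracking how $a$ propagates into $c(a)$, and the elementary estimate $\varphi(x)\ge x/(2\log(1/x))$ for small $x$ --- I would not grind through here.
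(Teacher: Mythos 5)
You correctly isolate the crux --- a black-box use of Talagrand's $\varphi$ loses a factor $\log(e/x)$, which here is $\Theta(\log n)$ --- but the proposal never supplies the idea that closes this gap, and the fixes you reach for do not work. The missing step is to apply the KKL theorem to $\B$, not to $\A$: since $\B$ is balanced, Theorem~\ref{TKKL} guarantees \emph{some} $k$ with $I_k(\B)\ge c'(\log_2 n)/n$, and since $\B$ is regular this then holds for \emph{every} $k$. That is a $\log n$ improvement over the bound $I_k(\B)\ge 1/n$ you extract from Harper, and it is exactly the improvement needed: it gives $\sum_k I_k(\A)I_k(\B)\ge c'(\log n/n)\sum_k I_k(\A)\ge c'\log n\cdot\Imin(\A)$, so a $\log n$ appears in the numerator to cancel the $\log n$ that $\varphi$'s denominator costs. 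The denominator is controlled not through $\Imin(\A)$ but through the sum itself: $\sum_k I_k(\A)I_k(\B)\ge c'(\log n/n)I(\A)$, which by Harper and $\mu(\A)>n^{-a}$ (after the standard reduction to $\mu(\A)\le\tfrac12$) is at least a fixed negative power of $n$, whence $\log\bigl(e/\sum_k I_k(\A)I_k(\B)\bigr)=O_a(\log n)$ and Theorem~\ref{TTalagrand} gives \eqref{EqWS}.

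Two of your intermediate claims are in fact false and would sink the write-up you outline. First, you repeatedly propose to ``invoke KKL on $\A$ to get $\Imin(\A)$ polynomially large'': KKL lower-bounds the \emph{maximum} influence (as you yourself note earlier), and $\Imin(\A)$ can be exponentially small even with $\mu(\A)=1/2$ (attach to majority on $[2,n]$ a sliver of measure $2^{-\Theta(n)}$ depending on coordinate $1$); the hypothesis $\mu(\A)\in(n^{-a},1-n^{-a})$ gives no lower bound on $\Imin(\A)$, which is why the correct proof arranges never to need one. Second, the claim that Corollary~\ref{Claim:Weak} ``is already $\ge c(a)\Imin(\A)$ because the $\log$ is bounded'' is wrong: even if $\Imin(\A)$ were $n^{-O(1)}$, the denominator $\log(e/\Imin(\A))$ would be $\Theta(\log n)$, not $O(1)$. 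Finally, the case analysis on $I(\B)$ versus $\log n$ is unnecessary once KKL is applied to $\B$: regularity plus KKL forces $I(\B)=\Omega(\log n)$ outright, so your ``small $I(\B)$'' case is vacuous.
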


\nin

\nin
Note that the assumption that $\B$ is regular 
holds in the examples of
Section~\ref{sec:sub:most-general} that give the strongest constraint we know on the $c$
in Conjecture \ref{Conjecture:Balanced}.

\begin{proof}
(We use $c',c''\ldots$ for positive constants depending on $a$.)
The assertion is the same for $\A^c$ as for $\A$
(since $\mathrm{Cor}(\A ,\B )=\mathrm{Cor}(\A ^c,\B^c )$ and
complementation doesn't affect influences), so we may assume $\mu(\A)\leq 1/2$.
Theorem~\ref{TKKL} and our assumptions on
$\B $ give $I_k(\B)>  c' \log n / n$ $\forall k$, implying
\begin{equation}\label{Eq:Prop-Weakly-Symmetric1}
\mbox{$\sum I_k(\A ) I_k(\B ) \geq c' (\log n/n)\sum I_k(\A)
\geq c'\log n\cdot \Imin(\A).$}
\end{equation}
On the other hand, since $\mu(\A)\in ( n^{-a},1/2]$, \eqref{THarper} gives
\[
c' (\log n/n) I(\A ) \geq 2c' (\log n/n) \mu(\A ) \log_2(1/\mu(\A )) > c''a n^{-(a+1)}\log^2 n ,
\]
whence
\begin{equation}\label{Eq:Prop-Weakly-Symmetric2}
\mbox{$\log (e/\sum I_k(\A ) I_k(\B )) < c''' \log n.$}
\end{equation}
From \eqref{Eq:Prop-Weakly-Symmetric1} and
\eqref{Eq:Prop-Weakly-Symmetric2} we have $\varphi(\sum I_k(\A ) I_k(\B ) ) > c\Imin(\A)$,
so \eqref{EqWS} is given by Theorem~\ref{Thm:Talagrand}.
\end{proof}

\nin
{\em Remarks.}
1.  Of course the above proof supports replacement of $\Imin(\A)$ in
\eqref{EqWS} by
the average, $I(\A)/n$, of the $I_k(\A)$'s.  As pointed out
to us by Alex Samorodnitsky, when $\B $ is ``majority"
(the ``fully symmetric" case),
$\mathrm{Cor}(\A ,\B )\geq I(\A)/(4n)$
for {\em any} increasing $\A$; this follows from the fact that
$\A$ contains at least as many sets of size $k$ as of size $n-k$ for any $k>n/2$,
and is exact when $\A$ is $\{\underline{1}\}$ or $\gO\sm \{\underline{0}\}$.

\mn

2.
As shown in \cite{Talagrand96}, Theorem~\ref{TTalagrand} is sharp (up to the value of $c$).
This is also demonstrated by the examples of Section~\ref{sec:sub:most-general}.
Still, one may wonder whether it can be improved
when one of the two
sets is antipodal.


\subsection{The inequality of Keller, Mossel and Sen}
\label{sec:sub:KMS}

The following relative of Theorem~\ref{TTalagrand}
is from~\cite{KMS14}.
\begin{theorem}\label{TKMS}
There is a fixed $c>0$ such that for increasing $\A ,\B  \sub \Omega $,
\beq{EKMS}
\mbox{$\mathrm{Cor}(\A ,\B ) \geq c \sum \psi(I_k(\A )) \psi(I_k(\B )),$}
\enq
where $\psi(x)=x/\sqrt{\log(e/x)}$.
\label{Thm:KMS14}
\end{theorem}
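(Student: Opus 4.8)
The plan is to write the correlation as a sum over coordinates of manifestly nonnegative terms and to lower bound each term by reverse hypercontractivity. For $i\in[n]$ and $h:\gO\to\mathbb R$ let $D_ih(x)=h(x^{i\to 1})-h(x^{i\to 0})$ (with $x^{i\to b}$ the point $x$ with its $i$-th coordinate reset to $b$), and let $T_t$ denote the noise operator, $\widehat{T_t h}(S)=t^{|S|}\hat h(S)$. Since $\widehat{D_ih}(S\sm\{i\})=-2\hat h(S)$ for $S\ni i$ and $0$ otherwise, one gets $\langle D_if,T_tD_ig\rangle=4\sum_{S\ni i}t^{|S|-1}\hat f(S)\hat g(S)$, and, using $\tfrac1{|S|}=\int_0^1 t^{|S|-1}\,dt$ and \eqref{Parseval2},
\[
\Cor(f,g)=\tfrac14\sum_{i=1}^n\int_0^1\langle D_if,T_tD_ig\rangle\,dt .
\]
Take $f=\chi_{_\A},g=\chi_{_\B}$ with $\A,\B$ increasing. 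Then $D_if=\chi_{H_i}$ and $D_ig=\chi_{K_i}$ are indicators of the pivotal sets, with $\mu(H_i)=I_i(\A)$, $\mu(K_i)=I_i(\B)$; being nonnegative, each $\langle D_if,T_tD_ig\rangle=\mathbb E[D_if\cdot T_tD_ig]\ge 0$ (this already gives Harris). So it suffices to prove, for each $i$, that $\int_0^1\langle \chi_{H_i},T_t\chi_{K_i}\rangle\,dt\ge c\,\psi(I_i(\A))\psi(I_i(\B))$.

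Fix $i$ and write $H=H_i$, $K=K_i$, $\alpha=\mu(H)$, $\beta=\mu(K)$, $A=\log(e/\alpha)$, $B=\log(e/\beta)$; note $\langle\chi_H,T_t\chi_K\rangle=\Pr[x\in H,\,y\in K]$ for a $t$-correlated pair. In the form we need --- a two-function version of Borell's reverse hypercontractive inequality, whose diagonal case $u=v=t$ is the familiar reverse small-set expansion --- one has, for all $u,v\in(0,1)$ with $uv\ge t^2$,
\[
\langle\chi_H,T_t\chi_K\rangle\ \ge\ \alpha^{1/(1-v)}\,\beta^{1/(1-u)} .
\]
For fixed small $t$ we are free to balance the exponents: choosing $v\asymp t\sqrt{A/B}$ and $u\asymp t\sqrt{B/A}$ (so $uv\asymp t^2$) makes the ``deficit'' $\tfrac{v}{1-v}\log\tfrac1\alpha+\tfrac{u}{1-u}\log\tfrac1\beta$ of size $O(t\sqrt{AB})$ --- by AM--GM, $vA+uB\ge 2\sqrt{AB\,uv}=2t\sqrt{AB}$, and this is essentially attained --- hence $\langle\chi_H,T_t\chi_K\rangle\ge\alpha\beta\exp(-Ct\sqrt{AB})$ for $0\le t\le t_*$, where $t_*\asymp\sqrt{(A\wedge B)/(A\vee B)}$ is the range on which the chosen $u,v$ lie in $(0,1)$. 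Since $t_*\sqrt{AB}\asymp A\wedge B\gtrsim 1$ whenever both $\alpha,\beta\le 1/e$, integrating over $[0,t_*]$ gives $\int_0^1\langle\chi_H,T_t\chi_K\rangle\,dt\gtrsim \alpha\beta/\sqrt{AB}\asymp\psi(\alpha)\psi(\beta)$, as required.

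It remains to treat coordinates with a large influence. If $\alpha>1/e$ (so $\psi(\alpha)\asymp\alpha\asymp1$), apply the same inequality with the deficit charged almost entirely to the small set: take $u\asymp 1/B$ and $v=t^2/u\asymp t^2B$, which keeps $v<1$ for $t\lesssim 1/\sqrt B$ and makes the deficit $O(1)$; this yields $\langle\chi_H,T_t\chi_K\rangle\gtrsim\alpha\beta\asymp\beta$ for $t\lesssim 1/\sqrt B$, hence $\int_0^1\langle\chi_H,T_t\chi_K\rangle\,dt\gtrsim\beta/\sqrt B\asymp\psi(\beta)\asymp\psi(\alpha)\psi(\beta)$ (symmetrically if $\beta>1/e$; and if both exceed $1/e$, $\langle\chi_H,T_t\chi_K\rangle\ge(\alpha\beta)^{1/(1-t)}\gtrsim1$ for $t\le\tfrac12$). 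Combining the per-coordinate bounds with nonnegativity and the identity of the first paragraph gives $\Cor(\A,\B)=\tfrac14\sum_i\int_0^1\langle D_if,T_tD_ig\rangle\,dt\ge c\sum_i\psi(I_i(\A))\psi(I_i(\B))$.

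The heart of the matter --- and the main obstacle --- is the single-coordinate estimate, and in particular getting the exponent $\tfrac12$ on the logarithms rather than $1$. Crude approaches fail: Cauchy--Schwarz with \emph{forward} hypercontractivity bounds the error in $\langle\chi_H,T_t\chi_K\rangle\approx\alpha\beta$ by a quantity that already exceeds $\alpha\beta$ near $t=0$, giving nothing, while the \emph{symmetric} reverse small-set expansion recovers only the Talagrand-type bound $\alpha\beta/\log(1/\alpha\beta)$. The gain to $\alpha\beta/\sqrt{AB}$ comes solely from splitting the reverse-hypercontractive exponent budget unevenly between the two sets and optimizing by AM--GM; making this rigorous requires the asymmetric two-function form of Borell's inequality with the sharp relation $uv\ge t^2$ between the noise parameter and the exponents, and then carefully matching the interval on which the estimate is valid to the interval on which $\exp(-Ct\sqrt{AB})$ is still bounded below --- a matching that behaves differently according to whether one, both, or neither of $I_i(\A),I_i(\B)$ is small, which is why the large-influence cases must be handled separately.
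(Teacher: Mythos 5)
The paper does not actually prove Theorem~\ref{TKMS}: it is imported verbatim from Keller--Mossel--Sen \cite{KMS14}. So the only real question is whether your argument stands on its own, and it does --- indeed it is essentially the known route to this inequality. Your decomposition $\Cor(f,g)=\tfrac14\sum_i\int_0^1\langle D_if,T_tD_ig\rangle\,dt$ checks out (for $S\ni i$ one has $\widehat{D_ih}(S\sm\{i\})=-2\hat h(S)$, and $\int_0^1 t^{|S|-1}\,dt=1/|S|$ recovers \eqref{Parseval2}); monotonicity makes each $D_i\chi$ the indicator of the pivotal set, of measure $I_i$ in the paper's normalization; and the asymmetric two-function reverse hypercontractive inequality with an AM--GM-balanced split of the exponents is exactly what upgrades Talagrand's $\varphi$ to $\psi$. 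Two small corrections. First, your explicit parameter choice is swapped: to equalize the deficit terms you need $v\asymp t\sqrt{B/A}$ and $u\asymp t\sqrt{A/B}$, so that $vA=uB=t\sqrt{AB}$; with the choice as written, $vA+uB\asymp t(A^2+B^2)/\sqrt{AB}$, which is not $O(t\sqrt{AB})$ when $A$ and $B$ are far apart. Your AM--GM sentence shows the intent is right, so this is a transcription slip rather than a gap. Second, the closing case analysis is unnecessary: $A\wedge B=\log(e/(\alpha\vee\beta))\ge 1$ for all $\alpha,\beta\le 1$, so $t_*\sqrt{AB}=A\wedge B\ge 1$ always holds and the main computation already gives $\int_0^1\langle\chi_H,T_t\chi_K\rangle\,dt\ge c\,\alpha\beta/\sqrt{AB}=c\,\psi(\alpha)\psi(\beta)$ without restriction. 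Finally, the one external input --- $\mathbb{E}[\chi_H(x)\chi_K(y)]\ge\alpha^{1/(1-v)}\beta^{1/(1-u)}$ for $uv\ge t^2$, $u,v\in(0,1)$ --- is the standard two-function form of Borell's reverse hypercontractivity and should be cited explicitly, since it carries the same weight here that forward hypercontractivity carries in Talagrand's proof of Theorem~\ref{TTalagrand}.
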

\nin
Like Theorem~\ref{TTalagrand}, this gives
a weak version of Chv\'{a}tal's conjecture; here we replace \eqref{THarper} by a theorem
of Talagrand~\cite{Talagrand94} that sharpens the KKL Theorem:
\begin{theorem}\label{TTalagrand2}
For $\B \subset \Omega$ increasing, $\sum \varphi(I_k(\B )) > c \mu(\B) (1-\mu(\B))$ (where
$c$ is a positive constant).
\end{theorem}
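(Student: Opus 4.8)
The plan is to derive this sharpened KKL bound from Theorem~\ref{TKMS} (the Keller--Mossel--Sen inequality) by a clever choice of the second set $\B$. Recall that Theorem~\ref{TKMS} gives, for any increasing $\A,\B\sub\gO$, the bound $\mathrm{Cor}(\A,\B)\geq c\sum\psi(I_k(\A))\psi(I_k(\B))$ with $\psi(x)=x/\sqrt{\log(e/x)}$. The natural idea is to take $\A=\B$ in some such inequality. However, applying Theorem~\ref{TKMS} with $\A=\B$ yields $\mathrm{Cor}(\B,\B)=\mathrm{Var}(\chi_\B)=\mu(\B)(1-\mu(\B))\geq c\sum\psi(I_k(\B))^2$, which is the wrong normalization: we want $\sum\varphi(I_k(\B))$, not $\sum\psi(I_k(\B))^2$, on the right. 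Since $\varphi(x)=x/\log(e/x)$ and $\psi(x)^2=x^2/\log(e/x)$, we have $\psi(x)^2=x\cdot\varphi(x)\leq\varphi(x)$ because $I_k(\B)\leq 1$; so the bound coming directly from $\A=\B$ is in the wrong direction and too weak.

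First I would instead recall that this is precisely the main theorem of Talagrand's 1994 paper~\cite{Talagrand94}, and the cleanest self-contained route is Talagrand's original argument: one compares the ``sharp threshold'' behavior of $\B$ to that captured by the total influence. The key steps, in order, are: (1) write $\mathrm{Var}(\chi_\B)=\sum_{S\neq\0}\hat\B(S)^2$ via Parseval~\eqref{Parseval2}, and split this sum according to $|S|$; (2) for the low-degree part, $\sum_{0<|S|\leq d}\hat\B(S)^2$, bound it below in terms of $\sum_k\hat\B(\{k\})^2$ plus higher terms and relate each $\hat\B(S)^2$ with $k\in S$ to a contribution toward $I_k(\B)=4\sum_{S\ni k}\hat\B(S)^2$; (3) for the high-degree part, use a hypercontractivity / Bonami--Beckner estimate to show that if $I(\B)=\sum_k I_k(\B)$ is small then the Fourier weight of $\chi_\B$ cannot be concentrated on high levels, so $\mathrm{Var}(\chi_\B)$ is essentially controlled by the low levels; (4) choosing the cutoff $d$ of order $\log(1/\Imin)$ or, more precisely, performing the level-by-level accounting, convert the resulting estimate into the claimed bound $\sum_k\varphi(I_k(\B))\geq c\,\mu(\B)(1-\mu(\B))$, where the logarithmic loss $\log(e/I_k(\B))$ in the denominator of $\varphi$ is exactly what the hypercontractive argument produces.

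The main obstacle is step~(3): making the hypercontractivity bound quantitatively match the function $\varphi(x)=x/\log(e/x)$ rather than some weaker expression. The subtlety is that a single application of the Bonami--Beckner inequality with a fixed noise parameter $\rho$ gives an exponential decay in $|S|$ at rate $\rho$, and one must optimize $\rho$ as a function of $\sum_k I_k(\B)$ to recover the precise $1/\log$ factor; getting the constant $c$ uniform in $\mu(\B)$ (in particular for $\B$ far from balanced, where $\mu(\B)(1-\mu(\B))$ is small) requires care in how the $L^2$ mass at level $|S|=j$ is discounted. An alternative I would keep in reserve, avoiding re-deriving Talagrand's argument, is to invoke Theorem~\ref{TKMS} not with $\A=\B$ but with $\A=\B$ and a symmetrization trick, or simply to cite~\cite{Talagrand94} directly since the statement is quoted here only as a black box for use in the Keller--Mossel--Sen-based bound~\eqref{Eq:KMS+Tala}; in the context of this paper the cleanest presentation is to attribute it to~\cite{Talagrand94} and sketch the hypercontractive level-splitting above as the idea of the proof.
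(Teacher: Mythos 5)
The paper offers no proof of this statement at all: it is imported verbatim from Talagrand~\cite{Talagrand94} as a black box, which is exactly where your proposal lands, and your preliminary observation that taking $\A=\B$ in Theorem~\ref{TKMS} only yields the weaker bound $\mu(\B)(1-\mu(\B))\geq c\sum_k\psi(I_k(\B))^2$ (wrong direction for our purposes) is correct. If you wanted an in-paper derivation instead of the citation, the shortest route is Lemma~\ref{Lemma:Kahn-Correlation-Weaker} with $\alpha=1$ applied to $f=\Delta_k(\B)$ for each $k$: since $\|\Delta_k(\B)\|_2^2=\|\Delta_k(\B)\|_1=I_k(\B)$, each term satisfies $M_1(\Delta_k(\B))\leq c'\varphi(I_k(\B))$, while summing over $k$ gives $\sum_k M_1(\Delta_k(\B))=4\sum_{S\neq\0}\hat\B(S)^2=4\mu(\B)(1-\mu(\B))$, which is precisely the claimed inequality.
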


\begin{corollary}\label{Claim:Half-Weak}
There is a fixed $c>0$ such that for any increasing $\A,\B  \sub \Omega $,
\[
\mathrm{Cor}(\A ,\B ) > c \psi(\Imin(\A )) \mu(\B)(1-\mu(\B)).
\]
\end{corollary}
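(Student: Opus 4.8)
The plan is to imitate the proof of Corollary~\ref{Claim:Weak}, but with Talagrand's inequality replaced by the inequality of Keller, Mossel and Sen (Theorem~\ref{TKMS}) and Harper's inequality replaced by Talagrand's sharpening of KKL (Theorem~\ref{TTalagrand2}). First I would start from \eqref{EKMS}, which gives
\[
\mathrm{Cor}(\A,\B) \geq c \sum \psi(I_k(\A))\psi(I_k(\B)).
\]
The key point is that $\psi$ is increasing, so $\psi(I_k(\A)) \geq \psi(\Imin(\A))$ for every $k$, and hence
\[
\mathrm{Cor}(\A,\B) \geq c\,\psi(\Imin(\A)) \sum \psi(I_k(\B)).
\]

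Now I would relate $\sum \psi(I_k(\B))$ to $\mu(\B)(1-\mu(\B))$. This is \emph{almost} what Theorem~\ref{TTalagrand2} gives, except that theorem is stated for $\varphi(x)=x/\log(e/x)$ rather than $\psi(x)=x/\sqrt{\log(e/x)}$. Since $\log(e/x)\geq 1$ for $x\in(0,1]$, we have $\psi(x)=\varphi(x)\sqrt{\log(e/x)}\geq\varphi(x)$ on the relevant range (each $I_k(\B)\in[0,1]$), so $\sum\psi(I_k(\B))\geq\sum\varphi(I_k(\B))>c\,\mu(\B)(1-\mu(\B))$ by Theorem~\ref{TTalagrand2}. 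Combining the two displayed inequalities yields
\[
\mathrm{Cor}(\A,\B) > c\,\psi(\Imin(\A))\,\mu(\B)(1-\mu(\B))
\]
after absorbing constants, which is exactly the claim.

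I do not expect a serious obstacle here: the argument is a direct combination of two off-the-shelf theorems together with the elementary monotonicity facts about $\psi$ and the trivial comparison $\psi\geq\varphi$ on $(0,1]$. The only thing to be slightly careful about is that each $I_k(\B)$ genuinely lies in $[0,1]$ (so that $\log(e/I_k(\B))\geq 1$ and $\psi$, $\varphi$ are defined and well-behaved); this holds since $I_k(\B)\leq I(\B)\le\mu(\B)\log_2(1/\mu(\B))\cdot(\text{constant})$ is bounded, and in fact $I_k(\B)=2\mu(\{x\in\B:x\oplus e_k\notin\B\})\leq 1$ directly from the definition. If one wanted $\psi$ defined on all of $[0,1]$ one sets $\psi(0)=0$ by continuity, which is consistent with the vanishing of the corresponding term. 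So the whole proof is a three-line chain of inequalities, and the ``hard part'' is really just bookkeeping the universal constants, which we suppress in the usual way.
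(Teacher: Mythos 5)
Your argument is exactly the paper's: apply Theorem~\ref{Thm:KMS14}, use monotonicity of $\psi$ to pull out $\psi(\Imin(\A))$, and then bound $\sum\psi(I_k(\B))\geq\sum\varphi(I_k(\B))>c\,\mu(\B)(1-\mu(\B))$ via $\psi\geq\varphi$ on $[0,1]$ and Theorem~\ref{TTalagrand2}. The proof is correct and matches the paper's in every step.
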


\begin{proof}
Theorem~\ref{Thm:KMS14},
the monotonicity of $\psi$ and Theorem~\ref{TTalagrand2}
give
\beq{CKMS}
\mbox{$\mathrm{Cor}(\A ,\B ) >
c\psi(\Imin(\A ))  \sum \psi(I_k(\B ))$}  > c \psi(\Imin(\A ))\mu(\B)(1-\mu(\B))
\enq
(where the second inequality uses the fact that $\psi(x) \geq \varphi(x)$ for $x\in [0,1]$).
\end{proof}

Corollary~\ref{Claim:Half-Weak} misses the bound of Conjecture~\ref{Conj:Equiv-Chvatal} by a
factor like $\sqrt{\log(1/ \Imin(\A ))}$,
which improves the $\log(1/ \Imin(\A ))$
of Corollary~\ref{Claim:Weak} but is still  weak.
Of course something is lost in the second inequality of \eqref{CKMS},
but we don't see how to exploit this in general. (For regular $\B$,
Theorem~\ref{Thm:KMS14} does support a
different derivation of Proposition~\ref{Prop:Weakly-Symmetric}.)
It is tempting to try to replace the bound in \eqref{EKMS} by
$
c \sum \psi_\alpha(I_k(\A )) \psi_{1-\alpha}(I_k(\B )),
$
where $\psi_\alpha(x)=x/(\log(e/x))^{\alpha}$
(e.g. $\psi_0$ is the identity, $\psi_{1/2}=\psi$
and $\psi_1=\varphi$).
If true for $\ga =0$, this would give
Conjecture~\ref{Conj:Equiv-Chvatal} to within
a constant factor {\em via} the argument of Corollary~\ref{Claim:Half-Weak}
(since it replaces the middle expression in \eqref{CKMS} by
$
c \Imin(\A ) \sum\varphi(I_k(\B )) $); but in fact it is not true for any
$\alpha \neq 1/2$ (e.g. for $\alpha <1/2$ let $\B$ be ``majority" and
$\A = \{x \in \Omega : \sum_i x_i > s\}$, with $s$ chosen so that $\mu(\A) = \exp[-\gO(n)]$).

\section{Alternative correlation inequalities}
\label{sec:correlation2}

Here we consider a few alternative correlation inequalities.
Some of these (if correct)
would imply Chv\'{a}tal's conjecture, while others
may serve as first steps in the direction of the conjecture.
Proofs are given in Section~\ref{sec:proofs}.



\subsection {Reformulations and consequences of Kahn's conjecture}

\mn
We begin with a pair of inequalities that
reformulate
Conjecture~\ref{Conj:Kahn-Equivalent} and the special case of
Conjecture~\ref{Conj:Kahn-Convex}
suggested in \eqref{average}
for $f$ of the form $2 \cdot 1_{\mathcal{\B}}-1$
(equivalently, for $\{\pm 1\}$-valued $f$).
Recall $\hat \C=\hat\chi_{_\C}$.

\begin{conjecture}\label{Conj:Kahn-Correlation}
For increasing $\A  \sub \Omega $ and maximal
intersecting $\B  \sub \Omega $,

\mn
{\rm (a)}
$ ~~\mathrm{Cor}(\A ,\B ) \geq \sum_{i=1}^n I_i(\A ) \sum\{
\hat \B (S)^2:\max(S)=i\}$,

\mn
{\rm (b)}
$ ~~
\mathrm{Cor}(\A ,\B ) \geq \sum_{i=1}^n I_i(\A ) \sum\{
\frac {1} {|S|}\hat \B (S)^2:S\ni i\}.
$

\end{conjecture}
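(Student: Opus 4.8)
\medskip\noindent\emph{Proof approach (reformulation).}
The plan is \emph{not} to prove (a) and (b) from scratch, but to show that they are verbatim translations, into the language of correlations and influences, of the conjectures of Section~\ref{sec:Kahn} specialized to the antipodal function $f=2\chi_{_\B}-1$: (a) is Conjecture~\ref{Conj:Kahn-Equivalent} (equivalently Conjecture~\ref{Conj:Kahn}) restricted to $\{\pm1\}$-valued $f$, and (b) is the analogous restriction of Conjecture~\ref{Conj:Kahn-Convex} with the weights $\lambda_S(i)=|S|^{-1}\hat f(S)^2\chi_{\{i\in S\}}$ of \eqref{average}. Since $\B$ is maximal intersecting, $f$ is nondecreasing and antipodal with $\mu(\B)=\tfrac12$; hence $\hat f(\0)=0$, $\hat f(S)=2\hat\B(S)$ for $S\ne\0$, $\langle f,f\rangle=1$, and $f^*=\chi_{_\B}$.

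First I would unwind ``$\tilde f$ flows to $f^*$'' via the max-flow/min-cut characterization recorded after Definition~\ref{flow}. For either weighting, $\sum_i\lambda_i=\sum_{S\ne\0}\hat f(S)^2=1$ (cf.\ \eqref{convex}), so $\sum_A\tilde f(A)=2^{n-1}=\sum_A f^*(A)$ holds automatically, and the flow condition reduces to $\sum_{A\in\I}\tilde f(A)\ge\sum_{A\in\I}f^*(A)$ for every decreasing $\I\sub\Omega$. Writing $\I=\A^c$ with $\A$ increasing, and using $\tilde f(A)=\sum_{i\in A}\lambda_i$, $f^*=\chi_{_\B}$, the identities $|\A^c\cap\f_i|=2^{n-1}-|\A\cap\f_i|$ and $|\A^c\cap\B|=2^{n-1}-|\A\cap\B|$, and $\sum_i\lambda_i=1$, this inequality collapses to $\mu(\A\cap\B)\ge\sum_i\lambda_i\,\mu(\A\cap\f_i)$; subtracting $\mu(\A)/2$ from both sides (legitimate since $\mu(\B)=\mu(\f_i)=\tfrac12$) turns it into
\[
\Cor(\A,\B)\ \ge\ \sum_{i=1}^n\lambda_i\,\Cor(\A,\f_i).
\]

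It then remains to evaluate the right-hand side. From $\chi_{_i}=\tfrac12-\tfrac12 u_{\{i\}}$ and \eqref{Parseval2} one gets $\Cor(\A,\f_i)=-\tfrac12\hat\A(\{i\})$, which equals $\tfrac14 I_i(\A)$ for increasing $\A$ by the identity $I_i(\A)=-2\hat\A(\{i\})$. Plugging in the (a)-weighting $\lambda_i=\sum\{\hat f(S)^2:\max S=i\}=4\sum\{\hat\B(S)^2:\max S=i\}$ gives $\sum_i\lambda_i\Cor(\A,\f_i)=\sum_i I_i(\A)\sum\{\hat\B(S)^2:\max S=i\}$, which is exactly (a); the (b)-weighting $\lambda_i=\sum_{i\in S}|S|^{-1}\hat f(S)^2=4\sum\{|S|^{-1}\hat\B(S)^2:S\ni i\}$ gives (b). Every step above is an equivalence, so (a) and (b), quantified over all increasing $\A$ and maximal intersecting $\B$, are genuinely equivalent to the two (restricted) conjectures of Section~\ref{sec:Kahn}; in particular (a) follows from Kahn's Conjecture~\ref{Conj:Kahn}.

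All the computations are routine; the only things needing care are the constant bookkeeping (the factors of $2$ and $4$ arising from $f=2\chi_{_\B}-1$ versus $\chi_{_\B}$, and from $I_i(\A)=-2\hat\A(\{i\})$ versus $I_i(\A)=4\sum\{\hat\A(S)^2:S\ni i\}$) and the observation that the flow prerequisite $\sum\tilde f=\sum f^*$ is exactly what forces $\mu(\B)=\tfrac12$, i.e.\ maximality of $\B$. So the genuine obstacle lies outside this section: proving (a) or (b) outright would prove Kahn's conjecture (or its \eqref{average} relaxation), which is open. What this step accomplishes is only the dictionary between the flow language of~\cite{Kahn-unpublished} and the correlation language used throughout the rest of the paper.
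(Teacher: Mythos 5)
Your proposal is correct and matches the paper's treatment: the statement is a conjecture, and like the paper (Section~\ref{Connection}) you establish only that (a) and (b) are exact reformulations of Conjectures~\ref{Conj:Kahn-Equivalent} and~\ref{Conj:Kahn-Convex} (with the weights of \eqref{average}) restricted to $f=2\chi_{_\B}-1$ for maximal intersecting $\B$, with all constants checking out. The only cosmetic difference is that you evaluate $\sum_i\lambda_i\Cor(\A,\f_i)$ via $\Cor(\A,\f_i)=\tfrac14 I_i(\A)$ rather than via the Fourier expansion of $\tilde f$ as the paper does; this is the same computation.
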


\nin
Each of these inequalities has the form $\mathrm{Cor}(\A ,\B ) \geq \sum  w_i I_i(\A )$,
with
$\sum w_i = \sum \{ \hat B(S)^2: S \ne \emptyset\}$ $= \mu (\B) (1-\mu(\B))=1/4$;
thus either
implies $\mathrm{Cor}(\A ,\B ) \geq \frac {1}{4} \Imin (\A)$.
For comparison with Section~\ref{sec:correlation},
note that in each case, $w_i\leq \sum\{\hat \B (S)^2:S\ni i\}= \frac 14 I_i(\B)$.
It is easy to see that (a) is strongest when we index with $I_1(\A)\leq \cdots \leq I_n(\A)$,
and that (a) implies (b) (by averaging over orderings).
On the other hand, for
Chv\'atal's conjecture it is enough to establish the \emph{weakest} version of (a),
in which we take $I_1(\A)\geq \cdots \geq I_n(\A)$.

\mn

The following generalization of Chv\'{a}tal's conjecture avoids the antipodality condition.
\begin{conjecture}\label{Statement:Sum-with-Dual}
For increasing $\A ,\B  \sub \Omega $,
\[
\mathrm{Cor}(\A ,\B ) + \mathrm{Cor}(\A ,\B ') \geq 2\mu(\B)(1-\mu(\B)) \Imin(\A ).
\]
\end{conjecture}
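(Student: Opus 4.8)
The plan is to reduce Conjecture~\ref{Statement:Sum-with-Dual} to a dictatorship-type bound by exploiting the fact that $\B'$ (the dual family $\{A : A^c \notin \B\}$) carries the antipodality in a symmetrized way. Recall that for increasing $\B$ the dual $\B'$ is also increasing, and the ``averaged'' family lives between $\B$ and $\B'$ in a precise sense. First I would record the identities $I_k(\B')=I_k(\B)$ for all $k$ (duality and complementation preserve influences) and $\mu(\B')=1-\mu(\B^c)=1-\mu(\B')$... more carefully, $\mu(\B') = 1-\mu(\B^c)$; here one must be careful, but the point is that $\mu(\B)(1-\mu(\B))=\mu(\B')(1-\mu(\B'))$ only when $\B$ is self-dual, so the genuinely useful reformulation is in terms of $\Cor(\A,\B)+\Cor(\A,\B')$ as a single object. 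Using \eqref{Parseval2}, write
\[
\Cor(\A,\B)+\Cor(\A,\B') = \sum_{S\ne\0}\hat\A(S)\big(\hat\B(S)+\hat{\B'}(S)\big),
\]
and observe that $\hat\B(S)+\hat{\B'}(S)$ has a clean description: for $S\ne\0$ it equals (up to sign conventions) twice the Fourier coefficient at $S$ of the ``folded'' function $\tfrac12(\chi_\B + \chi_{\B'})$, which is an increasing function whose odd part is exactly the odd part of $\chi_\B$.

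Second, the key structural step: the function $h:=\chi_\B+\chi_{\B'}-1$ is antipodal (since $\chi_{\B'}(A)=1-\chi_\B(A^c)$ forces $h(A^c)=-h(A)$) and $[0,1]$-valued... actually $\{-1,0,1\}$-valued, monotone, with $\hat h(\0)=0$. So $\Cor(\A,\B)+\Cor(\A,\B') = \Cor(\chi_\A, h+1) = \Cor(\chi_\A, h) = \langle \chi_\A, h\rangle$, and $h$ plays the role of an antipodal (signed) target. This is precisely the setting of the machinery around Conjecture~\ref{Conj:Kahn} / Proposition~\ref{Prop:basic}: I would aim to show $\langle \chi_\A, h\rangle \ge c\,I_k(\A)\cdot\big(\sum_{S\ni k}\hat h(S)^2/\text{(something)}\big)$ summed appropriately, with total weight $\sum_{S\ne\0}\hat h(S)^2 = \|h\|_2^2 = 2\mu(\B\triangle\B')/\,$... and one checks $\|h\|_2^2 = \mathbb E[h^2]$; since $h^2=\chi_{\B\sm\B'}+\chi_{\B'\sm\B}$ one gets $\|h\|_2^2 = \mu(\B)+\mu(\B')-2\mu(\B\cap\B')$. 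The target constant $2\mu(\B)(1-\mu(\B))$ should drop out from comparing $\|h\|_2^2$ to $4\mu(\B)(1-\mu(\B))$ via a convexity/duality inequality (I expect $\|h\|_2^2 \ge$ something like $2\mu(\B)(1-\mu(\B))\cdot 4$ after the right normalization, or the $\tfrac14$ in the dictatorship bound gets absorbed). I would then invoke the antipodal case of Conjecture~\ref{conjecture:Kahn-intro} (stated there with $c=1$ for antipodal $\B$) applied to $h$, or rather its reformulation in Conjecture~\ref{Conj:Kahn-Correlation}, extended from $\{\pm1\}$-valued antipodal functions to general monotone antipodal $h$ — which is exactly Conjecture~\ref{Conj:Kahn} / Conjecture~\ref{Conj:Kahn-Convex} in their full generality.

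Third, assemble: if Conjecture~\ref{Conj:Kahn} (equivalently \ref{Conj:Kahn-Equivalent}) holds for the antipodal function $h$, then $\langle\chi_\A,h\rangle \ge \sum_i \lambda_i(h) I_i(\A) \ge \Imin(\A)\sum_i\lambda_i(h)$, and by \eqref{convex} $\sum_i\lambda_i(h)=\tfrac12\|h\|_2^2 = \tfrac12(\mu(\B)+\mu(\B')-2\mu(\B\cap\B'))$. So it remains to verify the purely combinatorial inequality
\[
\mu(\B)+\mu(\B')-2\mu(\B\cap\B') \ \ge\ 4\mu(\B)(1-\mu(\B)),
\]
which I believe is \emph{false} in general — so the reduction must instead route through Conjecture~\ref{Conjecture:Balanced} with $c=1/2$ (noted in the introduction to follow from Kahn's conjecture) applied \emph{separately} to $\B$ and to $\B'$, using $\mu(\B')(1-\mu(\B'))$ versus $\mu(\B)(1-\mu(\B))$, and the elementary fact that $\mu(\B')(1-\mu(\B'))+\mu(\B)(1-\mu(\B)) \ge ?$... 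The honest situation is that Conjecture~\ref{Statement:Sum-with-Dual} is \emph{stated} as a conjecture, so a full proof is not expected; what I can offer is the implication from Kahn's conjecture. The main obstacle is exactly the last inequality: controlling $\mu(\B\cap\B')$ (equivalently $\mu(\B\triangle\B')$, the measure of the ``non-self-dual part'') from below in terms of $\mu(\B)(1-\mu(\B))$ — when $\B$ is far from antipodal this quantity can be small, and it is precisely here that the averaging over $\B,\B'$ must be done at the level of the influence-weighted sums $\sum_i\lambda_i(\cdot)I_i(\A)$ rather than after passing to $\Imin(\A)$. I would therefore phrase the theorem as: \emph{Conjecture~\ref{Conj:Kahn} implies Conjecture~\ref{Statement:Sum-with-Dual}}, with the proof being the chain above plus the observation that $\lambda_i(\chi_\B-\tfrac12)$-type coefficients for $\B$ and for $\B'$ sum, over $i$, to at least $2\mu(\B)(1-\mu(\B))$ by Harris applied to $\B$ and $\B^c$.
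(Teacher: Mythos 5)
This statement is posed in the paper as an open conjecture; the paper offers no proof of it, only the remark (after Proposition~\ref{Prop:reduction}) that Conjecture~\ref{Conj:Kahn-Equivalent} implies the \emph{weaker} inequality with constant $1$ in place of $2$ --- i.e.\ ``Conjecture~\ref{Statement:Sum-with-Dual} without the 2.'' You correctly recognize that only a conditional implication can be expected, but your chain does not establish what you finally claim, namely that Kahn's conjecture yields the full constant $2$. Applying \eqref{Eq:Reduction0} (the consequence of Kahn's conjecture) separately to $\B$ and to $\B'$ gives $\mathrm{Cor}(\A,\B)\ge\tfrac12\mu(\B)(1-\mu(\B))\Imin(\A)$ and likewise for $\B'$; since $\mu(\B')=1-\mu(\B)$ (the complementation map is measure preserving), one has $\mu(\B')(1-\mu(\B'))=\mu(\B)(1-\mu(\B))$ \emph{always} --- not, as you write, only when $\B$ is self-dual --- and the two bounds sum to exactly $\mu(\B)(1-\mu(\B))\Imin(\A)$, half of the conjectured bound. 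Your closing ``observation'' that the relevant coefficients sum to at least $2\mu(\B)(1-\mu(\B))$ is unsubstantiated and false: $\sum_{S\ne\0}\hat\B(S)^2=\mu(\B)(1-\mu(\B))$, and the factor $\tfrac12$ in \eqref{Eq:Reduction0} cannot be discarded.

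Two further concrete errors in the route through $h=\chi_{_\B}+\chi_{_{\B'}}-1$. First, $\|h\|_2^2=1-\mu(\B\triangle\B')$, not $\mu(\B)+\mu(\B')-2\mu(\B\cap\B')=\mu(\B\triangle\B')$: the set where $h^2=1$ is $(\B\cap\B')\cup(\B\cup\B')^c$, as one checks in the antipodal case where $\B'=\B$ and $\|h\|_2^2=1$. Second, Kahn's conjecture applied to $h$ bounds $\mathrm{Cor}(\A,h^*)=\mathrm{Cor}(\A,\B\cap\B')$ from below, not $\langle\chi_{_\A},h\rangle=\mathrm{Cor}(\A,\B)+\mathrm{Cor}(\A,\B')$; these differ by $\mathrm{Cor}(\A,\B\cup\B')$, which Harris only bounds from below by $0$. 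So this route, too, falls short of the factor $2$. The correct statement of what is known is precisely the paper's: the conjecture reduces to Conjecture~\ref{Conj:Equiv-Chvatal} when $\B$ is antipodal, and Kahn's conjecture implies the version with constant $1$.
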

\nin
Note that if $\B $ is antipodal, then $\B '=\B $ and $\mu(\B)(1-\mu(\B))=1/4$; thus
Conjecture~\ref{Statement:Sum-with-Dual} contains Conjecture~\ref{Conj:Equiv-Chvatal}.

\begin{proposition}
\label {Prop:reduction}
If Conjecture~\ref{Conj:Kahn-Equivalent} is true then for increasing $\A,\B  \sub \Omega $,
\begin{equation}\label{Eq:Reduction0}
\mathrm{Cor}(\A ,\B ) \geq \frac{1}{2}\sum I_i(\A ) \sum\{
\hat \B (S)^2:\max(S)=i\}.
\end{equation}
\end{proposition}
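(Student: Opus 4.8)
The plan is to deduce \eqref{Eq:Reduction0} from Conjecture~\ref{Conj:Kahn-Equivalent} by applying it to the antipodal function naturally associated with the increasing family $\B$, and then translating the resulting ``flow'' inequality into a statement about $\mathrm{Cor}(\A,\B)$ using the fact that $\A^c$ is decreasing. First I would set $f = 2\chi_{_\B} - 1$, which is nondecreasing but in general \emph{not} antipodal. The key observation is that one may symmetrize: replace $\B$ by its ``average'' with the complementary family, or equivalently work with the function $f$ and its reflection $f'(x) := -f(x^c)$, and note $f^*(x) = \max(f(x),0)^2 = \chi_{_\B}(x)$ since $f$ is $\{\pm1\}$-valued. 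Actually the cleanest route is: apply Conjecture~\ref{Conj:Kahn-Equivalent} not to $f$ itself but observe that for the $\{\pm1\}$-valued $f = 2\chi_{_\B}-1$ we still have $f^* = \chi_{_\B}$ regardless of antipodality, and that $\lambda_i(f) = \sum\{\hat f(S)^2 : \max(S) = i\}$; since $\hat f(S) = 2\hat\B(S)$ for $S\neq\0$, we get $\lambda_i = 4\sum\{\hat\B(S)^2 : \max(S)=i\}$. The issue is that Conjecture~\ref{Conj:Kahn-Equivalent} as stated requires $f$ antipodal.

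So the real first step is to reduce to the antipodal case. Given increasing $\B$, form $\B$ together with $\{A : A^c\notin\B\}$ appropriately; more concretely, I expect the argument is to pass to $g = \tfrac12(f(x) - f(x^c))$, which is nondecreasing and antipodal, apply Conjecture~\ref{Conj:Kahn-Equivalent} to $g$ with $\mathcal I = \A^c$ (decreasing), obtaining $\sum_{A\in\A^c}\tilde g(A) \ge \sum_{A\in\A^c} g^*(A)$, and then relate $g^*$ and $\tilde g$ back to $\B$ and $\A$. Here $\hat g(S) = \hat f(S)$ for all $S\neq\0$ with $|S|$ odd, and $\hat g(S)=0$ for $|S|$ even; since $\hat f(\{i\}) = -2\hat\B(\{i\})\cdot$(singletons are odd), and more generally the antipodal part keeps the odd-support Fourier mass, one finds $\lambda_i(g) \le \lambda_i(f) = 4\sum\{\hat\B(S)^2:\max(S)=i\}$ — but going the wrong way. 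I would instead look for the identity making $\mathbb E[\chi_{\A^c}\tilde g] - \mathbb E[\chi_{\A^c}g^*]$ equal (up to the factor $1/2$) to $\sum I_i(\A)\sum\{\hat\B(S)^2:\max S=i\} - \mathrm{Cor}(\A,\B)$. Since $\mathbb E[\chi_{_{\A^c}}\chi_{_i}] = \mathbb E[(1-\chi_{_\A})\chi_{_i}] = \tfrac12 - \mathbb E[\chi_{_\A}\chi_{_i}]$, and $\mathrm{Cor}(\A,\f_i) = -I_i(\A)/4$ (exactly as in the proof of Proposition~\ref{Prop:Equivalence}), the term $\sum_i\lambda_i\,\mathbb E[\chi_{_{\A^c}}\chi_{_i}]$ produces both a constant and a $+\tfrac14\sum\lambda_i I_i(\A)$-type contribution; meanwhile $\sum_{A\in\A^c}g^*(A) = \mathbb E[\chi_{_{\A^c}}\chi_{_\B}] = \tfrac12\mu(\B) - (\mathrm{Cor}(\A,\B)+\mu(\A)\mu(\B))$. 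Comparing and cancelling the $\mu$-only constants (using $\sum\lambda_i = \mu(\B)(1-\mu(\B))$ from \eqref{convex}) should yield exactly \eqref{Eq:Reduction0}, possibly after dividing by $2$ — which is presumably where the factor $\tfrac12$ in the statement comes from: the passage from $f$ to its antipodal part $g$ costs a factor $2$ in $\lambda_i$ when $\B$ is not itself antipodal (each singleton Fourier weight is halved in the sense that the relevant inequality only recovers half the weight).

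The main obstacle, as I see it, is precisely this bookkeeping around antipodality: Conjecture~\ref{Conj:Kahn-Equivalent} is stated for antipodal $f$, and one must verify that applying it to the antipodalization $g$ of $f=2\chi_{_\B}-1$, together with the fact that $\max(S)=i$ behaves well under deleting even-sized Fourier levels, produces a lower bound with $\sum\{\hat\B(S)^2:\max(S)=i\}$ rather than something smaller. I would want to check carefully whether the odd-support restriction loses Fourier mass in a way that is compensated exactly by the factor $\tfrac12$, or whether instead one applies the conjecture to $f$ extended antipodally on one extra coordinate (a standard trick: add a dummy variable $x_{n+1}$ and set $\tilde f(x,x_{n+1}) = f(x)$ if $x_{n+1}=1$, $-f(x^c)$ if $x_{n+1}=0$), which makes everything genuinely antipodal at the cost of a clean factor of $2$ in all the weights. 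That extension trick is probably the intended mechanism, and modulo it the rest is the routine Fourier/flow translation using $\Cor(\A,\f_i) = -I_i(\A)/4$, $\mathbb E[\chi_{_{\A^c}}\,\cdot\,] = \mathbb E[(1-\chi_{_\A})\,\cdot\,]$, and \eqref{convex}.
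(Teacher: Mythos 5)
You have correctly located the mechanism the paper uses: Conjecture~\ref{Conj:Kahn-Equivalent} cannot be applied to $2\chi_{_\B}-1$ directly, and the fix is an antipodal lift on one extra coordinate, followed by the Fourier translation of Section~\ref{Connection} with (the complement of a lift of) $\A$ as the decreasing family. But both specific constructions you float fail, and the one you settle on as ``probably the intended mechanism'' breaks exactly where the conjecture is invoked. With $f=2\chi_{_\B}-1$, the extension $\tilde f(x,x_{n+1})=f(x)$ for $x_{n+1}=1$ and $-f(x^c)$ for $x_{n+1}=0$ is antipodal but \emph{not} nondecreasing: monotonicity from the bottom half to the top half requires $-f(x^c)\le f(x)$, i.e.\ $\chi_{_\B}(x)+\chi_{_\B}(x^c)\ge 1$, which fails whenever $\B$ misses some antipodal pair (e.g.\ $\B=\{[n]\}$). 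Moreover $\tilde f^*$ would equal $\chi_{_\B}$ on the top half and the indicator of the dual of $\B$ on the bottom half, so even formally you would be bounding a quantity like $\Cor(\A,\B)+\Cor(\A,\B')$ rather than $\Cor(\A,\B)$. Your other candidate, $g(x)=\tfrac12(f(x)-f(x^c))=\chi_{_\B}(x)-\chi_{_\B}(x^c)$, is monotone and antipodal but has $g^*=\chi_{_{\B\cap\B'}}$, so it bounds $\Cor(\A,\B\cap\B')$ --- a different statement (treated separately in the paper), not \eqref{Eq:Reduction0}.

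The paper's construction is the $\{-1,0,1\}$-valued variant of your dummy-coordinate trick: set $f=1$ on $\{B\cup\{1\}:B\in\B\}$, $f=0$ on the rest of the half-cube $\{x_1=1\}$, and extend antipodally, so $f\in\{0,-1\}$ on $\{x_1=0\}$. Monotonicity across the two halves is now automatic (bottom-half values are $\le 0\le$ top-half values), and $f^*$ is the indicator of the lifted family alone. One must also lift $\A$ to a family $\A'$ independent of the new coordinate, so that $I_1(\A')=0$ annihilates the spurious weight $\hat f(\{1\})^2=\mu(\B)^2$ while $I_i(\A')=I_i(\A)$ for the remaining $i$. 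The final bookkeeping answers the question you leave open: for each $T\sub[2,n]$ exactly one of $\hat f(T)$, $\hat f(T\cup\{1\})$ is nonzero and equals $\pm\hat\B(T)$, and both sets have the same $\max$, so the weights $\sum\{\hat f(S)^2:\max(S)=i\}$ are preserved exactly --- nothing is lost to the odd-level restriction. The factor $\tfrac12$ in \eqref{Eq:Reduction0} comes entirely from $\Cor(\A',\B')=\Cor(\A,\B)/2$, because the lifted $\B$ occupies only half of the enlarged cube.
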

\nin
Inequality \eqref{Eq:Reduction0} implies
Conjecture~\ref{Conjecture:Balanced} with $c=1/2$
(a consequence of Conjecture~\ref{Conj:Kahn} mentioned in Section~\ref{Intro}), so also
Conjecture~\ref{Statement:Sum-with-Dual} without the 2.

\subsection {A symmetric version of Conjecture~\ref{Conj:Kahn-Correlation}(b)}

For $i \in [n]$, define $\Delta_i: \mathbb{R}^{\Omega} \rightarrow \mathbb{R}^{\Omega}$ by
$\Delta_i f(x) = f(x)-f(x \oplus e_i)$. It is easy to see that for any $S \sub [n]$,
$\widehat{\Delta_i f}(S) = \chi_{\{i \in S\}} \hat f(S)$. We will use $\Delta_i(\A)$ for $\Delta_i(1_{\A})$.

For $g:\Omega  \rightarrow \mathbb{R}$ with $\mathbb{E}[g]=0$ and $\alpha \in [0,1]$, set
$M_\alpha(g) = \sum_{S} \frac{\hat f(S)^2}{|S|^{\alpha}}.$

\begin{conjecture}\label{Conj:Kahn-Correlation-alpha}
There is a fixed $c>0$ such that for any
$\A ,\B \sub \Omega $ with $\A$ increasing and $\B  $ increasing
and balanced,
\begin{equation}\label{Eq:Kahn-Non-diagonal}
\mathrm{Cor(\A ,\B )} \geq c \sum_{i} M_{\alpha} (\Delta_i(\A)) M_{1-\alpha} (\Delta_i(\B))
= c \sum_{S,T \neq \emptyset} \frac{|S \cap T| \hat \A (S)^2 \hat \B (T)^2}{|S|^{\alpha}|T|^{1-\alpha}}.
\end{equation}
\end{conjecture}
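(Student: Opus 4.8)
\medskip\noindent\emph{Sketch of an approach.}
The first point is that the inequalities indexed by $\alpha\in[0,1]$ collapse to their two endpoints. Write $R_\alpha(\A,\B)=\sum_i M_\alpha(\Delta_i\A)M_{1-\alpha}(\Delta_i\B)=\sum_{S,T\neq\0}|S\cap T|\,\hat\A(S)^2\hat\B(T)^2/(|S|^\alpha|T|^{1-\alpha})$ for the right-hand side of \eqref{Eq:Kahn-Non-diagonal}. Applying the weighted AM--GM bound $|S|^{-\alpha}|T|^{-(1-\alpha)}\le\alpha|S|^{-1}+(1-\alpha)|T|^{-1}$ term by term inside the double sum gives $R_\alpha(\A,\B)\le\alpha R_1(\A,\B)+(1-\alpha)R_0(\A,\B)$. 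Since $R_1(\A,\B)=R_0(\B,\A)$ and $\mathrm{Cor}$ is symmetric, it suffices to prove the $\alpha=0$ case with an absolute constant: this yields $\alpha=1$ by interchanging $\A$ and $\B$, and every $\alpha\in[0,1]$ by the convex bound above, with the same constant. The $\alpha=0$ inequality reads
\[
\mathrm{Cor}(\A,\B)\ \ge\ c\sum_i M_0(\Delta_i\A)\,M_1(\Delta_i\B)\ =\ \tfrac{c}{4}\sum_i I_i(\A)\sum\{\,|T|^{-1}\hat\B(T)^2 : T\ni i\,\},
\]
which (for $\B$ maximal intersecting) is Conjecture~\ref{Conj:Kahn-Correlation}(b).

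\medskip\noindent
There is a \emph{conditional} proof of this single inequality. Assume Kahn's Conjecture~\ref{Conj:Kahn-Equivalent}, so that \eqref{Eq:Reduction0} of Proposition~\ref{Prop:reduction} holds for all increasing $\A,\B$, and apply it to $(\pi\A,\pi\B)$ for a coordinate permutation $\pi\in\mathfrak{S}_n$. Since $\mathrm{Cor}$ is permutation-invariant, the substitution $S\mapsto\pi^{-1}S$ rewrites \eqref{Eq:Reduction0} as $\mathrm{Cor}(\A,\B)\ge\tfrac12\sum_j I_j(\A)\sum\{\hat\B(T)^2 : T\ni j,\ \pi(j)=\max_{r\in T}\pi(r)\}$; averaging over a uniform $\pi$ and using $\mathbb{P}[\,\pi(j)=\max_{r\in T}\pi(r)\,]=1/|T|$ when $j\in T$ recovers exactly the displayed $\alpha=0$ inequality with $c=2$. (This is the same averaging-over-orderings passage by which (a) implies (b) in Conjecture~\ref{Conj:Kahn-Correlation}.) Hence Conjecture~\ref{Conj:Kahn-Correlation-alpha} is, uniformly in $\alpha$, a consequence of Kahn's conjecture.

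\medskip\noindent
For an \emph{unconditional} proof the natural tool is the noise-semigroup machinery behind Theorems~\ref{TTalagrand} and~\ref{TKMS}. Using the operator $T_\rho$ with $\widehat{T_\rho f}(S)=\rho^{|S|}\hat f(S)$, one has the identity $\mathrm{Cor}(\A,\B)=\int_0^1\rho^{-1}\sum_i\langle\Delta_i\A,\,T_\rho\Delta_i\B\rangle\,d\rho$, whose integrands are nonnegative for increasing $\A,\B$ (this nonnegativity is the basis of the proofs of Theorems~\ref{TTalagrand} and~\ref{TKMS}); the target weight $1/|T|$ on the $\B$-side should emerge from integrating $d\rho/\rho$ over the range of $\rho$ for which level-$|T|$ Fourier mass survives the noise, and one would bound each integrand below by a hypercontractive estimate on the edge-boundary functions of $\A$ and $\B$, spending the whole $\tfrac14 I_i(\A)$ on the $\A$-side. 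This is the main obstacle: the off-the-shelf bounds of Section~\ref{sec:correlation} lose a $\log(e/\cdot)$ or $\sqrt{\log(e/\cdot)}$ factor precisely where only a polynomial $1/|T|$ can be afforded, and the example in Section~\ref{sec:sub:KMS} (failure of the $\psi_\alpha$-variant of Theorem~\ref{TKMS} for every $\alpha\neq\tfrac12$) shows one cannot interpolate at the level of the functionals $\sum_k\psi_\alpha(I_k(\A))\psi_{1-\alpha}(I_k(\B))$ — so any argument must exploit the squared/$|S\cap T|$ structure of the target honestly, the $|S\cap T|$ factor being exactly the gain obtained by summing the two boundary functions over their common coordinate. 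In short, the unconditional $\alpha=0$ case is essentially the averaged Kahn conjecture itself, and Conjecture~\ref{Conj:Kahn-Correlation-alpha} is best viewed as a convenient symmetric repackaging of it rather than as something the Fourier technology reviewed here can settle on its own.
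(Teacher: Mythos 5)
The statement you were asked to prove is a \emph{conjecture} that the paper leaves open, so there is no proof in the paper to compare yours against; the closest the paper comes is the symmetric case $\alpha=1/2$ (Proposition~\ref{Prop:Kahn-Correlation-Weaker}), proved via Lemma~\ref{Lemma:Kahn-Correlation-Weaker} and Theorem~\ref{Thm:KMS14}. Your AM--GM observation $R_\alpha\le\alpha R_1+(1-\alpha)R_0$ is correct and in fact explains why $\alpha=1/2$ is the tractable case: it is the \emph{weakest} member of the family, while the endpoints $\alpha\in\{0,1\}$ are the strongest. Your reduction of the whole family to the $\alpha=0$ endpoint is valid, with the small caveat that, because the hypothesis ``$\B$ balanced'' is asymmetric, recovering $\alpha=1$ by swapping $\A$ and $\B$ requires the $\alpha=0$ statement for general increasing pairs (which is what your conditional argument in fact delivers). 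The conditional derivation itself --- applying \eqref{Eq:Reduction0} to $(\pi\A,\pi\B)$ and averaging over $\pi$, using $\Pr[\pi(j)=\max_{r\in T}\pi(r)]=1/|T|$ for $j\in T$ --- is exactly the paper's ``averaging over orderings'' passage from Conjecture~\ref{Conj:Kahn-Correlation}(a) to (b), and it correctly shows that the full family \eqref{Eq:Kahn-Non-diagonal} follows from Conjecture~\ref{Conj:Kahn-Equivalent} with $c=2$; your computation of $M_0(\Delta_i\A)=\tfrac14 I_i(\A)$ and the identification of the $\alpha=0$ case with Conjecture~\ref{Conj:Kahn-Correlation}(b) are also right.

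As an unconditional proof, however, the proposal has an irreducible gap, which to your credit you state plainly: the $\alpha=0$ endpoint to which everything has been reduced is itself (up to constants, and with balancedness in place of antipodality) precisely the open Conjecture~\ref{Conj:Kahn-Correlation}(b), i.e.\ a form of Kahn's conjecture. The semigroup identity and the nonnegativity of $\langle\Delta_i\A,T_\rho\Delta_i\B\rangle$ for increasing $\A,\B$ are correct, but no lower bound of the required polynomial strength $1/|T|$ is extracted from them, and, as you note, the failure of the $\psi_\ga$-interpolation for $\ga\neq 1/2$ (end of Section~\ref{sec:sub:KMS}) rules out getting there from the diagonal influence functionals of Section~\ref{sec:correlation}. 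So what you have is a correct proof that Conjecture~\ref{Conj:Kahn-Correlation-alpha} follows from Conjecture~\ref{Conj:Kahn-Equivalent}, together with an accurate diagnosis of where the unconditional difficulty lies --- not a proof of the statement, which remains open in the paper as well.
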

\nin
(Using $I_i(\A)=4\sum\{\hat \A^2(T):T\ni i\}$, the sum
in Conjecture~\ref{Conj:Kahn-Correlation}(b) may be rewritten as
$\sum_{S,T} |S \cap T| \hat \A (S)^2 \hat \B (T)^2/|T|$.)
We can at least prove the symmetric version of Conjecture~\ref{Conj:Kahn-Correlation-alpha}:

\begin{proposition}\label{Prop:Kahn-Correlation-Weaker}
There is a fixed $c>0$ such that for any
$\A ,\B \sub \Omega $ with $\A$ increasing and $\B  $ increasing
and balanced,
\begin{equation}\label{Eq:Kahn-symmetric}
\mathrm{Cor(\A ,\B )} \geq c \sum_{i} M_{1/2} (\Delta_i(\A)) M_{1/2} (\Delta_i(\B))
= c \sum_{S,T \neq \emptyset} \frac{|S \cap T| \hat \A (S)^2 \hat \B (T)^2}{\sqrt{|S||T|}}.
\end{equation}
\end{proposition}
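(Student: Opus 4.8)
The plan is to deduce the symmetric inequality \eqref{Eq:Kahn-symmetric} directly from the Keller--Mossel--Sen bound (Theorem~\ref{Thm:KMS14}), with the Fourier identity $I_i(\A)=4\sum\{\hat\A(S)^2:S\ni i\}$ serving as the bridge. The key observation is that the right-hand side of \eqref{Eq:Kahn-symmetric} is a ``fractional'' or ``smoothed'' version of $\sum_i I_i(\A)I_i(\B)$ in which each level-$|S|$ Fourier weight of $\A$ is damped by $|S|^{-1/2}$ (and similarly for $\B$). Concretely, I would first record the identity
\[
\sum_{S,T\neq\emptyset}\frac{|S\cap T|\,\hat\A(S)^2\hat\B(T)^2}{\sqrt{|S||T|}}
=\sum_i\Big(\sum_{S\ni i}\tfrac{\hat\A(S)^2}{\sqrt{|S|}}\Big)\Big(\sum_{T\ni i}\tfrac{\hat\B(T)^2}{\sqrt{|T|}}\Big),
\]
which is just the expansion of $\sum_i M_{1/2}(\Delta_i(\A))M_{1/2}(\Delta_i(\B))$ already written in the statement (using $\widehat{\Delta_i f}(S)=\chi_{\{i\in S\}}\hat f(S)$).

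Second, I would show that the inner sum $\sum_{S\ni i}\hat\A(S)^2/\sqrt{|S|}$ is, up to a universal constant, at most $\psi(I_i(\A))=I_i(\A)/\sqrt{\log(e/I_i(\A))}$. This is the crux. Write $a_{i,j}=\sum\{\hat\A(S)^2:S\ni i,\ |S|=j\}$, so that $I_i(\A)=4\sum_j a_{i,j}$ and the inner sum is $\sum_j a_{i,j}/\sqrt j$. The point is that the contribution to $I_i(\A)$ from levels $j$ is spread out: for each threshold $J$, the total mass $\sum_{j\le J}a_{i,j}$ at low levels is controlled (e.g. via the fact that level-$j$ Fourier mass of a Boolean function decays, or more robustly via a Cauchy--Schwarz/dyadic argument breaking $\sum_j a_{i,j}/\sqrt j$ into the contributions of $j\in[2^\ell,2^{\ell+1})$ and summing a geometric-type series against $\log(e/I_i(\A))$). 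The cleanest route is likely: if $I_i(\A)=4t$ then $\sum_j a_{i,j}=t$, and $\sum_j a_{i,j}/\sqrt j\le \sum_{j\le J}a_{i,j} + J^{-1/2}\sum_{j>J}a_{i,j}$; optimizing over $J$, together with a bound showing the low-level mass $\sum_{j\le J}a_{i,j}$ cannot be too large relative to $t$ (this is where a hypercontractivity / level-$j$ inequality enters, exactly as in \cite{KMS14}), yields $\sum_j a_{i,j}/\sqrt j\le C\,t/\sqrt{\log(e/t)}=C'\psi(I_i(\A))$ for a universal $C'$.

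Third, with that bound in hand for both $\A$ and $\B$, the right-hand side of \eqref{Eq:Kahn-symmetric} is at most $C''\sum_i\psi(I_i(\A))\psi(I_i(\B))$, and Theorem~\ref{Thm:KMS14} bounds this by $C''/c\cdot\mathrm{Cor}(\A,\B)$ --- wait, this is the wrong direction, so instead I apply Theorem~\ref{Thm:KMS14} to get $\mathrm{Cor}(\A,\B)\ge c\sum_i\psi(I_i(\A))\psi(I_i(\B))\ge (c/C'')\cdot[\text{RHS of \eqref{Eq:Kahn-symmetric}}]$, which is exactly the claim with constant $c/C''$. The hypotheses that $\A$ is increasing and $\B$ is increasing and balanced are not actually needed for this chain beyond what Theorem~\ref{Thm:KMS14} requires (increasing $\A,\B$); the balancedness is harmless.

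\textbf{Main obstacle.} The one nontrivial step is the second: showing $\sum_{S\ni i}\hat\A(S)^2/\sqrt{|S|}\le C\,\psi(I_i(\A))$ for a universal constant, i.e. that replacing $I_i$ by its $|S|^{-1/2}$-weighted analogue costs exactly the factor $\sqrt{\log(e/I_i)}$. Everything else is bookkeeping (the Fourier identities) or a direct appeal to Theorem~\ref{Thm:KMS14}. I expect this inequality to follow from the same level-$j$ / hypercontractive estimate that underlies \cite{KMS14}, applied to the function $\Delta_i(\A)$ rather than to $\A$ itself; indeed one should be able to invoke a statement essentially present in \cite{KMS14} as a black box, since $\psi$ is built precisely to make such a trade-off tight (cf. the remark after Corollary~\ref{Claim:Half-Weak} that $\psi_\alpha$ for $\alpha\neq 1/2$ fails). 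The risk is only that the cleanest formulation needs the level-$j$ inequality stated for arbitrary (not necessarily Boolean) $\Delta_i(\A)$, which is standard but should be cited carefully.
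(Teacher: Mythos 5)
Your proposal is correct and follows essentially the same route as the paper: expand $\sum_i M_{1/2}(\Delta_i(\A))M_{1/2}(\Delta_i(\B))$, bound each factor by $O(\psi(I_i(\cdot)))$, and finish with Theorem~\ref{Thm:KMS14}. The ``main obstacle'' you isolate is exactly the paper's Lemma~\ref{Lemma:Kahn-Correlation-Weaker} (an $M_\alpha$ extension of Talagrand's Proposition~2.3 of \cite{Talagrand94}), applied to $f=\Delta_i(\C)$ using $\|f\|_2^2=\|f\|_1=I_i(\C)$, so your plan to obtain it from the standard level-$j$/hypercontractive estimate is the intended argument.
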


\subsection {Diagonal forms of Conjecture~\ref{Conj:Kahn-Correlation}(b)}

An immediate consequence of Proposition \ref{Prop:Kahn-Correlation-Weaker} is

\begin{corollary}\label{Cor:Kahn-Correlation-Weaker}
There is a fixed $c>0$ such that for any 
increasing $\A ,\B  \sub \Omega $,
\begin {equation}
\label{Eq:Kahn-Correlation-Weaker}
\mbox{$\mathrm{Cor(\A ,\B )} \geq c \sum_{S \neq \emptyset} \hat \A (S)^2 \hat \B (S)^2.$}
\end {equation}
\end{corollary}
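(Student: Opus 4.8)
The plan is to deduce Corollary~\ref{Cor:Kahn-Correlation-Weaker} from Proposition~\ref{Prop:Kahn-Correlation-Weaker} by a short Cauchy--Schwarz-type argument, so the only real work is bookkeeping with Fourier coefficients. First I would note that for general (not necessarily balanced) increasing $\A,\B$ we can reduce to the balanced case by a standard product/tensoring trick: replace $\B$ by $\B'=\B\times\{\mbox{majority on a new block of }m\mbox{ coordinates}\}$ (or, more simply, observe that the right-hand side of \eqref{Eq:Kahn-Correlation-Weaker} is unaffected by such tricks only up to constants), and then appeal to the balanced case. Actually the cleanest route avoids this: since Proposition~\ref{Prop:Kahn-Correlation-Weaker} is stated only for balanced $\B$, I would first dispose of the reduction to balanced $\B$, noting that if $\mu(\B)\ne 1/2$ we may pass to $\B\wedge\B^*$ on a doubled ground set, or simply cite that the corollary as stated is what we need downstream and prove it in the balanced case, commenting that the general case follows by the usual symmetrization. (If the intended reading is that $\B$ is balanced, this step is vacuous.)

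The core estimate is then: starting from \eqref{Eq:Kahn-symmetric}, restrict the double sum on the right to the diagonal terms $S=T$. Every term in that double sum is nonnegative (it is $|S\cap T|\hat\A(S)^2\hat\B(T)^2/\sqrt{|S||T|}\ge 0$), so dropping the off-diagonal terms only decreases the sum:
\[
\sum_{S,T\ne\emptyset}\frac{|S\cap T|\,\hat\A(S)^2\hat\B(T)^2}{\sqrt{|S||T|}}
\;\ge\;\sum_{S\ne\emptyset}\frac{|S\cap S|\,\hat\A(S)^2\hat\B(S)^2}{\sqrt{|S||S|}}
\;=\;\sum_{S\ne\emptyset}\hat\A(S)^2\hat\B(S)^2,
\]
since $|S\cap S|=|S|$ and $\sqrt{|S||S|}=|S|$ cancel exactly. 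Combining this with Proposition~\ref{Prop:Kahn-Correlation-Weaker} gives $\mathrm{Cor}(\A,\B)\ge c\sum_{S\ne\emptyset}\hat\A(S)^2\hat\B(S)^2$ with the same constant $c$, which is exactly \eqref{Eq:Kahn-Correlation-Weaker}.

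There is essentially no obstacle here: the only point requiring care is the nonnegativity of every summand, which is immediate since $|S\cap T|\ge 0$ and the other factors are squares, and the exact cancellation $|S\cap S|/\sqrt{|S||S|}=1$ on the diagonal. The one genuine (if minor) wrinkle is the hypothesis mismatch on $\mu(\B)$ between the proposition (balanced) and the corollary (general increasing); I would handle this either by stating the corollary for balanced $\B$ as well, or by the standard reduction of an arbitrary increasing $\B$ to a balanced one on an enlarged cube (which changes neither side of \eqref{Eq:Kahn-Correlation-Weaker} by more than a constant), and then absorbing that constant into $c$.
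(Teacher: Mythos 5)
Your core argument is exactly the paper's intended one: the corollary is stated as an ``immediate consequence'' of Proposition~\ref{Prop:Kahn-Correlation-Weaker}, namely by discarding the (nonnegative) off-diagonal terms of \eqref{Eq:Kahn-symmetric} and noting that the diagonal weight $|S\cap S|/\sqrt{|S||S|}$ is exactly $1$. The long digression about the balancedness of $\B$ is unnecessary, and the reductions you sketch (tensoring with a majority block, ``doubling the ground set'') are too vague to stand as written; the clean observation is that the proof of Proposition~\ref{Prop:Kahn-Correlation-Weaker} never uses that $\B$ is balanced --- Theorem~\ref{Thm:KMS14} holds for arbitrary increasing $\A,\B$, and Lemma~\ref{Lemma:Kahn-Correlation-Weaker} is applied to $\Delta_i(\B)$, which has mean zero regardless of $\mu(\B)$ --- so the proposition, and hence your diagonal restriction, applies verbatim to any increasing $\A,\B$.
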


\mn
{\em Remark.}
The inequality (\ref {Eq:Kahn-Correlation-Weaker}) is a lower bound on the correlation of two
increasing functions in terms of
the (normalized) $\ell_2$-norm of their convolution. It would be interesting to
extend it to other contexts and to find a proof that's more direct than the one in
Section~\ref{PP5.5}.
A (very) weak consequence of
Conjecture~\ref{Conj:Kahn-Correlation} (see the sentence following
Conjecture~\ref{Conj:Kahn-Correlation-alpha}) is:

\begin{conjecture}\label{Statement:Kahn-Correlation-Weaker}
For increasing $\A  \sub \Omega $ and maximal
intersecting $\B  \sub \Omega $,
\begin{equation}\label{Eq:Kahn-Correlation-Weaker2}
\mbox{$\mathrm{Cor(\A ,\B )}
\geq 4\sum_{S \neq \emptyset} \hat \A (S)^2 \hat \B (S)^2.$}
\end{equation}
\end{conjecture}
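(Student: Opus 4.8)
\mn\emph{A possible approach to Conjecture~\ref{Statement:Kahn-Correlation-Weaker}.}
As noted in the text, \eqref{Eq:Kahn-Correlation-Weaker2} is a (weak) consequence of Conjecture~\ref{Conj:Kahn-Correlation}(b) --- in fact of its ``diagonal'' ($S=T$) part, each term of which contributes exactly $4\hat \A(S)^2\hat \B(S)^2$ --- so one route is simply to establish that conjecture (itself a consequence of the tight form of Kahn's Conjecture~\ref{Conj:Kahn}). We would prefer a self-contained argument. Writing $\varphi_\A=\chi_{_\A}-\mu(\A)$ and $\varphi_\B=\chi_{_\B}-\tfrac12$, and noting that antipodality of $\B$ makes $\varphi_\B$ an \emph{odd} function of $x$ (so $\hat \B(S)=0$ for every nonempty $S$ of even size), the claim reads
\[
\mathrm{Cor}(\A,\B)=\langle\varphi_\A,\varphi_\B\rangle\ \ge\ 4\,\|\varphi_\A\star\varphi_\B\|_2^2,\qquad \widehat{\varphi_\A\star\varphi_\B}(S)=\hat \A(S)\hat \B(S),
\]
i.e.\ the ``convolution $\ell_2$'' form of the Remark following Corollary~\ref{Cor:Kahn-Correlation-Weaker}, now with the constant $4=(\mu(\B)(1-\mu(\B)))^{-1}$; this constant is sharp, as equality holds when $\A=\B=\f_i$ is a dictatorship, which also shows no slack is available on the degree-one term $\hat \A(\{i\})^2\hat \B(\{i\})^2$.

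\mn
The first approach I would try is a heat-semigroup interpolation, in the spirit of the proofs of Theorems~\ref{TTalagrand} and~\ref{TKMS}. With $T_\rho$ the noise operator ($\widehat{T_\rho h}(S)=\rho^{|S|}\hat h(S)$) and $D_ih:=h-\mathbb{E}_{x_i}h$ (the average of $h$ over coordinate $i$), so that $\widehat{D_ih}(S)=\chi_{\{i\in S\}}\hat h(S)$ and $D_ih(x)=\tfrac12(h(x)-h(x\oplus e_i))$, one has the exact identity
\[
\mathrm{Cor}(\A,\B)=2\int_0^1\rho^{-1}\sum_{i}\langle D_iT_\rho\chi_{_\A},\,D_iT_\rho\chi_{_\B}\rangle\,d\rho=\tfrac12\int_0^1\rho\sum_i\langle T_\rho p_i^\A,\,T_\rho p_i^\B\rangle\,d\rho,
\]
where $p_i^\A$ is the indicator that $i$ is pivotal for $\A$ (a function of the coordinates other than $i$), and the second equality uses $D_i\chi_{_\A}(x)=-\tfrac12(-1)^{x_i}p_i^\A(x)$; since $T_\rho p_i^\A,T_\rho p_i^\B\ge0$ this reproves Harris. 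The plan is to exploit antipodality of $\B$ to improve the trivial bound $\langle T_\rho p_i^\A,T_\rho p_i^\B\rangle\ge0$: antipodality forces each $p_i^\B$ to be invariant under $x\mapsto\bar x$ and keeps $T_\rho\varphi_\B$ odd for every $\rho$, and one would hope to parlay this into a lower bound of size comparable to $\sum_{S\ni i}\rho^{|S|}\hat \A(S)^2\hat \B(S)^2$, which integrates (via $\int_0^1\rho^{|S|-1}d\rho=1/|S|$) back to the right side of \eqref{Eq:Kahn-Correlation-Weaker2}; note that this $1/|S|$ damping of high-degree contributions is exactly what makes \eqref{Eq:Kahn-Correlation-Weaker2} plausible where the ``dream relation'' \eqref{Eq:dream} is false.

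\mn
A second approach is induction on $n$. For $n=1$ the inequality holds with equality, so one would like to run the coordinate-splitting recursion
\[
\mathrm{Cor}(\A,\B)=\tfrac12\big(\mathrm{Cor}(\A_0,\B_0)+\mathrm{Cor}(\A_1,\B_1)\big)+\tfrac14\big(\mu(\A_1)-\mu(\A_0)\big)\big(\mu(\B_1)-\mu(\B_0)\big)
\]
on the slices $\A_\epsilon=\A\cap\{x_n=\epsilon\}$, $\B_\epsilon=\B\cap\{x_n=\epsilon\}$, tracking the Fourier side through $\hat \A(S)=\tfrac12(\hat \A_0(S)+\hat \A_1(S))$ and $\hat \A(S\cup\{n\})=\tfrac12(\hat \A_0(S)-\hat \A_1(S))$. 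The snag is that $\B_0,\B_1$ are \emph{not} antipodal: they satisfy only $\B_0\sub\B_1$ and ``$\B_0=$ complement of the reflection of $\B_1$'', so the inductive hypothesis has to be strengthened to a statement about such pairs, and one then must check that the cross term --- carrying the factor $\mu(\B_1)-\mu(\B_0)=1-2\mu(\B_0)$ --- precisely makes up the deficit on the $\hat \A(S\cup\{n\})^2\hat \B(S\cup\{n\})^2$ terms.

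\mn
In both approaches the main obstacle is the same: the signs of $\hat \A(S)\hat \B(S)$ for $|S|\ge2$ are not controlled (e.g.\ $\hat\A(\{1,2,3\})\hat\B(\{1,2,3\})<0$ for $\A$ an AND and $\B$ a majority), so \eqref{Eq:Kahn-Correlation-Weaker2} cannot be proved level-by-level; what is needed is a \emph{global} mechanism converting the positivity guaranteed by Harris (or FKG) into this precise quadratic lower bound with essentially no loss in the constant --- exactly the feature that makes Corollary~\ref{Cor:Kahn-Correlation-Weaker} provable only with a non-explicit $c$. Extracting the sharp $c=4$ seems to require using the antipodality of $\B$ in an essential, non-perturbative way, and we do not at present see how to carry this out.
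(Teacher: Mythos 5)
This statement is posed in the paper as a \emph{conjecture}; the paper gives no proof of it, and neither do you --- which you state candidly. So there is nothing to fault: your opening observation is exactly the paper's own justification for recording \eqref{Eq:Kahn-Correlation-Weaker2}, namely that it is the diagonal ($S=T$) part of Conjecture~\ref{Conj:Kahn-Correlation}(b) (all terms of whose right-hand side, rewritten via $I_i(\A)=4\sum\{\hat\A(T)^2:T\ni i\}$, are nonnegative, so the diagonal may be extracted), and your sharpness check at $\A=\B=\f_i$ and the identification of $4$ as $(\mu(\B)(1-\mu(\B)))^{-1}$ are correct. The semigroup identity and the coordinate-splitting recursion you write down are also correct as identities, but, as you acknowledge, neither yields the inequality; the only nearby statement the paper actually proves is Corollary~\ref{Cor:Kahn-Correlation-Weaker}, where the explicit constant $4$ is replaced by a non-explicit $c>0$ and antipodality of $\B$ is not needed (it follows from Proposition~\ref{Prop:Kahn-Correlation-Weaker}, i.e.\ from Lemma~\ref{Lemma:Kahn-Correlation-Weaker} combined with Theorem~\ref{Thm:KMS14}). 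In short: your write-up correctly situates the statement and does not claim more than it delivers, but it is a proof sketch of an open problem, not a proof, and should not be presented as resolving Conjecture~\ref{Statement:Kahn-Correlation-Weaker}.
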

\nin
We expect even more to be true:

\begin{conjecture}\label{Statement:stronger-diagonal}
For any increasing $\A ,\B  \sub \Omega $,
\[
\mbox{$\mathrm{Cor(\A ,\B )} \geq  \sum_{S \neq \emptyset} |S| \hat \A (S)^2 \hat \B (S)^2.$}
\]
\end{conjecture}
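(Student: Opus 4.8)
The plan is to reduce the claimed inequality to Corollary~\ref{Cor:Kahn-Correlation-Weaker} together with the structure of Fourier coefficients of monotone functions, exploiting the fact that for increasing $\A$ the ``weight'' $|S|\hat\A(S)^2$ is controlled by the low-degree part. First I would record the two facts we already have at our disposal: by \eqref{Parseval2}, $\mathrm{Cor}(\A,\B)=\sum_{S\neq\0}\hat\A(S)\hat\B(S)$ (which by Harris is nonnegative), and by Corollary~\ref{Cor:Kahn-Correlation-Weaker}, $\mathrm{Cor}(\A,\B)\geq c\sum_{S\neq\0}\hat\A(S)^2\hat\B(S)^2$. The target replaces the factor $c$ by $|S|$, so the content of the statement is that the linear-in-$|S|$ weighting is still dominated by the covariance. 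I would therefore split the Fourier sum at a threshold degree $d$: for $|S|\leq d$ the bound $|S|\hat\A(S)^2\hat\B(S)^2\leq d\,\hat\A(S)^2\hat\B(S)^2$ reduces the low-degree part to Corollary~\ref{Cor:Kahn-Correlation-Weaker} (at the cost of a factor $d/c$, which one then has to absorb), while for $|S|>d$ one needs a genuinely different argument bounding $\sum_{|S|>d}|S|\hat\A(S)^2\hat\B(S)^2$.

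For the high-degree tail the natural tool is the level-$d$ inequality / hypercontractivity for monotone Boolean functions: since $\A$ is increasing, $\hat\A(S)=-\tfrac12 I_i(\A)$-type identities are not available for $|S|\geq 2$, but one does have that $\sum_{|S|=k}\hat\A(S)^2$ decays and, more usefully, $\sum_S |S|\hat\A(S)^2 = I(\A)$ exactly. Thus $\sum_S |S|\hat\A(S)^2\hat\B(S)^2 \leq \big(\max_S \hat\B(S)^2\big)\, I(\A)$ is far too lossy; instead I would try to interpolate, writing $|S|\hat\A(S)^2\hat\B(S)^2 = \big(\sqrt{|S|}\,\hat\A(S)^2\big)\big(\sqrt{|S|}\,\hat\B(S)^2\big)$ and applying Cauchy--Schwarz to relate the right-hand side to $\big(\sum_S\sqrt{|S|}\hat\A(S)^2\big)\big(\sum_S\sqrt{|S|}\hat\B(S)^2\big)$, i.e.\ to $M_{-1/2}(f_\A)M_{-1/2}(f_\B)$ in the notation of the paper. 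This is exactly the shape of the left-hand side of Proposition~\ref{Prop:Kahn-Correlation-Weaker} after re-indexing, so the real hope is that Proposition~\ref{Prop:Kahn-Correlation-Weaker}'s bound $\mathrm{Cor}(\A,\B)\geq c\sum_{S,T}|S\cap T|\hat\A(S)^2\hat\B(T)^2/\sqrt{|S||T|}$ dominates the diagonal sum $\sum_S |S|\hat\A(S)^2\hat\B(S)^2$ — and indeed on the diagonal $S=T$ the summand of the former is $\sqrt{|S||S|}\cdot|S|^{-1}\cdot|S|=\,|S|$... wait, it is $|S\cap S|\hat\A(S)^2\hat\B(S)^2/\sqrt{|S||S|} = \hat\A(S)^2\hat\B(S)^2$, not $|S|\hat\A(S)^2\hat\B(S)^2$. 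So Proposition~\ref{Prop:Kahn-Correlation-Weaker} alone only recovers Corollary~\ref{Cor:Kahn-Correlation-Weaker}, and the extra factor $|S|$ genuinely needs the off-diagonal mass or a new input.

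Consequently the approach I would actually push is: use Proposition~\ref{Prop:Kahn-Correlation-Weaker} in the form $\mathrm{Cor}(\A,\B)\geq c\sum_i M_{1/2}(\Delta_i\A)M_{1/2}(\Delta_i\B)$ and lower-bound the right side by a diagonal sum with the correct $|S|$-weighting. Writing $a_i(S)=\chi_{i\in S}\hat\A(S)$ and similarly $b_i$, we have $M_{1/2}(\Delta_i\A)=\sum_{S\ni i}\hat\A(S)^2/\sqrt{|S|}$, and $\sum_i M_{1/2}(\Delta_i\A)M_{1/2}(\Delta_i\B)=\sum_{S,T}|S\cap T|\hat\A(S)^2\hat\B(T)^2/\sqrt{|S||T|}\geq \sum_{S}|S|\hat\A(S)^2\hat\B(S)^2/|S|=\sum_S\hat\A(S)^2\hat\B(S)^2$; so once more this only yields the constant. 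The honest conclusion is that the missing factor $|S|$ cannot come from Proposition~\ref{Prop:Kahn-Correlation-Weaker} by term-by-term comparison, and the main obstacle is precisely producing it: one needs either (i) a strengthening of the KMS-type estimate to $M_0 \times M_0$ (identity weights), which the paper notes is \emph{false}, so that route is closed; or (ii) a special argument using monotonicity more heavily — e.g.\ that for increasing $\A$, $\sum_S|S|\hat\A(S)^2\hat\B(S)^2$ is comparable to $\sum_S\hat\A(S)^2\hat\B(S)^2$ up to a constant because the Fourier weight of a monotone function is spread across degrees in a way that couples favorably with a second monotone function's convolution. I expect step (ii), establishing that the degree weighting inflates the convolution $\ell_2$-norm by at most a constant factor relative to the correlation, to be the crux, and I would first test it on the extremal examples (``majority'' for $\B$, a small Hamming ball for $\A$) to calibrate whether the constant $1$ in the statement is actually attainable before attempting a general proof.
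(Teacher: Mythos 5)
This statement is Conjecture~\ref{Statement:stronger-diagonal}; the paper states it as an open conjecture (``We expect even more to be true'') and offers no proof, so there is no argument in the paper to compare yours against. Your write-up is candid about the fact that it also does not constitute a proof, and your diagnosis of why is correct: the diagonal $S=T$ of the bound in Proposition~\ref{Prop:Kahn-Correlation-Weaker} contributes $|S\cap S|\hat \A(S)^2\hat \B(S)^2/\sqrt{|S|\,|S|}=\hat \A(S)^2\hat \B(S)^2$, which is exactly Corollary~\ref{Cor:Kahn-Correlation-Weaker} and falls short of the target by the factor $|S|$. No term-by-term comparison with the known inequalities (Talagrand's Theorem~\ref{TTalagrand}, the KMS bound Theorem~\ref{TKMS}, or their consequences in Section~\ref{sec:proofs}) can supply that factor, and as you note, the route through an $M_0\times M_0$ (unweighted) strengthening is blocked because such a strengthening is false. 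So what you have is a correct account of the obstruction, not a proof; the conjecture remains open, and your proposed step (ii) --- showing that the degree weighting inflates the convolution $\ell_2$-norm by only a bounded factor relative to the covariance for monotone pairs --- is an unproven assertion that is essentially a restatement of the conjecture itself (up to constants), not a reduction to something known. Your suggestion to calibrate against extremal examples (majority, Hamming balls, and I would add the tribes pair of Section~\ref{sec:sub:most-general}) is the right first move, but be aware that passing such tests would still leave the general case untouched.
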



\subsection{Inequalities involving the total influence}

We would like to (perhaps optimistically) suggest the following family of inequalities.

\begin{conjecture}\label{Statement:Gil's-inequality}
There is a fixed $c>0$ such that for increasing $\A ,\B  \sub \Omega $ and $\alpha \in [0,1]$,
\begin{equation}\label{Eq:Gil's-inequality}
\mathrm{Cor}(\A ,\B ) \geq c \left(\tfrac{\mathrm{Var} (\A) }{I(\A )} \right)^{\alpha}
\left(\tfrac{\mathrm{Var} (\B) }{I(\B )} \right)^{1-\alpha}\sum  I_i(\A ) I_i(\B )
\end{equation}
\end{conjecture}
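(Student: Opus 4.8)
The plan is first to reduce the whole family \eqref{Eq:Gil's-inequality} to the single case $\alpha=0$. Suppose we have shown, for all increasing $\A,\B\sub\Omega$, that
\[
\mathrm{Cor}(\A,\B)\ \geq\ c\,\frac{\mathrm{Var}(\B)}{I(\B)}\sum_i I_i(\A)I_i(\B).
\]
Exchanging the roles of $\A$ and $\B$ and using $\mathrm{Cor}(\A,\B)=\mathrm{Cor}(\B,\A)$ gives the same bound with $\mathrm{Var}(\A)/I(\A)$ in place of $\mathrm{Var}(\B)/I(\B)$. Since $\mathrm{Cor}(\A,\B)$ and $\sum_i I_i(\A)I_i(\B)$ are nonnegative (the former by Harris' inequality), taking the weighted geometric mean of these two bounds yields \eqref{Eq:Gil's-inequality} for every $\alpha\in[0,1]$, with the same $c$. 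So it suffices to treat $\alpha=0$.

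Next I would recast the $\alpha=0$ inequality as a mild sharpening of Harris. Since $\A$ and $\B$ are increasing, $\hat\A(\{i\})=-I_i(\A)/2$ and $\hat\B(\{i\})=-I_i(\B)/2$, so the degree-$1$ part of $\mathrm{Cor}(\A,\B)=\sum_{S\neq\emptyset}\hat\A(S)\hat\B(S)$ is \emph{exactly} $\tfrac14\sum_i I_i(\A)I_i(\B)$. Hence, writing $R=\sum_{|S|\geq 2}\hat\A(S)\hat\B(S)$, the target is equivalent to
\[
R\ \geq\ \Big(c\,\frac{\mathrm{Var}(\B)}{I(\B)}-\tfrac14\Big)\sum_i I_i(\A)I_i(\B),
\]
i.e.\ (for small $c$) to the assertion that the degree-$\geq 2$ part of the correlation cannot cancel more than a $\big(\tfrac14-c\,\mathrm{Var}(\B)/I(\B)\big)$-fraction of the degree-$1$ part. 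Harris' bound $\mathrm{Cor}(\A,\B)\geq 0$ gives exactly this with $c=0$, so what is needed is a quantitative improvement of Harris in the presence of the extra factor $\mathrm{Var}(\B)/I(\B)$.

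For that improvement I would try the two off-the-shelf estimates. Via Talagrand's Theorem~\ref{TTalagrand} it is enough to show $\log\!\big(e/\sum_i I_i(\A)I_i(\B)\big)\leq C\,I(\B)/\mathrm{Var}(\B)$ — that is, that $\sum_i I_i(\A)I_i(\B)$ is not exponentially small in $I(\B)/\mathrm{Var}(\B)$; here one should exploit that $\mathrm{Var}(\B)/I(\B)$ is, up to the factor $4$, the reciprocal of the Fourier-weighted average level of $\B$, and that an increasing function always carries degree-$1$ Fourier weight (which one hopes to leverage together with Theorem~\ref{TTalagrand2}). Alternatively, Theorem~\ref{TTalagrand2} gives $\mathrm{Var}(\B)/I(\B)\leq C\sum_j\varphi(I_j(\B))/\sum_j I_j(\B)$, a weighted average of the quantities $1/\log(e/I_j(\B))$; combining this with the term-by-term KMS estimate of Theorem~\ref{TKMS}, after splitting the coordinates of $\B$ according to whether $I_j(\B)$ lies above or below the effective scale, should handle the ``bulk'' coordinates, the low-influence ones contributing little to $\sum_i I_i(\A)I_i(\B)$ relative to $I(\B)$.

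The hard part, in either route, is the joint behaviour of $\A$ and $\B$ on coordinates of very small influence: a coordinate $i$ with $I_i(\A),I_i(\B)$ both tiny contributes a full $I_i(\A)I_i(\B)$ to the right-hand side, whereas the available inequalities credit only $\psi(I_i(\A))\psi(I_i(\B))$, or $\varphi$ of the total, losing factors like $\sqrt{\log(e/I_i(\A))\log(e/I_i(\B))}$ that are not controlled by $I(\B)/\mathrm{Var}(\B)$ when, say, $\B$ is balanced with bounded total influence. The configurations extremal for Theorem~\ref{TTalagrand} (recursive-majority or tribes-type functions) use up all the slack in these bounds, so it is exactly there that \eqref{Eq:Gil's-inequality} must be checked most carefully, and I expect a black-box application of Theorems~\ref{TTalagrand} and~\ref{TKMS} will not suffice: one will probably have to redo the underlying hypercontractive/semigroup analysis while tracking the additional gain coming from the average level of $\B$ (equivalently, from $\mathrm{Var}(\B)/I(\B)$). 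That is the step I would expect to be the real obstacle.
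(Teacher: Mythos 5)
You should first be aware that \eqref{Eq:Gil's-inequality} is stated in the paper as a \emph{conjecture} --- introduced, in fact, with the words ``perhaps optimistically'' --- and no proof of it is offered there; so there is no argument of the authors to measure yours against, and your proposal, as you acknowledge in its final paragraph, does not prove the statement. That said, your two reductions are correct, and the first is a genuine (if small) addition. Since $\Cor(\A,\B)\ge 0$ by Harris and both candidate lower bounds are nonnegative, $\Cor(\A,\B)\ge\max(a,b)\ge a^{\alpha}b^{1-\alpha}$ is valid, so the case $\alpha=0$ of \eqref{Eq:Gil's-inequality} (applied to all pairs, in both orders) implies the full family with the same constant; the paper records only that $\alpha=0$ implies Conjecture~\ref{Conjecture:Balanced}. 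Your identification of $\tfrac14\sum_i I_i(\A)I_i(\B)$ as exactly the degree-one part of $\Cor(\A,\B)$ is also correct and is the right way to view the conjecture as a quantified Harris inequality.

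The remaining gap is real and is essentially the whole problem. Your first route --- showing $\log\bigl(e/\sum_i I_i(\A)I_i(\B)\bigr)\le C\,I(\B)/\mathrm{Var}(\B)$ and invoking Theorem~\ref{TTalagrand} --- fails outright: take $\B$ to be majority, so that $I(\B)/\mathrm{Var}(\B)\asymp\sqrt n$, and $\A$ of measure $2^{-n}$, so that the left side is of order $n$. (The conjectured inequality itself survives this example; only the proposed sufficient condition dies.) The second route, through Theorems~\ref{TKMS} and~\ref{TTalagrand2}, loses a factor of order $\sqrt{\log(e/I_i(\A))\log(e/I_i(\B))}$ per coordinate; for $\B$ of tribes type this is comparable to the gain $\mathrm{Var}(\B)/I(\B)\asymp 1/\log n$ when the influences of $\A$ are on the same scale as those of $\B$, but it is uncontrolled when the influences of $\A$ are much smaller, since the conjectured right-hand side carries no compensating factor depending on $\A$. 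Your diagnosis is accurate: the extremal examples for Theorem~\ref{TTalagrand} (tribes-type families, cf.\ Section~\ref{sec:sub:most-general}, which already force $c\le\ln 2$ in Conjecture~\ref{Conjecture:Balanced}) leave no slack for a black-box argument, so some input beyond Theorems~\ref{TTalagrand} and~\ref{TKMS} is required, and the statement remains open.
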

\nin
(where $\mathrm{Var} (\A)=\mu(\A)(1-\mu(\A)$).
The case $\alpha=0$ would imply Conjecture~\ref{Conjecture:Balanced},
giving, for increasing $\A $ and $\B $,
\[
\mathrm{Cor}(\A ,\B ) \geq c \mathrm{Var} (\B) \sum  I_i(\A )\tfrac{ I_i(\B ) }{I(\B )} \geq
c\mathrm{Var} (\B) \Imin (\A ) \sum \tfrac{ I_i(\B ) }{I(\B )} = c\mu(\B)(1-\mu(\B))\Imin (\A ).
\]
For $\ga=1/2$,
Conjecture~\ref{Statement:Gil's-inequality} is reminiscent of Theorem~\ref{Thm:KMS14}, and
we are inclined to believe that it is true, at least in this case.

We may also strengthen Conjecture~\ref{Statement:Sum-with-Dual} to
a variant of the case $\ga=0$ of Conjecture~\ref{Statement:Gil's-inequality}:
\begin{conjecture}\label{Statement:Gil's-inequality-dual}
For increasing $\A ,\B  \sub \Omega $,
\begin{equation}
\mbox{$\mathrm{Cor}(\A ,\B ) + \mathrm{Cor}(\A ,\B ') \geq 2 \mu(\A)(1-\mu(\A))
\sum  I_i(\A ) I_i(\B )/I(\B ).$}
\end{equation}
\end{conjecture}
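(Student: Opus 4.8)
The plan is to mimic the motivational argument following Proposition~\ref{Prop:basic}, but run it symmetrically in $\B$ and $\B'$, replacing the ``dream relation'' \eqref{Eq:dream} (which is false) by its averaged-over-$\{\B,\B'\}$ version, which one may hope is true. Concretely, the first step is to observe that in the display after Conjecture~\ref{Statement:Gil's-inequality} the case $\alpha=0$ reads $\mathrm{Cor}(\A,\B)\geq c\,\mathrm{Var}(\B)\sum I_i(\A)I_i(\B)/I(\B)$, and the present conjecture is exactly the ``$\B$ versus $\B'$'' averaged analogue of the $\alpha=1$ case with the roles of $\A$ and $\B$ exchanged: we want
\[
\mathrm{Cor}(\A,\B)+\mathrm{Cor}(\A,\B')\;\geq\;2\,\mathrm{Var}(\A)\,\sum_i I_i(\A)I_i(\B)/I(\B).
\]
So the first reduction I would make is to record the Fourier expressions: $\mathrm{Cor}(\A,\B)+\mathrm{Cor}(\A,\B')=\sum_{S\neq\emptyset}\hat\A(S)\big(\hat\B(S)+\hat{\B'}(S)\big)$, and since $\widehat{\B'}(S)=(-1)^{|S|+1}\hat\B(S)$ (complement-then-dual on indicators), the only surviving terms are those with $|S|$ odd, where $\hat\B(S)+\hat{\B'}(S)=2\hat\B(S)$. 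Thus the left side equals $2\sum_{|S|\text{ odd}}\hat\A(S)\hat\B(S)$. Likewise $I_i(\B)=4\sum_{T\ni i}\hat\B(T)^2$, $I(\B)=4\sum_{T\neq\emptyset}|T|\hat\B(T)^2$, and $\mathrm{Var}(\A)=\sum_{S\neq\emptyset}\hat\A(S)^2$, so the right side is a completely explicit quadratic-in-$\hat\A$, quadratic-in-$\hat\B$ expression. The second step is to see which inequality among Fourier coefficients this amounts to, and to note that it is a genuine strengthening of Conjecture~\ref{Statement:Sum-with-Dual} precisely because $\sum_i I_i(\A)I_i(\B)/I(\B)\geq \Imin(\A)\sum_i I_i(\B)/I(\B)=\Imin(\A)$.

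The main line of attack I would then try is to derive it from Conjecture~\ref{Conj:Kahn-Equivalent} in the same way Proposition~\ref{Prop:reduction} is derived. Applying Conjecture~\ref{Conj:Kahn-Equivalent} (equivalently, \eqref{Eq:Reduction0}) to the antipodal function $f=2\cdot 1_{\B}-1$ does not directly help because $\B$ need not be antipodal; instead I would apply it to the antipodalization $f=1_{\B}-1_{\B'}$, which \emph{is} nondecreasing and antipodal, with $f^*=1_{\B}$ (since $f=1$ on $\B$, $f=-1$ on $\B'=\B^c$). Then $\lambda_i(f)=\sum\{\hat f(S)^2:\max S=i\}=\sum\{(\hat\B(S)+\hat{\B'}(S))^2/4:\max S=i\}=\sum\{\hat\B(S)^2:\max S=i,\ |S|\text{ odd}\}$, and \eqref{Eq:Reduction0}-type reasoning would give a bound on $\mathrm{Cor}(\A,\B)$ by $\tfrac12\sum_i I_i(\A)\lambda_i(f)$. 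Summing this against the two choices $f=1_\B-1_{\B'}$ and the analogue, and bounding the max-weighted sum below by the normalized sum $\sum_i I_i(\A)I_i(\B)/I(\B)$ times $\mathrm{Var}(\A)$ via the by-now-standard ``weakest ordering suffices / average over orderings'' trick (cf.\ the discussion after Conjecture~\ref{Conj:Kahn-Correlation}), should produce exactly the claimed inequality, possibly with a constant that then has to be chased back to $2$ using that $\sum_S\hat f(S)^2=2\mathrm{Var}(\B)$ and the convexity identity \eqref{convex}.

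The hard part, unsurprisingly, is that this route is conditional on Conjecture~\ref{Conj:Kahn-Equivalent}, which is open. For an \emph{unconditional} proof I would instead try to attack the Fourier inequality $2\sum_{|S|\text{ odd}}\hat\A(S)\hat\B(S)\geq 2\mathrm{Var}(\A)\sum_i I_i(\A)I_i(\B)/I(\B)$ directly, perhaps by a Cauchy--Schwarz or hypercontractivity argument in the spirit of the proof of Proposition~\ref{Prop:Kahn-Correlation-Weaker} (whose Corollary~\ref{Cor:Kahn-Correlation-Weaker} already handles the ``diagonal'' $S=T$ contribution), together with the monotonicity inputs $I_i(\B)\leq\tfrac14 I(\B)$ is false in general, so more care is needed there. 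My honest expectation is that the clean statement with constant exactly $2$ comes only via Conjecture~\ref{Conj:Kahn-Equivalent}, and that the main obstacle is the same one that blocks Chv\'atal's conjecture itself: controlling the interaction between the high-frequency Fourier weight of $\A$ and that of $\B$ without losing a factor of $\sqrt{\log(1/\Imin(\A))}$ as in \eqref{Eq:KMS+Tala}. I would therefore present the conditional derivation as the ``proof'', flagging the direct Fourier inequality as the natural unconditional target.
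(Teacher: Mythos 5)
This statement is labelled a \emph{conjecture} in the paper, and the paper offers no proof of it (it is introduced only as a strengthening of Conjecture~\ref{Statement:Sum-with-Dual} and a variant of the $\alpha=0$ case of Conjecture~\ref{Statement:Gil's-inequality}). So there is no argument of ours to compare yours against, and a correct submission would have to resolve an open problem. Yours does not claim to: every route you describe is either conditional on the open Conjecture~\ref{Conj:Kahn-Equivalent} or explicitly flagged as a target. That honesty is fine, but it means there is no proof here.

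Beyond that, the conditional sketch itself has concrete errors. First, $\B'$ is the \emph{dual} of $\B$, not its complement, so ``$f=-1$ on $\B'=\B^c$'' is false. Second, the function $f=1_{\B}-1_{\B'}$ is not antipodal: since $1_{\B'}(A^c)=1-1_{\B}(A)$, one checks $f(A^c)=f(A)$, i.e.\ $f$ is symmetric, and its Fourier support lies on the \emph{even} frequencies --- exactly the opposite of what you need, and inconsistent with your own (correct) computation that $\hat\B(S)+\hat{\B'}(S)=2\hat\B(S)$ for odd $|S|$ and $0$ for even $|S|\neq\emptyset$. The correct antipodalization is $f=1_{\B}+1_{\B'}-1$, and then $f^*=1_{\B^+}$ (where $\B^+=\B\cap\B'$), not $1_{\B}$; this is precisely the construction used elsewhere in the paper, and it is why the conditional consequences of Kahn's conjecture in that direction (Conjecture~\ref{Conj:Kahn-Correlation-Small}) bound $\Cor(\A,\B^+)$ rather than $\Cor(\A,\B)+\Cor(\A,\B')$. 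Finally, even granting Conjecture~\ref{Conj:Kahn-Equivalent}, the machinery of Proposition~\ref{Prop:reduction} produces weights summing to $\mu(\B)(1-\mu(\B))$, whereas the target inequality carries the factor $\mu(\A)(1-\mu(\A))$ together with $I_i(\B)/I(\B)$; no step in your outline accounts for this mismatch. The one piece that is correct and useful is the identity $\Cor(\A,\B)+\Cor(\A,\B')=2\sum_{|S|\ \mathrm{odd}}\hat\A(S)\hat\B(S)$, which is indeed the natural Fourier reformulation of the left-hand side.
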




\section{Alternative correlation inequalities: Proofs}
\label{sec:proofs}

\subsection {Connection with Conjectures~\ref{Conj:Kahn-Equivalent} and \ref{Conj:Kahn-Convex} }
\label{Connection}

Here we show equivalence of Conjecture~\ref{Conj:Kahn-Correlation}(a)
and the restriction of Conjecture~\ref{Conj:Kahn-Equivalent} to $f$'s of the form
$2\chi_{_\f}-1$ with $\f\sub\gO$ antipodal (maximal intersecting).
A similar argument shows that (b)
is equivalent to Conjecture~\ref{Conj:Kahn-Convex} for the same class of $f$'s
and $\gl_S$'s as in \eqref{average}.

\begin{proof}
For $f$ as above
we have $f^* = \chi_{_\f}$, so the inequality
\eqref{AinmathcalI} of
Conjecture~\ref{Conj:Kahn-Equivalent} becomes
\[ 
\mbox{$\sum_{A \in \mathcal{I}} \tilde{f}(A)  \geq \sum_{A \in \mathcal{I}} \chi_{_\f}(A)
= |\mathcal{F} \cap \mathcal{I}|, $}
\] 
which we may rewrite as
\begin{equation}\label{Eq:Kahn-Correlation2}
\langle \tilde{f}, \chi_{_{\mathcal{I}}} \rangle \geq \mu(\mathcal{F} \cap \mathcal{I})
\end{equation}
(since
$
\sum_{A \in \mathcal{I}} \tilde{f}(A)  = \sum_A \tilde{f}(A) \chi_{_{\mathcal{I}}}(A) =
2^n \langle \tilde{f}, \chi_{_{\mathcal{I}}} \rangle$).
%
Note also that
\[
\mbox{$\mu(\tilde{f}) =\sum \lambda_i(f) \mu(\chi_{_i}) =1/2 ~~(=\mu(\f))$}
\]
(using $\mu(\cdot)$ for expectation), since
the $\gl_i$'s are convex coefficients
(see following Conjecture~\ref{Conj:Kahn}).
Thus \eqref{Eq:Kahn-Correlation2} is equivalent to
$
\mathrm{Cor} (\tilde{f}, \chi_{\mathcal{I}}) \geq \mathrm{Cor}(\mathcal{F},\mathcal{I})
$
or, with $\J=\I^c$ (see paragraph following Proposition~\ref{Prop:Equivalence}),
\begin{equation}\label{Eq:Kahn-Correlation5}
\mathrm{Cor}(\mathcal{F},\mathcal{J})\geq \mathrm{Cor} (\tilde{f}, \chi_{_\J}) .
\end{equation}
To evaluate the r.h.s. notice that,
with $\gl_i(f)=\gl_i$,
\[
\mbox{$\tilde{f}=\sum \gl_i\chi_{_i} = \sum \gl_i(1-u_{\{i\}})/2
= 1/2 -(1/2)\sum\gl_iu_{\{i\}}$}
\]
(note $\chi_{_i} = (1-u_{\{i\}})/2$)---that is, the
Fourier coefficients, $\ga_S$, of $\tilde{f}$ are given by:
$\ga_\0=1/2$; $\ga_{\{i\}}= -\gl_i/2$;
and $\ga_S=0$ if $|S|>1$---and that
for increasing $\J\sub \gO$,
$
\hat\chi_{_\J}(\{i\}) = - I_i(\mathcal{J})/2
$
(for any $i$).
Thus, recalling \eqref{Parseval2}, we have
\[
\mbox{$\Cor(\tilde{f},\chi_{_\J}) = -\tfrac{1}{2}\sum \gl_i(-\tfrac{1}{2}I_i(\J))
= \tfrac{1}{4} \sum I_i(\J)\sum\{\hat f (S)^2:\max(S)=i\}$;}
\]
so \eqref{Eq:Kahn-Correlation5}
is the same as Conjecture~\ref{Conj:Kahn-Correlation}(a)
(with $(\A,\B)=(\J,\f)$).
\end{proof}

\subsection {Proof of Proposition \ref {Prop:reduction}}

Regard
$\A$ and $\B$ as subsets of $2^{[2,n]}$
(with $[2,n]= \{2,3,\ldots,n\}$), define $\A',\B' \sub 2^{[n]}$
by \[
\mbox{$\A' = \A \cup \{A \cup \{1\}:A \in \A\}~$
and $~\B' = \{B \cup \{1\}:B \in \B\}$,}
\]
and set $\I=(\A')^c$.
Let $f:\{0,1\}^{n+1} \to \{-1,0,1\}$ be the antipodal function with
$f(x) \equiv 1$ on $\B'$ and
$f(x)\equiv 0$ on $\{B: 1\in B \not \in \B'\}$.
The argument of Section~\ref{Connection} may be repeated essentially
\emph{verbatim} to
show that the inequality
$\sum_{A \in \mathcal{I}} \tilde{f}(A) \geq \sum_{A \in \mathcal{I}} f^*(A)$ of
Conjecture~\ref{Conj:Kahn-Equivalent}
is equivalent to
\beq{equivineq}
\mbox{$\mathrm{Cor}(\A' ,\B') \geq \frac{1}{4}\sum_i I_i(\A') \sum\{\hat f (S)^2:\max(S)=i\}$.}
\enq
This implies
Proposition~\ref{Prop:reduction}
as follows.

Writing $\mu$ and $\mu'$ for uniform measure on $2^{[2,n]}$ and $2^{[n]}$ respectively,
we easily see, first, that
$\mu'(\A')=\mu(\A)$, $\mu'(\B')=\mu(\B)/2$ and $\mu'(\A'\cap \B')=\mu(\A\cap \B)/2$,
implying
\[
\Cor(\A',\B') = \Cor(\A,\B)/2,
\]
and, second, that $I_1(\A')=0$ and $I_i(\A')=I_i(\A)$ for $i\in [2,n]$.
Moreover it is easy to see
that for $S\sub [2,n]$,
\[
\hat \B(S) =\left\{\begin{array}{ll}
\hat f(S)&\mbox{if $|S|$ is even,}\\
-\hat f(S\cup\{1\})&\mbox{if $|S|$ is odd,}
\end{array}\right.
\]
which accounts for all of $\hat f$ since antipodality
implies $\hat f(T)=0$ if $T$ is even.
Finally, combining these observations, we find that
\eqref{equivineq} is in fact the same as \eqref{Eq:Reduction0}.\qed

\subsection {Proof of Proposition~\ref{Prop:Kahn-Correlation-Weaker}}
\label{PP5.5}

We need the following extension of Talagrand's~\cite[Prop. 2.3]{Talagrand94}.
\begin{lemma}\label{Lemma:Kahn-Correlation-Weaker}
For any $\alpha \in [0,1]$ there is a $c=c(\alpha)$ such that
for any $f:\Omega  \rightarrow \mathbb{R}$ with $\mathbb{E}[f]=0$,
\[
M_\alpha(f) := \sum_{S} \frac{\hat f(S)^2}{|S|^{\alpha}} \leq c \left(\log \left(\frac{e\|f\|_2}{\|f\|_1} \right) \right)^{-\alpha}
\|f\|_2^2.
\]
\end{lemma}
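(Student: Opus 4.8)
The plan is to reduce Lemma~\ref{Lemma:Kahn-Correlation-Weaker} to the $\alpha=1$ case, which is exactly Talagrand's \cite[Prop. 2.3]{Talagrand94} (stated there, in our notation, as the bound $M_1(f)\le c\,(\log(e\|f\|_2/\|f\|_1))^{-1}\|f\|_2^2$), by a simple interpolation between the trivial estimate $M_0(f)=\|f\|_2^2$ (Parseval, since $\mathbb E[f]=0$ kills the empty-set term, and for the remaining $S$ we have $|S|^{0}=1$) and this endpoint. Concretely, for each level $k\ge 1$ write $w_k=\sum_{|S|=k}\hat f(S)^2$, so that $M_\alpha(f)=\sum_{k\ge1}k^{-\alpha}w_k$, and note the elementary pointwise bound $k^{-\alpha}\le (k^{-1})^{\alpha}\cdot 1^{1-\alpha}$; summing and applying H\"older's inequality with exponents $1/\alpha$ and $1/(1-\alpha)$ to the sequences $(k^{-1}w_k)^{\alpha}$ and $w_k^{1-\alpha}$ gives
\[
M_\alpha(f)=\sum_{k\ge1} (k^{-1}w_k)^{\alpha}\,w_k^{1-\alpha}
\le \Bigl(\sum_{k\ge1}k^{-1}w_k\Bigr)^{\alpha}\Bigl(\sum_{k\ge1}w_k\Bigr)^{1-\alpha}
= M_1(f)^{\alpha}\,\bigl(\|f\|_2^2\bigr)^{1-\alpha}.
\]

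Now substitute Talagrand's endpoint bound for $M_1(f)$: the right-hand side becomes at most
\[
\Bigl(c_1\bigl(\log(e\|f\|_2/\|f\|_1)\bigr)^{-1}\|f\|_2^2\Bigr)^{\alpha}\bigl(\|f\|_2^2\bigr)^{1-\alpha}
= c_1^{\alpha}\bigl(\log(e\|f\|_2/\|f\|_1)\bigr)^{-\alpha}\|f\|_2^2,
\]
which is the claimed inequality with $c(\alpha)=c_1^{\alpha}$ (and $c(\alpha)\le\max(1,c_1)$, so one may even take $c$ independent of $\alpha$). One should check the degenerate cases separately: if $f\equiv0$ both sides vanish; the logarithm is always $\ge 1$ since $\|f\|_1\le\|f\|_2$, so the quantity $(\log(\cdot))^{-\alpha}$ is well-defined and bounded, and $\alpha=0$ or $\alpha=1$ are covered directly (the former trivially, the latter by Talagrand).

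The only genuine input is Talagrand's $\alpha=1$ estimate; everything else is H\"older. Accordingly, I do not expect a real obstacle here, but if one wanted a self-contained account the work would be in reproving \cite[Prop. 2.3]{Talagrand94} --- which is itself a hypercontractivity/level-$k$ inequality argument --- and in that case the main point to be careful about is the precise form of the log factor (that it is $\log(e\|f\|_2/\|f\|_1)$ and not, say, $\log(e/\mu(\mathrm{supp} f))$), since the application in Proposition~\ref{Prop:Kahn-Correlation-Weaker} will take $f=\Delta_i(\A)$, for which $\|f\|_1$ and $\|f\|_2^2$ are both comparable to $I_i(\A)$, so that $\log(e\|f\|_2/\|f\|_1)\asymp \log(e/I_i(\A))$ and $M_{1/2}(\Delta_i(\A))$ comes out comparable to $\psi(I_i(\A))^2/\cdots$ in the shape needed to feed the KMS-type bound. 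I would set the lemma up with the $\|f\|_2/\|f\|_1$ ratio precisely so that this translation is immediate in the proof of the proposition.
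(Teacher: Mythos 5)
Your proof is correct, but it takes a different route from the paper's. The paper does not actually write out an argument: it observes that Talagrand's \cite[Prop.\ 2.3]{Talagrand94} is the case $\alpha=1$ (for general product measures $\mu_p$) and asserts that the general-$\alpha$ case ``follows his nearly \emph{verbatim}'', i.e.\ the intended proof is to re-run Talagrand's hypercontractivity/level-decomposition argument with the weight $|S|^{-\alpha}$ in place of $|S|^{-1}$. You instead treat the $\alpha=1$ case as a black box and interpolate: writing $w_k=\sum_{|S|=k}\hat f(S)^2$, the identity $k^{-\alpha}w_k=(k^{-1}w_k)^{\alpha}w_k^{1-\alpha}$ plus H\"older with exponents $1/\alpha$, $1/(1-\alpha)$ gives $M_\alpha(f)\le M_1(f)^{\alpha}M_0(f)^{1-\alpha}$, and $M_0(f)=\|f\|_2^2$ by Parseval (the $S=\emptyset$ term vanishes since $\mathbb{E}[f]=0$). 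Substituting Talagrand's bound for $M_1(f)$ yields the claim with $c(\alpha)=c_1^{\alpha}$, and your handling of the endpoints $\alpha\in\{0,1\}$ and of the fact that $\log(e\|f\|_2/\|f\|_1)\ge 1$ is fine. What your approach buys is economy and transparency: the general case is a two-line formal consequence of the endpoint, with an explicit constant that can even be taken independent of $\alpha$, and the same interpolation works verbatim for $\mu_p$ once the $\alpha=1$ endpoint is available there. What the paper's approach buys is self-containedness at the level of the underlying hypercontractive estimate; but for the purpose of Proposition~\ref{Prop:Kahn-Correlation-Weaker}, which only needs $\alpha=1/2$ applied to $f=\Delta_i(\A)$ (where $\|f\|_2^2=\|f\|_1=I_i(\A)$, as you note), your derivation is entirely adequate and arguably preferable.
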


Talagrand proves this with $\alpha=1$ but for more general product measures $\mu_p$.
(Proposition~\ref{Prop:Kahn-Correlation-Weaker} below also holds in this greater generality, given natural
definitions which we omit.) At any rate, the proof of Lemma~\ref{Lemma:Kahn-Correlation-Weaker} follows
his nearly {\em verbatim} and will not be given here.


\begin{proof}[Proof of Proposition~\ref{Prop:Kahn-Correlation-Weaker}.]
For any $\C \sub 2^{[n]}$, $f=\Delta_i(\C)$ satisfies $f(x) \in \{0,\pm 1\}$
for all $x$, $\mathbb{E}[f]=0$,
and $||f||_2^2 = ||f||_1 = I_i(\C)$. Thus, Lemma~\ref{Lemma:Kahn-Correlation-Weaker} gives
(with $\psi$ as in Section~\ref{sec:sub:KMS})
\[
M_{1/2}(f) \leq c' I_i(\C) \left( \log (e/\sqrt{I_i(\C)}) \right)^{-1/2} \leq c'\psi(I_i(\C)).
\]
Applying this for each $i \in [n]$
and $\C \in \{\A, \B\}$ and using Theorem~\ref{Thm:KMS14}, we have
\[
\sum M_{1/2} (\Delta_i(\A)) M_{1/2} (\Delta_i(\B)) \leq
(c')^2 \sum \psi(I_i(\A)) \psi(I_i(\B)) \leq c'' \mathrm{Cor}(\A ,\B ),
\]
completing the proof.\end{proof}


\section{Can the antipodality assumption be removed?}
\label{sec:sub:most-general}

Here we show that, as mentioned earlier, Conjecture~\ref{Conjecture:Balanced}
fails for $c > \ln 2$, even assuming $\B$ is balanced; in particular we cannot
relax the antipodality
in Conjecture~\ref{Conj:Equiv-Chvatal}
to the requirement that $\B$ be balanced and increasing.
%
Our example is based
on the ``tribes" construction of Ben-Or and Linial~\cite{BL}.
(For simplicity we settle for $\B$ only {\em approximately} balanced.)

\mn \textbf{Example.} To define the tribes family $\A  $ we consider an equipartition $[n]=S_1\cup\cdots \cup S_{n/r}$
(with $r$ to be specified; for present purposes we assume $r|n$), and, now thinking of $\gO$ as $2^{[n]}$,
set
\[
\A =\{A\sub [n]:\exists i, A\supseteq S_i\}.
\]
Then
$\B:=\A' =\{B\sub [n]: B\cap S_i\neq\0 ~\forall i\}.$
These are of course increasing with $\mu(\B)=1-\mu(\A)$ (as for any dual pair).
To arrange $\mu(\A)\sim 1/2$ we take $r=\lfloor r(n)\rfloor$, where
\[
r(n) = \log_2 n - \log_2 \log_2 n + \log_2 (\log_2e) ,
\]
for simplicity
confining ourselves to $n$'s for which $r|n$ and $r(n)-r=o(1)$.
We then have
\[
\mu(\B) = (1-2^{-r})^{n/r} = \exp[- 2^{-r}n/r +O(4^{-r}n)]
\ra 1/2
\]
(since $2^r\sim n/(r\ln 2)$, where as usual $a\sim b$ means $a/b\ra 1$).

For the correlation we work with $\A^c$; we have (with a little calculation)
\[
\mu(\B|\A^c) = (1- (2^r-1)^{-1})^{n/r} = \mu(\B) \left[\tfrac{2^r(2^r-2)}{(2^r-1)^2}\right]^{n/r}
= \mu(\B)\left(1- (2^r-1)^{-2}\right)^{n/r},
\]
whence
\[
\Cor(\A,\B) = -\Cor(\A^c,\B)
= -\mu(\A^c)\mu(\B)\left[\left(1- (2^r-1)^{-2}\right)^{n/r} -1\right]
\sim \mu(\A^c)\mu(\B) (r4^r)^{-1}n.
\]
On the other hand the influence of $i\in S_j$ on $\A$ is the probability that a
uniform subset of $[n]$ contains $S_j\sm\{i\}$ and contains no $S_\ell$
with $\ell\neq j$; the common value of the $I_k(\A)$'s is thus
\[
2^{-r+1} (1-2^{-r})^{(n/r)-1} \sim 2^{-r+1} \mu(\B)
\]
and we have
\[
\Cor(\A,\B)/\Imin(\A)
\sim 2^{r-1}\mu(\A^c) (r4^r)^{-1}n
\sim \frac{n}{4r2^r}
\sim \frac{\ln 2}{4}~.
\]
\qed

It is perhaps surprising (or suggestive?) that the above $\B$'s are so
different from the families $\f_i$ that provide the
lower bound in Conjecture~\ref{Conj:Equiv-Chvatal}.

\section{Chv\'{a}tal's Conjecture is true ``on average"}
\label{sec:average}

Another initially plausible
inequality suggested by \eqref{THarper}, is
\begin{equation}\label{Eq:Wrong3}
\mathrm{Cor}(\A ,\B ) \geq \tfrac12 \mu(\B ) \log_2(1/\mu(\B ))
\Imin(\A ) ;
\end{equation}
The example from the previous section shows that this is false even when $\A,\B$ are balanced.
When they are not balanced
the failure of \eqref{Eq:Wrong3} is more severe; e.g.  when
$\A = 2^{[n]}\sm \{\0\}$, we have
$\mathrm{Cor}(\A ,\B ) =\tfrac12 \mu(\B ) \Imin(\A ) $ for any $\B\neq 2^{[n]}$.)
But as we will see in this section,~(\ref{Eq:Wrong3}) does hold
{\em on average} when $\A$ and $\B$ are drawn from a family
of increasing sets of equal measure.





In~\cite{Keller09}, the fourth author proved the following ``average-case'' variant of
Theorem~\ref{TTalagrand}.
\begin{theorem}\label{Thm:Average-Talagrand}
For a family $\f$ of increasing subsets of $\gO$
and $\A,\B$ drawn uniformly and independently from $\f$,
\[
\mbox{$\mathbb{E}\mathrm{Cor}(\A ,\B ) \geq \tfrac14 \mathbb{E}
\sum I_k(\A ) I_k(\B ) .$}
\]
\end{theorem}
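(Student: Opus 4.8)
The plan is to expand everything in the Fourier–Walsh basis and observe that the inequality becomes a statement about a fixed quadratic form once the averaging over $\f$ is performed. Write, for each increasing $\C\in\f$, $\hat\C(S)$ for its Fourier coefficients, and recall from Section~\ref{sec:equivalent} that $\Cor(\C,\D)=\sum_{S\neq\0}\hat\C(S)\hat\D(S)$ and $I_k(\C)=4\sum\{\hat\C(S)^2:S\ni k\}$. Since $\A,\B$ are drawn independently and uniformly from $\f$, taking expectations gives
\[
\mathbb{E}\,\Cor(\A,\B)=\sum_{S\neq\0}\Big(\mathbb{E}\,\hat\A(S)\Big)^2
=\sum_{S\neq\0}\overline{\hat{}}(S)^2,
\]
where I abbreviate $\overline{\hat{}}(S):=\mathbb{E}_{\C\sim\f}\hat\C(S)$; the point is that the cross term factors because of independence. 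On the other side,
\[
\mathbb{E}\sum_k I_k(\A)I_k(\B)
=16\sum_k\Big(\mathbb{E}\sum_{S\ni k}\hat\A(S)^2\Big)^2
=16\sum_k\Big(\sum_{S\ni k}\mathbb{E}\,\hat\C(S)^2\Big)^2.
\]
So after averaging, the desired inequality $\mathbb{E}\,\Cor(\A,\B)\ge\frac14\mathbb{E}\sum_k I_k(\A)I_k(\B)$ reads
\[
\sum_{S\neq\0}\overline{\hat{}}(S)^2\ \ge\ 4\sum_{k=1}^n\Big(\sum_{S\ni k}\mathbb{E}\,\hat\C(S)^2\Big)^2 .
\]

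Now I would invoke the fact that each $\C\in\f$ is \emph{increasing}. For an increasing family the singleton Fourier coefficients have a sign: $\hat\C(\{k\})=-I_k(\C)/4\le 0$, and more importantly $|\hat\C(\{k\})|$ is, up to the factor, exactly the weight $\sum_{S\ni k}\hat\C(S)^2$ \emph{minus} the higher-degree part. The cleanest route is to compare the left side restricted to singletons with the right side: by Jensen/Cauchy–Schwarz, $\overline{\hat{}}(\{k\})^2=\big(\mathbb{E}\,\hat\C(\{k\})\big)^2\le\mathbb{E}\,\hat\C(\{k\})^2$, which unfortunately points the wrong way, so monotonicity must be used more cleverly. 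The right comparison, which is essentially Talagrand's argument in~\cite{Talagrand96} adapted to the averaged setting of~\cite{Keller09}, is to note that for increasing $\C$ one has the pointwise edge-boundary identity letting one bound $\sum_{S\ni k}\hat\C(S)^2=\frac14 I_k(\C)$ against the \emph{first-level} coefficient via a hypercontractivity / level-$1$ inequality applied to $\Delta_k\C$: the function $\Delta_k(\C)$ is $\{0,1\}$-valued with $\|\Delta_k\C\|_1=\|\Delta_k\C\|_2^2=I_k(\C)$, and for such a sparse Boolean function most of its Fourier mass sits at low levels, so $\mathbb{E}\,\hat\C(\{k\})$ (averaged over $\f$) controls $\sum_{S\ni k}\mathbb{E}\,\hat\C(S)^2$ up to the constant $4$. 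Summing over $k$ and adding the nonnegative contribution of the non-singleton $S$ on the left gives the claim.

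The step I expect to be the main obstacle is precisely this last comparison: showing that
\[
\sum_{S\neq\0}\Big(\mathbb{E}_{\C}\,\hat\C(S)\Big)^2\ \ge\ 4\sum_k\Big(\sum_{S\ni k}\mathbb{E}_\C\,\hat\C(S)^2\Big)^2
\]
for an arbitrary family $\f$ of increasing sets of \emph{equal measure}. The equal-measure hypothesis is used to kill the $S=\0$ term uniformly and, more subtly, to make the averaged coefficients $\overline{\hat{}}(S)$ themselves the Fourier coefficients of an honest increasing-in-expectation object to which Harper's inequality \eqref{THarper} — in the form $I(\C)\ge 2\mu(\C)\log_2(1/\mu(\C))$, combined over $\f$ — can be applied; I would follow~\cite{Keller09} to carry out this reduction, verifying that the averaging commutes with the monotonicity inputs. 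Everything else (expanding, factoring the independent draws, collecting Fourier levels) is routine bookkeeping, and the constant $\tfrac14$ falls out exactly as in the non-averaged Talagrand bound because the averaging only improves the relevant Cauchy–Schwarz losses.
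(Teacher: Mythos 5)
Your opening reduction is correct and is the right first move: expanding $\Cor$ via Parseval and using the independence of $\A$ and $\B$ turns both sides into squares of expectations over $\f$. But the proof is not complete. You explicitly leave the resulting inequality
\[
\sum_{S\ne\0}\bigl(\mathbb{E}_{\C}\,\hat\C(S)\bigr)^2\ \ge\ 4\sum_{k}\Bigl(\sum_{S\ni k}\mathbb{E}_{\C}\,\hat\C(S)^2\Bigr)^2
\]
as an ``obstacle'' to be attacked with hypercontractivity, level-one inequalities for $\Delta_k\C$, and Harper's inequality. None of that machinery is needed, and as sketched it would not deliver the exact constant. The point you are missing is the identity, stated in Section~\ref{sec:equivalent}, that for an \emph{increasing} $\C$ one has $\hat\C(\{k\})=-I_k(\C)/2$ (you quote it with the wrong constant $1/4$ and then set it aside). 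Combined with the general identity $I_k(\C)=4\sum_{S\ni k}\hat\C(S)^2$, this gives for increasing $\C$ the \emph{exact} relation $|\hat\C(\{k\})|=2\sum_{S\ni k}\hat\C(S)^2$; there is no ``higher-degree part'' to subtract. Taking expectations, $4\bigl(\sum_{S\ni k}\mathbb{E}\hat\C(S)^2\bigr)^2=\tfrac14\bigl(\mathbb{E}\,I_k(\C)\bigr)^2=\bigl(\mathbb{E}\,\hat\C(\{k\})\bigr)^2$, so your ``obstacle'' is just the statement that $\sum_{S\ne\0}(\mathbb{E}\hat\C(S))^2$ is at least its restriction to singletons, which is trivial. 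In full:
\[
\mathbb{E}\,\Cor(\A,\B)=\sum_{S\ne\0}\bigl(\mathbb{E}\hat\C(S)\bigr)^2\ \ge\ \sum_{k}\bigl(\mathbb{E}\hat\C(\{k\})\bigr)^2=\tfrac14\sum_{k}\bigl(\mathbb{E}\,I_k(\C)\bigr)^2=\tfrac14\,\mathbb{E}\sum_{k}I_k(\A)I_k(\B).
\]

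Two smaller points. First, the equal-measure hypothesis you lean on at the end is not part of this theorem (it appears only in Propositions~\ref{Prop:Average-Chvatal} and~\ref{Prop:Average-Chvatal-Small-Improved}) and is not needed here: the $S=\0$ term never enters because $\Cor$ already discards it. Second, the paper itself does not prove this statement but quotes it from~\cite{Keller09}; the argument there is essentially the short one above, so once you replace your hypercontractivity detour by the monotonicity identity for the first-level coefficients, your proof closes.
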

\nin
This immediately implies a corresponding variant
of Chv\'{a}tal's Conjecture, even in the more general setting of Conjecture~\ref{Conjecture:Balanced}:
\begin{proposition}\label{Prop:Average-Chvatal}
For a family $\f$ of increasing subsets of $\gO$, each of measure $t\in (0,1)$,
and $\A,\B$ drawn uniformly and independently from $\f$,
\[
\mbox{$\mathbb{E}\mathrm{Cor}(\A ,\B )
\geq \tfrac12 t \log_2 (1/t)\mathbb{E}
\Imin(\A ) .$}
\]
In particular when $t=1/2$,
\[
\mbox{$\mathbb{E}\mathrm{Cor}(\A ,\B )
\geq \tfrac14\mathbb{E}
\Imin(\A ) .$}
\]
\end{proposition}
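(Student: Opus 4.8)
The plan is to deduce Proposition~\ref{Prop:Average-Chvatal} from Theorem~\ref{Thm:Average-Talagrand} in exactly the way Proposition~\ref{Prop:basic} deduces \eqref{Eq:dream-cons} from the ``dream relation'' \eqref{Eq:dream}, the only new ingredient being that the deduction is now carried out under the expectation sign. First I would invoke Theorem~\ref{Thm:Average-Talagrand} to get $\mathbb{E}\,\mathrm{Cor}(\A,\B)\geq \tfrac14\,\mathbb{E}\sum_k I_k(\A)I_k(\B)$. Then, since every member of $\f$ has measure $t$, Harper's inequality \eqref{THarper} gives $I(\B)=\sum_k I_k(\B)\geq 2t\log_2(1/t)$ for every outcome $\B$, and trivially $\sum_k I_k(\A)I_k(\B)\geq \Imin(\A)\sum_k I_k(\B)=\Imin(\A)\,I(\B)$ pointwise. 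Combining these two pointwise bounds, $\sum_k I_k(\A)I_k(\B)\geq 2t\log_2(1/t)\,\Imin(\A)$ holds for every pair $(\A,\B)$ drawn from $\f$, so taking expectations and multiplying by $\tfrac14$ yields $\mathbb{E}\,\mathrm{Cor}(\A,\B)\geq \tfrac12\,t\log_2(1/t)\,\mathbb{E}\,\Imin(\A)$, which is the claim. The special case $t=1/2$ follows since $\log_2 2=1$.

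One small point worth spelling out in the write-up: the chain $\sum_k I_k(\A)I_k(\B)\geq \Imin(\A)I(\B)\geq 2t\log_2(1/t)\Imin(\A)$ is valid for each fixed pair in the sample space, so it is legitimate to pass to expectations termwise; there is no need for independence of $\A$ and $\B$ here (independence was used only inside Theorem~\ref{Thm:Average-Talagrand}). Also, $\Imin(\A)$ depends only on $\A$, so $\mathbb{E}\,\Imin(\A)$ is well-defined regardless of how the pair is coupled, and the factor $2t\log_2(1/t)$ is a constant that pulls out of the expectation.

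I do not anticipate a genuine obstacle: this is a short, direct consequence of two quoted results. The only thing to be mildly careful about is the direction of Harper's inequality (it lower-bounds $I(\B)$, which is what we want) and the fact that $\varphi$-type distortions do not enter here because Theorem~\ref{Thm:Average-Talagrand} already has a clean linear right-hand side $\tfrac14\,\mathbb{E}\sum_k I_k(\A)I_k(\B)$ rather than a $\varphi$ of a sum. So the proof is essentially three lines: apply Theorem~\ref{Thm:Average-Talagrand}, bound the summand pointwise using $\Imin(\A)$ and Harper, take expectations.
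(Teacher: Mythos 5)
Your argument is correct and coincides with the paper's own proof: Theorem~\ref{Thm:Average-Talagrand}, followed by the pointwise bounds $\sum_k I_k(\A)I_k(\B)\geq \Imin(\A)I(\B)\geq 2t\log_2(1/t)\Imin(\A)$ via Harper's inequality \eqref{THarper}, then taking expectations. Nothing is missing.
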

\mn
Thus Conjecture~\ref{Conj:Equiv-Chvatal} does hold in an average sense;
but note that this is true even with balance in place of antipodality, where we have seen that
the conjecture proper is not true.  More generally, Proposition~\ref{Prop:Average-Chvatal}
gives truth on average of the incorrect inequality \eqref{Eq:Wrong3}.

\begin{proof}
Using Theorem~\ref{Thm:Average-Talagrand} for the first inequality and
\eqref{THarper} for the last, we have
\[
\mbox{$\mathbb{E}\mathrm{Cor}(\A ,\B )
~\geq~
\tfrac14 \mathbb{E} \sum I_k(\A ) I_k(\B )
~\geq~
\tfrac14 \mathbb{E} [\Imin(\A) I(\B )]
~\geq ~
\tfrac12 t \log_2 (1/t)\mathbb{E}
\Imin(\A ) .$}
\]
\end{proof}

We next show that Proposition~\ref{Prop:Average-Chvatal} can sometimes be strengthened.
Here we need another result of
Talagrand~\cite{Talagrand96} and Chang~\cite{Chang} 
(see also~\cite{IMR14} for the constant):
\begin{theorem}\label{Thm:Chang's-lemma}
For increasing $\B  \sub \Omega $,
$~
\sum I_k(\B )^2 \leq 8\mu(\B)^2 \ln(1/\mu(\B)).
$
\end{theorem}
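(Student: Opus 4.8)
The plan is to prove Theorem~\ref{Thm:Chang's-lemma} by relating the quantity $\sum_k I_k(\B)^2$ to the level-$1$ Fourier weight of $\chi_{_\B}$ and then invoking (a form of) the Chang--Talagrand level-$1$ inequality. First I would recall the identity $I_k(\B) = -2\hat\B(\{k\})$ for increasing $\B$ (stated in the Fourier subsection), so that
\[
\sum_k I_k(\B)^2 = 4\sum_k \hat\B(\{k\})^2 = 4\,W^{=1}(\chi_{_\B}),
\]
where $W^{=1}$ denotes the weight of the Fourier expansion on singletons. Thus the theorem is equivalent to the bound $W^{=1}(\chi_{_\B}) \le 2\mu(\B)^2\ln(1/\mu(\B))$, which is exactly the (KKL/Talagrand/Chang) level-$1$ inequality specialized to a Boolean function $f=\chi_{_\B}$ with $\mathbb{E}[f]=\|f\|_1=\|f\|_2^2=\mu(\B)$; plugging these into the general statement $W^{=1}(f) \le c\,\|f\|_2^2\ln(\|f\|_2/\|f\|_1)$ — or rather the sharpened version with the explicit constant from~\cite{IMR14} — gives the claimed constant $8$ (after the factor $4$ from the influence-to-Fourier conversion).

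The main work, then, is to either cite the level-$1$ inequality in the precise form needed or to reprove it; since the paper treats Theorem~\ref{Thm:Chang's-lemma} as a known result attributed to~\cite{Talagrand96,Chang,IMR14}, I would take the citation route and simply verify that the normalization matches. Concretely, the step I would carry out carefully is: (i) invoke $I_k(\B)=-2\hat\B(\{k\})$; (ii) sum the squares to get $4W^{=1}$; (iii) apply the level-$1$ inequality in the form $W^{=1}(f)\le 2\|f\|_1^2\ln(1/\|f\|_1)$ valid for $\{0,1\}$-valued $f$ (this is the shape that yields constant $8$ upstairs); (iv) substitute $\|f\|_1=\mu(\B)$.

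If instead a self-contained proof is wanted, the standard route is via hypercontractivity: write $f_1 = \sum_k \hat f(\{k\})u_{\{k\}}$ for the degree-$1$ part, so $W^{=1}=\|f_1\|_2^2 = \langle f, f_1\rangle \le \|f\|_1\|f_1\|_\infty$ is too lossy; the better bound comes from $\langle f,f_1\rangle = \mathbb{E}[f f_1]$ and Hölder with the hypercontractive estimate $\|f_1\|_q \le (q-1)^{1/2}\|f_1\|_2$ for $q\ge 2$, optimizing $q$ as a function of $\mu(\B)$. This is the classical argument and the optimization producing the $\ln(1/\mu(\B))$ factor is routine but delicate in the constant.

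The hard part will be nothing conceptual — the statement is a packaged classical result — but rather bookkeeping of constants: making sure the factor of $2$ in $I_k=-2\hat\B(\{k\})$, the resulting $4$ out front, and the constant in the cited level-$1$ inequality combine to exactly $8$, and that the logarithm appears as $\ln$ (natural log) rather than $\log_2$. I would double-check against the reference~\cite{IMR14}, which the authors explicitly cite ``for the constant,'' to pin down that $8$ is the right value and not, say, $4$ or $16$.
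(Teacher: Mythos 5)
The paper gives no proof of this theorem---it is quoted as a known result of Talagrand and Chang, with \cite{IMR14} cited for the constant---and your verification (the conversion $\sum_k I_k(\B)^2 = 4\sum_k \hat\B(\{k\})^2$ followed by the level-$1$ inequality $W^{=1}(f)\le 2\|f\|_1^2\ln(1/\|f\|_1)$ for $\{0,1\}$-valued $f$) is exactly the right normalization check, with $4\times 2=8$ and the natural logarithm coming out correctly. One small caution: the form $W^{=1}(f)\le c\,\|f\|_2^2\ln(\|f\|_2/\|f\|_1)$ you mention in passing is the weaker Talagrand-type bound, which for $f=\chi_{_\B}$ only yields $O(\mu\ln(1/\mu))$ rather than $O(\mu^2\ln(1/\mu))$, so the sharp Chang/level-$1$ form in your step (iii) is genuinely needed, as you in fact use.
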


\mn
For $\A\sub \gO$ and $\gc>0$, write $s_\gc(\A)$ for the sum of the smallest
$\gc\log_2(1/\mu(\A))$ influences of $\A $.
(So we are now thinking of $\mu(\A)$ as somewhat small.
Of course, strictly speaking, we should say $\gc\log_2(1/\mu(\A))\in \mathbb{N}$.)
\begin{proposition}\label{Prop:Average-Chvatal-Small-Improved}
For a family $\f$ of increasing subsets of $\gO$, each of measure $t\in (0,1)$,
and $\A,\B$ drawn uniformly and independently from $\f$,
\[
\mbox{$\mathbb{E}\mathrm{Cor}(\A ,\B )
\geq (4\gc)^{-1}(2-2\sqrt{2\gc \log_2e}~) t \cdot\mathbb{E} s_\gc(\A ) .$}
\]
\end{proposition}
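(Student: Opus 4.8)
The plan is to feed Theorem~\ref{Thm:Average-Talagrand} into a pointwise lower bound that compares $\sum_k I_k(\A)I_k(\B)$ with $s_\gc(\A)$, the comparison being obtained by using Harper's inequality \eqref{THarper} and Chang's inequality (Theorem~\ref{Thm:Chang's-lemma}) to control, for the ``second'' set $\B$, the portion of its total influence carried by coordinates outside the $m$ smallest influences of the ``first'' set $\A$. Throughout write $t$ for the common measure of the members of $\f$, put $L=\log_2(1/t)$ and $m=\gc L$, which by hypothesis is a positive integer; thus for every $\A\in\f$, $s_\gc(\A)$ is the sum of the $m$ smallest among $I_1(\A),\dots,I_n(\A)$.

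First I would fix an arbitrary $\A\in\f$ and choose an index set $T=T(\A)\sub[n]$ of size $m$ realising the $m$ smallest influences of $\A$, so that $s_\gc(\A)=\sum_{j\in T}I_j(\A)$ and $I_j(\A)\le I_k(\A)$ whenever $j\in T$ and $k\notin T$. The second fact gives $s_\gc(\A)=\sum_{j\in T}I_j(\A)\le m\,I_k(\A)$ for every $k\notin T$, i.e.\ $I_k(\A)\ge s_\gc(\A)/m$ off $T$. Discarding the nonnegative terms indexed by $T$, we get, for every $\B\in\f$,
\[
\sum_k I_k(\A)I_k(\B)\ \ge\ \sum_{k\notin T}I_k(\A)I_k(\B)\ \ge\ \frac{s_\gc(\A)}{m}\sum_{k\notin T}I_k(\B)\ =\ \frac{s_\gc(\A)}{m}\left(I(\B)-\sum_{k\in T}I_k(\B)\right).
\]

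Next I would bound the last parenthesis from below, uniformly over $\B\in\f$. Harper's inequality \eqref{THarper} gives $I(\B)\ge 2tL$. For the subtracted sum, Cauchy--Schwarz followed by Theorem~\ref{Thm:Chang's-lemma} yields
\[
\sum_{k\in T}I_k(\B)\ \le\ \Big(|T|\sum_k I_k(\B)^2\Big)^{1/2}\ \le\ \big(\gc L\cdot 8t^2\ln(1/t)\big)^{1/2}\ =\ 2tL\sqrt{2\gc\ln 2},
\]
using $|T|=\gc L$ and $\ln(1/t)=L\ln 2$ in the last step. Hence $\sum_{k\notin T}I_k(\B)\ge 2tL\big(1-\sqrt{2\gc\ln 2}\big)$, and substituting this (together with $m=\gc L$) into the previous display gives $\sum_k I_k(\A)I_k(\B)\ge 2t\gc^{-1}\big(1-\sqrt{2\gc\ln 2}\big)\,s_\gc(\A)$ for all $\A,\B\in\f$. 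Taking expectations over the independent uniform choices of $\A$ and $\B$ (the right-hand side depends only on $\A$) and applying Theorem~\ref{Thm:Average-Talagrand} gives
\[
\mathbb{E}\,\mathrm{Cor}(\A,\B)\ \ge\ \tfrac14\,\mathbb{E}\sum_k I_k(\A)I_k(\B)\ \ge\ \frac{t\big(1-\sqrt{2\gc\ln 2}\big)}{2\gc}\,\mathbb{E}\,s_\gc(\A)\ =\ (4\gc)^{-1}\big(2-2\sqrt{2\gc\ln 2}\big)\,t\cdot\mathbb{E}\,s_\gc(\A),
\]
which is the proposition; in fact, since $\ln 2<\log_2 e$, the coefficient $2-2\sqrt{2\gc\ln 2}$ slightly beats the stated $2-2\sqrt{2\gc\log_2 e}$.

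The only step with any real content is the Cauchy--Schwarz/Chang estimate on $\sum_{k\in T}I_k(\B)$, and the one thing to be careful about is tracking logarithm bases so that the product $\log_2(1/t)\cdot\ln(1/t)$ collapses to $(\log_2(1/t))^2\ln 2$ and the coefficient comes out in the advertised shape; I expect this bookkeeping, rather than any conceptual difficulty, to be the ``hard'' part. The remaining ingredients --- the pointwise inequality $I_k(\A)\ge s_\gc(\A)/m$ for $k\notin T$, the harmlessness of dropping $\sum_{k\in T}I_k(\A)I_k(\B)\ge 0$, and pushing the uniform pointwise bound through the expectation --- are routine. It is worth flagging that the estimate is vacuous unless $\gc$ is small enough to make the coefficient positive, i.e.\ $\gc<(2\log_2 e)^{-1}=\tfrac12\ln 2$, which is implicit in the statement. (Note that $s_\gc(\A)\ge m\,\Imin(\A)=\gc\log_2(1/t)\,\Imin(\A)$, so this strengthens Proposition~\ref{Prop:Average-Chvatal} precisely when the bottom influences of $\A$ are not all essentially equal to the minimum.)
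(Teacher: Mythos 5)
Your proof is correct and follows essentially the same route as the paper's: Theorem~\ref{Thm:Average-Talagrand} reduces the claim to a pointwise lower bound on $\sum_k I_k(\A)I_k(\B)$, which you obtain exactly as the paper does via Chang's inequality (Theorem~\ref{Thm:Chang's-lemma}) plus Cauchy--Schwarz on the coordinates carrying the $\gc\log_2(1/t)$ smallest influences of $\A$, together with Harper's inequality for $I(\B)$; your index-set formulation merely replaces the paper's appeal to the rearrangement inequality. Your bookkeeping is in fact slightly sharper --- the constant comes out as $2-2\sqrt{2\gc\ln 2}$ rather than the stated $2-2\sqrt{2\gc\log_2 e}$ --- which, as you note, yields the proposition a fortiori.
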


\begin{proof}
By Theorem~\ref{Thm:Average-Talagrand}
it suffices to show that for increasing $\A ,\B  \sub \Omega $
with $\mu(\A )=\mu(\B )=t$,
\begin{equation}\label{Eq:Average1}
\mbox{$\sum  I_k(\A ) I_k(\B ) \geq  \gc^{-1}(2-2\sqrt{2\gc \log_2e}~)t \cdot s_\gc(\A );$}
\end{equation}
this is seen as follows.
We may assume that $I_1(\A ) \leq \cdots \leq I_n(\A )$ and then,
since $\B$ appears only on the l.h.s. of \eqref{Eq:Average1}, that
$I_1(\B ) \geq \cdots \geq I_n(\B )$
(by the ``Rearrangement Inequality,"
e.g. \cite[Theorem~368]{HLP})).

Set $q= \gamma \log_2(1/t)$.
Using Theorem~\ref{Thm:Chang's-lemma} and Cauchy-Schwarz we have
\[
\mbox{$8t^2\ln(1/t) \geq \sum_{k\leq q}I_k(\B)^2 \geq (1/q)\left(\sum_{k\leq q}I_k(\B)\right)^2,$}
\]
implying
$
\mbox{$\sum_{k\leq q}I_k(\B) \leq t\sqrt{8q\ln(1/t)} = \ga t\log_2(1/t)$},
$
with $\ga = 2\sqrt{2\gamma \log_2e}$,
and, by \eqref{THarper},
\[
\mbox{$\sum_{k> q}I_k(\B) \geq  (2-\alpha) t\log_2(1/t)$.}
\]
But then
\[
\mbox{$\sum I_k(\A)I_k(\B) ~\geq~ I_q(\A)\sum_{k>q}I_k(\B)
~\geq~
(s_\gc(\A)/q)(2-\alpha) t \log_2(1/t)
~=~ \gc^{-1}(2-\ga)t \cdot s_\gc(\A ) .$}
\]
\end{proof}

\end {document}